\documentclass{article}

\usepackage{textcomp}
\usepackage[utf8]{inputenc}
\usepackage[T1]{fontenc}

\usepackage[numbers,sort&compress]{natbib}

\usepackage{hyperref}
\usepackage[dvipsnames]{xcolor}
\hypersetup{
    colorlinks,
    linkcolor={red!60!black},
    citecolor={green!40!black}
}

\usepackage{amssymb}
\usepackage{amsmath}
\usepackage{amsthm}
\usepackage{dsfont}

\newtheorem{lemma}{Lemma}
\newtheorem{corollary}{Corollary}

\usepackage{booktabs}
\usepackage{siunitx}
\usepackage{multirow}

\usepackage{caption}
\usepackage{subcaption}
\usepackage[capitalize,noabbrev,nameinlink]{cleveref}
\usepackage{orcidlink}

\usepackage{chngcntr}
\usepackage{apptools}
\usepackage{mathtools}
\usepackage{algorithm,algorithmic}
\usepackage{stmaryrd}
\usepackage{booktabs}
\usepackage{color}
\usepackage{nicefrac}
\usepackage{multirow}
\usepackage{lipsum}

\usepackage{thmtools}
\usepackage{thm-restate}

\DeclareMathOperator*{\argmax}{\mathrm{arg\,max}}

\DeclareMathOperator{\Tr}{Tr}
\DeclareMathOperator{\rank}{rank}
\DeclareMathOperator{\diag}{diag}
\DeclareMathOperator{\Diag}{Diag}

\usepackage{bm}

\newcommand{\innp}[2]{\left\langle #1, #2 \right\rangle}
\newcommand{\norm}[1]{\left\| #1 \right\|}
\newcommand{\vx}{\mathbf{x}}
\newcommand{\vvv}{\mathbf{v}}

\newcommand{\vy}{\mathbf{y}}

\newcommand{\vl}{\mathbf{l}}
\newcommand{\vu}{\mathbf{u}}
\newcommand{\vz}{\mathbf{z}}
\newcommand{\vw}{\mathbf{w}}
\newcommand{\vb}{\mathbf{b}}
\newcommand{\va}{\mathbf{a}}
\newcommand{\vm}{\mathbf{m}}
\newcommand{\valpha}{\bm{\alpha}}
\newcommand{\vbeta}{\bm{\beta}}
\newcommand{\R}{ {\mathbb R} } % defines a short cut for the symbol of real numbers
\newcommand{\Z}{ {\mathbb Z} } % define symbol integers
\newcommand{\N}{ {\mathbb N} } % define symbol naturals

\newcommand{\SymMat}{\mathcal{S}}

\usepackage{xspace}
\newcommand{\Xset}{{\ensuremath{\mathcal{X}}}\xspace}

\begin{document}

\begin{center}
 {\Large Discrete eigenvalue optimization from entropic smoothing\\ and first-order methods}
\end{center}

\vspace{7mm}

\noindent\textbf{Deborah Hendrych}\orcidlink{0000-0003-0705-1356}\hfill\href{mailto:hendrych@zib.de}{\ttfamily hendrych@zib.de}\\
\emph{\small Interactive Optimization \& Learning\\
Zuse Institute Berlin \& Technische Universit\"at Berlin \\
Berlin, Germany}\\
\\
\textbf{Mathieu Besançon}\orcidlink{0000-0002-6284-3033}\hfill\href{mailto:mathieu.besancon@inria.fr}{\ttfamily mathieu.besancon@inria.fr}\\
\emph{\small Univ.~Grenoble Alpes, Inria, Laboratoire d'Informatique de Grenoble \& Zuse Institute Berlin \\
Grenoble, France}\\
\\
\textbf{Sebastian Pokutta}\orcidlink{0000-0001-7365-3000} \hfill\href{mailto:pokutta@zib.de}{\ttfamily pokutta@zib.de}\\
\emph{\small Interactive Optimization \& Learning \\
Zuse Institute Berlin \& Technische Universit\"at Berlin \\
Berlin, Germany}\\
\\

\vspace{5mm}

\begin{center}
\begin{minipage}{0.85\textwidth}
\begin{center}
 \textbf{Abstract}
\end{center}
We study the maximization of the minimum eigenvalue under combinatorial and integrality constraints. We propose a new approach based on branch-and-bound combines entropic smoothing of the minimum eigenvalue function and
Frank-Wolfe methods over concave relaxations of the constraints, thereby exploiting combinatorial structure through linear optimization oracles. 
We establish approximation and convergence guarantees, including for truncated gradients computed from partial eigendecompositions, and introduce rank- and eigenvalue-based pruning and duality-based variable fixing. 
We evaluate the method on E-optimal experimental design and maximum algebraic connectivity problems and compare it with SCIP-SDP. 
The results show that our approach is particularly effective for large-dimensional instances and problems with additional combinatorial structure, 
whereas SCIP-SDP performs better on moderately sized instances with simpler constraints.

\medskip
\noindent\textbf{Keywords:} mixed-integer convex optimization, minimum eigenvalue maximization, first-order methods, entropic smoothing, combinatorial optimization
\end{minipage}
\end{center}

\vspace{0mm}

\section{Introduction} \label{sec:introduction}

We are interested in in solving problems of the following structure:
\begin{align}
    \max \; &\lambda_{\min}(M(\vx)) \label{eq:MaxMinEigenvalueProblem} \\
    \text{s.t. } & \vx \in \mathcal{X} \nonumber \\
     \;& \vx \in \{0,1\}^m. \nonumber 
\end{align}
where $M:\R^m \to \SymMat_n$ is a linear map of the form
\begin{align}
    M(\vx) = C + \sum_{i=1}^m x_i \vm_i \vm_i^\intercal
\end{align}
with $C \in \SymMat_n^+$ and $\vm_i \in \R^n$.
The constraint set $\mathcal{X}$ is assumed to be polyhedral and to encode a combinatorial structure.
We will only consider binary problems in this paper but many of the following discussions can be extended to the general integer case.

A wide range of problems can be cast in this form, such as the E-Optimal Experiment Design problem or maximizing the algebraic connectivity of the network.
The minimum eigenvalue function is concave but not smooth. 
It can be naturally formulated as an \emph{mixed-integer semi-definite programming (MISDP)} problem:
\begin{align}
    \max \; &\zeta \tag{S} \label{eq:E-SDPFormulation} \\
    \text{s.t. } &  M(\vx) -\zeta I \succeq 0 \nonumber \\
     \; & \vx \in \mathcal{X}\nonumber \\
     \; &\vx \in \{0,1\}^m .\nonumber 
\end{align}
As the problem dimension increases, solving the MISDP formulation becomes computationally expensive. 
On the other hand, combinatorial structures in $\mathcal{X}$ may not be exploited as efficiently by MISDP solvers.
\emph{Frank-Wolfe (FW)} type algorithms are a natural fit for concave optimization over sets which allow for efficient linear maximization.
This motivates us to tackle Problem \ref{eq:MaxMinEigenvalueProblem} using FW type algorithms, in particular, a branch-and-bound framework utilizing
FW methods to solve concave relaxations of the problem.

The Frank-Wolfe algorithm is a projection-free first-order method for smooth constrained optimization,
introduced independently in \citet{frank1956algorithm} and \citet{levitin1966constrained}. 
It has gained traction again in the last decade due to its simplicity and scalability stemming from inexpensive iterations,
and the fact that it induces sparse iterates, i.e., iterates constructed from the convex combination of a low number of extreme points \citep{bomze2020active,wirth2024pivotingframeworkfrankwolfealgorithms},
properties that have driven its recent success in machine learning applications.
Instead of projections, FW solves linear maximization problems over the constraint set, obtaining extreme points of the feasible region and forming solutions as convex combinations of these points.
As a by-product of iterations, the algorithm provides a \emph{Frank-Wolfe gap} which upper-bounds the optimality gap at any iteration.
These properties make FW also interesting for a branch-and-bound (B\&B) framework for mixed-integer optimization, as presented in \citet{hendrych2023convex}, where a generic mixed-integer convex solving framework is proposed. 

Common Frank-Wolfe methods typically require an objective that is differentiable and smooth, either in the Lipschitz sense or with other Lipschitz-like smoothness conditions such as relative smoothness \citep{vyguzov2025adaptive}
or self-concordance \citep{dvurechensky2023generalized,carderera2021simple,carderera2024scalable} .
Most variants may fail to converge when the function is only subdifferentiable, see the counterexample in \citet{nesterov2018complexity}.
When the nonsmooth function is piecewise linear, the fully-corrective Frank-Wolfe can be shown to terminate in finite time \citep{bettiol2023oracle} but requires the exact optimization over subproblems growing in size with iterations.
A more versatile alternative is smoothing the objective function using the Nesterov smoothing technique \citep{nesterov2005smooth}, which has already been leveraged in FW variants when the objective is piecewise linear \citep{yurtsever2018conditional,besancon2024frankwolfealgorithmoraclebasedrobust}.
We use the later-proposed log-sum-exp smoothing of the eigenvalues for some form of spectral functions \citep{nesterov2007smoothing}, which is more suitable than the quadratic regularization smoothing for spectral objectives.
The original paper proposed this log-sum-exp smoothing of the maximum eigenvalue to take into account all eigenvalues with an adaptive filtering mechanism based on exponential decay,
in contrast to the spectral bundle method, which uses a fixed number of eigenvectors to construct a lower-bounding model.
Log-sum-exp smoothing for semi-definite optimization was shown in \citet{d2008smooth} to be robust to dropping terms corresponding to the largest eigenvalues in the computation of the gradient, offering the possibility for efficient computations of the few smallest eigenpairs to estimate the gradient without compromising the rate of convergence.
Recently, the log-sum-exp smoothing was shown in \citet{samakhoana2025elementary} to be close to optimal for the coordinate-wise maximum of a vector, a special case of our setting.

%In addition to the combinatorial difficulty, one aspect of the problem is the non-differentiability and non-smoothness of the objective function,
%which is a property most standard FW variants rely upon.
%Indeed, one can construct instances on which a broad class of algorithms relying on calls to linear minimization oracles may fail to find the optimum \cite%{nesterov2018complexity}.
%When the objective function is piecewise linear and convex, a dedicated fully-corrective Frank-Wolfe method can exploit
%linear optimization subproblems \citet{bettiol2023oracle}.
%We exploit an approach based on a smoothing scheme introduced in \citet{nesterov2007smoothing} for spectral functions specifically,
%and which relies on a log-sum-exp regularization.
%This technique is typically more suitable to spectral functions than the classical Nesterov smoothing based on a quadratic regularization of the dual \citet{nesterov2005smooth},
%already used in FW schemes, e.g., in \citet{yurtsever2018conditional,besancon2024frankwolfealgorithmoraclebasedrobust}.

There are two instances of Problem~\ref{eq:MaxMinEigenvalueProblem} we will highlight.
The Optimal Design of Experiments Problem (OEDP) consist in gaining the most information on a system from a limited number of experiments that need selecting.
Depending on the measure used for information, different formulations of the problem arise.
The \emph{E-Optimal Experiment Design (EOD)} fits into the general framework of Problem~\ref{eq:MaxMinEigenvalueProblem} with $\vm_i=\va_i$ encoding the experiment data,
$C=0$ and $\mathcal{X} =\{\vx \in \R^m_{\geq 0} \mid \sum_{i=1}^m x_i = N\}$ where $N$ is the budget on the number of experiments to be selected. 
So $\mathcal{X}$ is a scaled probability simplex for which linear maximization can be efficiently performed.
If the constant matrix $C$ is not the zero matrix, but itself a positive semi-definite matrix, the problem is known as the \emph{Bayesian E-Optimal Design (BEOD)} problem, or alternatively the \emph{E-Fusion Design} problem.
The aim is to maximize information gain in the presence of prior information, for example, in sensing applications,
with sensors already deployed and new sensors to be added.

In graph theory, the \emph{algebraic connectivity} of a graph $G=(V, E)$ is defined as $\lambda_2(L)$,
i.e., the second smallest eigenvalue of the Laplacian matrix $L$ of the graph and measures how well-connected the graph is.
The graph is disconnected if and only if $\lambda_2(L) = 0$.
The corresponding optimization problem is to maximize the algebraic connectivity of a given graph under some constraints.
For example, given a base network, a set of potential new edges and a budget, one would like to add a new edges to the network to maximize the algebraic connectivity.
A higher algebraic connectivity is desirable for robustness of the network.
By \citet{chan2018complexity,lamperski2024simple}, the algebraic connectivity can also be computed from the smallest eigenvalue of the shifted Laplacian matrix, so
\begin{align*}
\lambda_2(L) = \lambda_{\min}(L + \mathbf{1} \mathbf{1}^\intercal).
\end{align*}
The vector $\vm_i$ now represents a potential edge $jk$, with $m_i^j = 1$ and $m_i^k = -1$.
The constant matrix $C$ is the Laplacian matrix of the base graph.
Observe that for a connected graph, maximizing the algebraic connectivity under budget constraints is a special case of the Bayesian E-Optimal Design problem.
In \citet{mosk2008maximum}, it shown that the maximum algebraic connectivity augmentation problem is $NP$-hard,
also implying that the Bayesian E-Optimal Design problem is $NP$-hard.

The paper is structured as follows.
In \cref{sec:smoothing_minimum_eigenvalue}, we introduce the smoothing technique for the minimum eigenvalue function and establish its convergence guarantees in a Frank-Wolfe algorithm.
\cref{sec:algorithmic_techniques} introduces two pruning techniques for the branch-and-bound tree based on 
the maximal achievable rank and eigenvalue of the current feasible solution set.
In the discrete setting, the smoothed problem yields the same optimal solution as the original problem, 
given a small enough smoothing parameter.
The discussion for this can be found in \cref{sec:analysis_integral_instances}.
Aside from pruning, we also investigate possible fixing strategies from the dual problem, see \cref{sec:fixings_from_dual_problem}.
Lastly, we test our approach on instances of the E-Optimal Experiment Design problem and two variants of  Algebraic Connectivity problems, see \cref{sec:computational_experiments}.
Most proofs and supporting lemmas are deferred to \cref{app:proofs} and additional computational results are provided in \cref{app:comp_appendix}.

\paragraph{Contributions}

Our contributions are the following. We propose a mixed-integer concave optimization approach for the constrained eigenvalue maximization problem,
combining entropic smoothing with a Frank-Wolfe approach to solve continuous concave relaxations that are then combined in a branch-and-bound algorithm.
We study the convergence of our algorithm on these continuous relaxations and propose several techniques to accelerate the search,
based on the rank structure of the objective, on specific properties of integral instances, and on duality, indicating experiments which can be safely pruned. 
We provide extensive computational experiments on E-optimal design and algebraic connectivity instances.

\paragraph{Related work}
The E-Optimal Design problem has not been studied so extensively as its D-Optimal and A-Optimal counterparts, and most of the literature is considering a continuous counterpart of the problem.
Related spectral subset-selection problems include constrained generalized maximum-entropy sampling, which maximizes the product of the largest eigenvalues of a selected principal covariance submatrix and generalizes binary D-optimality \citep{ponte2026extended}.
Although this is not an E-optimal formulation, its extended-variable relaxations, scaling techniques, and branch-and-bound methodology are relevant to discrete spectral design.
A recent work by \citet{shi2026gradient} uses Wasserstein geometry to define optimal descent directions even in the case of higher multiplicity
of the minimum eigenvalue, the computation of which relies again on solving SDP problems.
Both \citet{sahu2021new} and \citet{rendi1995max} have developed primal-dual type algorithms to tackle maximization of the minimum eigenvalue.
In \citet{telen2015differentiable}, a formulation was developed for dynamic programming replacing the SDP constraint from \eqref{eq:E-SDPFormulation} 
with a set of nonlinear inequalities based on the Sylvester criterion for positive definiteness.
While we consider only a linear experiment model, in \citet{dette2015optimal}, (fractional) E-Optimal design problems are considered for second-order response surface models.
Approximation algorithms are designed for the A- and E-optimal design problems in \citet{chamon2017approximate,brown2024fast,lamperski2024simple}, based on a greedy algorithm or a simple rounding scheme.
The work of \citet{lamperski2024simple} also considers the Algebraic Connectivity problem in its minimum eigenvalue formulation.
The authors of \citet{farhadi2020lambda} show that the dual problem of maximizing the second smallest eigenvalue, the Maximum Variance Embedding problem, is $NP$-hard.
Both \citet{kolla2010subgraph} and \citet{wei2014algebraic} try to approximate the integer optimum, the former introducing a new
subgraph sparsification technique, the latter comparing a greedy algorithm, an algorithm specialized for adding/deleting edges, and 
a rounding heuristic based on the continuous SDP relaxation.
Finally, we will highlight a previous approach similar in spirit from the solver BiqCrunch \citep{krislock2017biqcrunch} which tackles pure-binary quadratic problems through branch-and-bound leveraging a smoothed dual SDP at every node.

\subsection*{Notation}
The set of integers is denoted by $\Z$ and the natural numbers without zero is denoted by $\N$.
We denote with $\SymMat_n \subset \R^{n \times n}$ the set of real symmetric matrices, $\SymMat_n^+$ denotes the set of positive 
semi-definite matrices, and $\SymMat_n^{++}$ denotes the set of positive definite matrices.
The diagonal vector of a matrix $A$ is denoted as $\diag(A)$ and the diagonal matrix of a vector $\vx$ is denoted as $\Diag(\vx)$.
Matrices and sets are denoted in capital letters, vectors in lowercase and boldface letters, and scalars in lowercase.
The rows of a matrix $A$ are denoted as $A_{i,\cdot}$ and the columns as $A_{\cdot,j}$.
For a natural number $k$, we use the shorthand $[k]$ for $\{1,\ldots k\}$.
We denote by $\norm{X}_{\mathrm{op}}$ the operator norm of a matrix, by $\norm{X}_*$ its trace norm and by $\norm{X}_2$ its spectral norm.
For the eigenvalues of a matrix $X \in \SymMat^n$, we use the notation $\lambda_1(X) \leq \lambda_2(X) \leq \cdots \leq \lambda_n(X)$ to 
denote the eigenvalues of $X$ in non-decreasing order with $\lambda_1(X) = \lambda_{\min}(X)$ and $\lambda_n(X) = \lambda_{\max}(X)$.

\section{Smoothing the minimum eigenvalue}\label{sec:smoothing_minimum_eigenvalue}

As stated in the introduction, Frank-Wolfe requires a concave and differentiable objective.
If the objective is $L$-smooth, the classical convergence guarantees hold \citep{jaggi2013revisiting,braun2022conditional}. 
While the minimum eigenvalue function is concave, if the eigenvalue has a multiplicity $> 1$ for a given point, it is not differentiable there.
Thus, the function has to be smoothed to make FW applicable.
We will utilize the technique from Nesterov in \citet{nesterov2007smoothing} and consider first the continuous version of problem \ref{eq:MaxMinEigenvalueProblem}.
\begin{align}
    \label{eq:EOptGeneral}
 \max_{\vx} & \; \lambda_{\min}\left(M(\vx)\right)\\
 \text{s.t. }& \; \vx \in \mathcal{X} \nonumber 
\end{align}

Let us define the function $f(M(\vx)) = \lambda_{\min}(M(\vx))$ and use the Nesterov smoothing on $f$:
\begin{equation}
    \label{eq:SmoothingMinEigMax}
    f_{\mu}(M(\vx)) = -\mu E\left(-1/\mu M(\vx)\right) + \mu \log n  = -\mu \log\left(\sum_{i=1}^n e^{-\lambda_i(M(\vx))/\mu}\right) + \mu \log n 
\end{equation}
with $\mu > 0$ being the smoothing parameter.
For convenience, let $F(-1/\mu M(\vx)) = \sum_{i=1}^n e^{-\lambda_i(M(\vx))/\mu}$. 
For all $\vx\in\mathcal{X}$ and $\mu>0$, the smoothed objective satisfies
\begin{align}
    f(M(\vx)) \leq f_\mu(M(\vx)) \leq f(M(\vx))+\mu\log n.
    \label{eq:smoothing_sandwich}
\end{align}

Now that we have established our auxiliary objective, we prove guarantees for the primal gap of
the original problem given our known guarantees on the smoothed objective.
\begin{restatable}{theorem}{thmFrankWolfeConvergence}
\label{th:FrankWolfeConvergence}
Let $\epsilon > 0$ and $\mu = \frac{\epsilon}{2\log n}$ and $g(\vx)=\lambda_{\min}(M(\vx))$.
We denote $\norm{M}_\mathrm{op} = \max\limits_{\norm{\vx}=1} \norm{M(\vx)}_2$.
The function $g_\mu(\vx) = f_{\mu}(M(\vx))$ is $L$-smooth with Lipschitz constant $L = \norm{M}_\mathrm{op}^2 /\mu$.
Denote $\mathcal{X} \subseteq \left[0,1\right]^m$ the compact feasible set of the relaxation of diameter $D$ and $\vx_t \in \mathcal{X}$ the $t$-th Frank-Wolfe iterate of the smoothed problem and by $g^*$ the optimal value of the original problem.
Then,
\begin{align}
    g^*  - g(\vx_t) \leq \norm{M}_\mathrm{op} D \sqrt{\frac{8 \log n}{t}}.
\end{align}
If furthermore, $\Xset$ is the hypersimplex $\{\vx \in \left[0,1\right]^m, \sum_i x_i = N\}$,
then the bound becomes
\begin{align*}
    g^*  - g(\vx_t)\leq 4 \norm{M}_\mathrm{op} \sqrt{\frac{N \log n }{t}}.
\end{align*}

\end{restatable}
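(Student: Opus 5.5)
The argument has three parts: a smoothness bound for $g_\mu$, the standard Frank-Wolfe rate on the smoothed problem, and a transfer back to $g$ through the sandwich \eqref{eq:smoothing_sandwich}.

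\emph{Smoothness of $g_\mu$.} First we establish $L$-smoothness with $L=\norm{M}_\mathrm{op}^2/\mu$. By \citet{nesterov2007smoothing}, the spectral log-sum-exp $E(X)=\log\sum_i e^{\lambda_i(X)}$ has gradient $\nabla E(X)=e^{X}/\Tr(e^X)$. This gradient is $1$-Lipschitz from the spectral norm to the trace norm, i.e.
\[
\norm{\nabla E(X)-\nabla E(Y)}_*\le\norm{X-Y}_2 .
\]
Scaling the argument by $-1/\mu$ and the function by $-\mu$ shows that $f_\mu$ has a $(1/\mu)$-Lipschitz gradient in the same norm pair. We then compose with the linear part of $M$. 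Write $\mathcal{A}(\vx)=\sum_i x_i\vm_i\vm_i^\intercal$; the affine shift $C$ does not affect smoothness. The chain rule gives $\nabla g_\mu(\vx)=\mathcal{A}^*\nabla f_\mu(M(\vx))$. Hence
\[
\norm{\nabla g_\mu(\vx)-\nabla g_\mu(\vy)} \le \norm{\mathcal{A}^*}\,\frac{1}{\mu}\,\norm{\mathcal{A}(\vx-\vy)}_2 \le \frac{\norm{M}_\mathrm{op}^2}{\mu}\norm{\vx-\vy},
\]
using that the adjoint norm (trace norm to Euclidean) equals $\norm{M}_\mathrm{op}$ by duality. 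The operator norm is interpreted as acting on $\mathcal{A}$.

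\emph{Frank-Wolfe rate and transfer.} The standard Frank-Wolfe guarantee \citep{jaggi2013revisiting} for a concave $L$-smooth objective on a set of diameter $D$ gives
\[
g_\mu^*-g_\mu(\vx_t)\le \frac{2LD^2}{t+2},
\]
where $g_\mu^*=\max_{\Xset} g_\mu$. By \eqref{eq:smoothing_sandwich} we have $g\le g_\mu\le g+\mu\log n$. Therefore $g^*\le g_\mu^*$ and $g(\vx_t)\ge g_\mu(\vx_t)-\mu\log n$, so
\[
g^*-g(\vx_t)\le \frac{2\norm{M}_\mathrm{op}^2D^2}{\mu t}+\mu\log n .
\]

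\emph{Choosing $\mu$.} The $\epsilon$ in the statement is set as a function of $t$ to balance the two terms. Minimizing $a/\mu+\mu b$ with $a=2\norm{M}_\mathrm{op}^2D^2/t$ and $b=\log n$ gives $2\sqrt{ab}=\norm{M}_\mathrm{op}D\sqrt{8\log n/t}$. This is exactly the claimed bound, reached at $\mu=\sqrt{a/b}$, which corresponds to $\epsilon=2\mu\log n$. I would state explicitly that $\epsilon$ (equivalently $\mu$) is tuned to the horizon $t$.

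\emph{Hypersimplex case.} For the hypersimplex it remains to bound $D^2$. Vertices are $0/1$ vectors with exactly $N$ ones, so any two differ in at most $2N$ coordinates and $D^2\le 2N$. Substituting gives $\norm{M}_\mathrm{op}\sqrt{16N\log n/t}=4\norm{M}_\mathrm{op}\sqrt{N\log n/t}$.

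\emph{Main obstacle.} The hardest step is the first one. One has to verify carefully the spectral-norm/trace-norm Lipschitz property of $\nabla E$, which is nontrivial and relies on Nesterov's analysis of the matrix exponential. One also has to track the norms so that the constant comes out as $\norm{M}_\mathrm{op}^2$ and not something dimension-dependent. The remaining steps are routine algebra.
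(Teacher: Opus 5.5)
Your proposal is correct and follows essentially the same route as the paper: the standard Frank-Wolfe rate for the $\norm{M}_\mathrm{op}^2/\mu$-smooth objective $g_\mu$, the sandwich $g \le g_\mu \le g + \mu\log n$ to transfer the gap to $g$, a choice of $\epsilon$ (hence $\mu$) that makes both error terms equal to $\epsilon/2$, and $D^2 \le 2N$ for the hypersimplex. The differences are only in how much is made explicit: you derive the composite smoothness constant via the spectral-to-trace-norm chain rule, whereas the paper cites $1/\mu$-smoothness of $f_\mu$ from Nesterov and asserts the rest, and you state outright that $\epsilon$ must be tuned to $t$, which the paper does only implicitly.
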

Thus, Frank-Wolfe will yield a better solution with a smaller $\mu$ but potentially at the cost of numerical stability.
We note, however, that the $\mu$ defined by the above theorem is too small in practice, and will lead to numerical instability.
Ultimately, we are interested in an integral solution to our problem and we will be using Frank-Wolfe as the node solver
in a branch-and-bound scheme.
If $\mu$ is too small, the node evaluation becomes costly from the Lipschitz smoothness constant increase; if it is too large, the node-relaxation upper bound in the B\&B tree will be too loose.
Combining the best of both worlds, we will use a dynamic rule for $\mu$ by starting with a relatively large value and decreasing it
depending on the depth of the tree until a specified limit. 
At nodes with higher depths, many variables will already be fixed and the dimension of the subproblem is thereby reduced.

The gradient of the smoothed function $f_\mu$ is given in the following lemma.
\begin{restatable}{lemma}{lemmagradient}\label{lemma:gradient}
    Let $\{\lambda_i\}_{i\in[n]}$ be the eigenvalues of $M(\vx)$, with associated eigenvectors
    $\{\vu_i\}_{i\in[n]}$, then the entries of the gradient of $g_{\mu}$ are given by
    \begin{align*}
    \left[\nabla g_{\mu}(\vx)\right]_k = \sum_{i=1}^n \frac{e^{ -\lambda_i / \mu }}{\sum_{j=1}^n e^{ -\lambda_j / \mu }}
    \innp{\vu_i}{\vm_k}^2 .
    \end{align*}
\end{restatable}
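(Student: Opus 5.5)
The plan is to write $g_\mu = h\circ M$ with $h(X) = -\mu\log\Tr\exp(-X/\mu) + \mu\log n$, compute the matrix gradient $\nabla h(X)$, and then pull it back through the affine map $M$ by the chain rule. Since $M(\vx) = C + \sum_k x_k \vm_k\vm_k^\intercal$ is affine in $\vx$, we have $\partial M(\vx)/\partial x_k = \vm_k\vm_k^\intercal$. Hence
\begin{align*}
[\nabla g_\mu(\vx)]_k = \innp{\nabla h(M(\vx))}{\vm_k\vm_k^\intercal} = \vm_k^\intercal \nabla h(M(\vx))\, \vm_k .
\end{align*}

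The key step is the gradient of the trace-exponential. I would invoke the standard fact that for an analytic scalar function $\phi$, the spectral function $X\mapsto \Tr\phi(X)$ is differentiable with gradient $\phi'(X)$. This holds even when eigenvalues are repeated, because the trace of a matrix function is smooth. One way to see it is through the power series: $\frac{d}{dt}\Tr (X+tH)^p|_{t=0} = p\Tr(X^{p-1}H)$ by cyclicity of the trace, and the series for $\exp$ converges uniformly on bounded sets. Applying this with $\phi(s) = e^{-s/\mu}$ gives $\nabla_X \Tr\exp(-X/\mu) = -\tfrac{1}{\mu}\exp(-X/\mu)$. The chain rule through the logarithm then yields
\begin{align*}
\nabla h(X) = \frac{\exp(-X/\mu)}{\Tr\exp(-X/\mu)}.
\end{align*}
The factors $-\mu$ and $-1/\mu$ cancel, and the constant $\mu\log n$ drops out.

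Finally, I would insert the eigendecomposition $M(\vx) = \sum_i \lambda_i \vu_i\vu_i^\intercal$. This gives $\exp(-M(\vx)/\mu) = \sum_i e^{-\lambda_i/\mu}\vu_i\vu_i^\intercal$ and $\Tr\exp(-M(\vx)/\mu) = \sum_j e^{-\lambda_j/\mu}$. Taking the quadratic form with $\vm_k$, and using $\vm_k^\intercal \vu_i\vu_i^\intercal \vm_k = \innp{\vu_i}{\vm_k}^2$, produces exactly the claimed formula.

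The only delicate point I expect is justifying differentiability of $\Tr\exp$ without assuming distinct eigenvalues, since individual eigenvalues are not differentiable there. The power-series/cyclicity argument handles this cleanly. I would also note that the resulting expression does not depend on the choice of orthonormal eigenbasis within an eigenspace, since it equals $\vm_k^\intercal \exp(-M/\mu)\vm_k$ divided by a basis-independent trace.
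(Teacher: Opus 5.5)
Your proof is correct, and it reaches the matrix gradient $\nabla f_\mu(X) = \exp(-X/\mu)/\Tr\exp(-X/\mu)$ by a different route than the paper.

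The paper argues eigenvalue by eigenvalue. It uses $\partial\lambda_i(X)/\partial X = \vu_i\vu_i^\intercal$ with the chain rule, and then appeals to Lewis's theorem on spectral functions: a differentiable symmetric function $F$ of the eigenvalues gives a differentiable $F\circ\lambda$, with gradient $U\Diag(\nabla F(\lambda(X)))U^\intercal$. Read literally, the per-eigenvalue formula only holds at simple eigenvalues. At points with repeated eigenvalues, it is the citation that carries the argument.

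You never differentiate an individual eigenvalue. You differentiate $\Tr\exp(-X/\mu)$ directly through its power series, using $\frac{d}{dt}\Tr(X+tH)^p|_{t=0} = p\Tr(X^{p-1}H)$. You insert the eigendecomposition only at the end, to read off the coefficients.

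What your route buys:
\begin{itemize}
\item It is self-contained.
\item It handles eigenvalue multiplicity transparently.
\item It makes the independence of the formula from the choice of eigenbasis evident.
\end{itemize}
The price is that it relies on $s \mapsto e^{-s/\mu}$ being entire, so the series converges uniformly on bounded sets. Lewis's theorem covers any differentiable symmetric function of the spectrum.

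The final step is identical in both arguments: pulling back through the affine map $M$ using $\partial M/\partial x_k = \vm_k\vm_k^\intercal$.
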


Given that we are in the integer setting, a natural question is whether we can find a $\mu$ such that the corresponding optimization
problem has the same solution as the original problem.
First, we require a bound relating $f(M(\vx_\mu^*))$ and $f(M(\vx^*))$ where $\vx_\mu^*$ and $\vx^*$ are optimizer of the smoothed and original problem,
respectively.
By \cref{eq:smoothing_sandwich}, we have
\begin{equation}
    \label{eq:f_mu_bound}
    f_{\mu}(M(\vx)) \geq f(M(\vx)) \geq f_{\mu}(M(\vx))-\mu\log n.
\end{equation}
$\vx_\mu^*$ being an optimizer implies that 
\begin{align}
    f_{\mu}(M(\vx_{\mu}^*)) &\geq f_{\mu}(M(\vx)) \quad \forall \vx \in \mathcal{X} \label{eq:f_mu_opt}\\
\intertext{By optimality of $\vx^*$, we have }
f(M(\vx^*)) &\geq f(M(\vx_{\mu}^*)). \label{eq:f_opt_not} \\
\intertext{Combining the previous three inequalities, we get}
 f_{\mu}(M(\vx_{\mu}^*)) &\underset{\text{\ref{eq:f_mu_opt}}}{\geq} f_{\mu}(M(\vx^*))
    \underset{\text{\ref{eq:f_mu_bound}}}{\geq} f(M(\vx^*))
    \underset{\text{\ref{eq:f_opt_not}}}{\geq} f(M(\vx_{\mu}^*))
    \underset{\text{\ref{eq:f_mu_bound}}}{\geq} f_{\mu}(M(\vx_{\mu}^*))-\mu\log n.
    \label{eq:f_mu_bound_opt}
\end{align}
From (\ref{eq:f_mu_bound_opt}), we can conclude that
\[f(M(\vx^*)) - f(M(\vx_{\mu}^*)) \leq \mu \log n .\]
On the other hand, the set of feasible and non-optimizer points $\mathcal{Y}$ is finite by integrality constraints and the compact feasible region.
Therefore, we can find some $\delta > 0$ such that 
\[f(M(\vx^*)) - f(M(\vy)) \geq \delta \quad \forall \vy \in \mathcal{Y}.\]
So for \[\mu < \frac{\delta}{\log n}\] the integer optimizer of the smoothed problem has to be the same as the integer optimizer of the original problem.

Computing $\delta$ for a given feasible set and map $M$ is non-trivial and even then,
it will typically be too small, and so will the corresponding $\mu$, making it impractical for computations.

The tightness of the relaxation directly depends on the smoothing parameter $\mu$.
Thus, we investigated if a tighter relaxation can be obtained from the original function.
For D- and A-optimal design, several lines of work developed tighter relaxations transforming the objective function, see e.g.,
\citet{li2025augmented,ponte2024admm} for D-optimal and \citet{li2025strong} for A-optimal design.
In contrast, the E-optimal design of experiment captures most semi-definite optimization problems of interest as shown in the following \cref{prop:eoptgeneric}, leaving little hope for a tighter factorization.
This reformulation is standard and has motivated, e.g., the development of spectral bundle methods for SDP \citep{helmberg2000spectral}.

\begin{restatable}{proposition}{propeoptgeneric}
\label{prop:eoptgeneric}
Given symmetric matrices $C$, $\{A_i\}_{i\in[m]}$ and vector $\vb$,
consider the following semi-definite optimization template in the primal form:
\begin{align}\label{prob:primalSDP}
\min_{Y} \; & \innp{C}{Y} \\
\mathrm{s.t.} \; & \innp{A_i}{Y} = b_i \;\;\forall i \in [m] \nonumber \\
& Y \in \SymMat_+^n.\nonumber
\end{align}
Assume \eqref{prob:primalSDP} is strictly feasible, i.e., that there exists $Y \succ 0$ such that $\innp{A_i}{Y} = b_i \forall i \in [m]$,
and that there exists $\tau \in \R$ such that $\mathrm{Tr}(Y) \leq \tau$ is a valid inequality of the problem, i.e.~that it preserves at least one optimal solution.
Then solving \eqref{prob:primalSDP} amounts to solving an instance of \eqref{eq:EOptGeneral}.
\end{restatable}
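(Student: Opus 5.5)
Our plan follows the standard route used for spectral bundle methods: pass to the dual, use the trace bound to turn the dual into an eigenvalue maximization, and then read that eigenvalue problem as an instance of \eqref{eq:EOptGeneral}. First, we add the valid inequality $\Tr(Y) \leq \tau$ to \eqref{prob:primalSDP}. Since it preserves an optimal solution, the optimal value is unchanged. We then take the Lagrangian dual of the augmented problem:
\begin{align*}
\max_{\vy \in \R^m,\, s \geq 0} \; \vb^\intercal \vy - \tau s \quad \mathrm{s.t.} \quad C - \sum_{i=1}^m y_i A_i + s I \succeq 0 .
\end{align*}
Strict feasibility of \eqref{prob:primalSDP} is preserved after adding the trace constraint with slack (possibly after enlarging $\tau$ slightly, which does not affect validity). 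Slater's condition therefore gives strong duality, and the primal value equals the dual value.

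Next, we eliminate $s$. For fixed $\vy$, the optimal choice is $s = \max\{0, -\lambda_{\min}(C - \sum_i y_i A_i)\}$. The dual thus becomes
\begin{align*}
\max_{\vy} \; \vb^\intercal \vy + \tau \min\left\{0, \lambda_{\min}\left(C - \sum_i y_i A_i\right)\right\}.
\end{align*}
To reach the pure form \eqref{eq:EOptGeneral}, we absorb the linear term and the $\min\{0,\cdot\}$ into a single minimum eigenvalue of a block-diagonal matrix. Since $\lambda_{\min}$ of a block-diagonal matrix is the minimum over blocks, we set
\begin{align*}
M(\vy) = \tau\,\Diag\left(C - \sum_i y_i A_i,\; 0\right) + \vb^\intercal \vy \, I_{n+1}.
\end{align*}
With this choice, $\lambda_{\min}(M(\vy))$ equals the objective above. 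Moreover, $M$ is affine in $\vy$, as required for an instance of \eqref{eq:EOptGeneral} with $\mathcal{X} = \R^m$, or with $\mathcal{X}$ a sufficiently large box containing a dual optimum. Finally, a primal optimal $Y$ is recovered from the dual as usual, as the $\tau$-scaled subgradient $\tau\, \vu \vu^\intercal$ (or a convex combination of such terms) at the dual optimum. So solving the eigenvalue problem solves \eqref{prob:primalSDP}.

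We expect the main obstacle to be the mismatch between this construction and the template of \eqref{eq:EOptGeneral} as introduced in the paper. There $M(\vx) = C + \sum_i x_i \vm_i \vm_i^\intercal$ has rank-one PSD terms and $\mathcal{X}$ is a polyhedron, whereas here the $A_i$ are general symmetric matrices and $\vy$ is free. We would handle this first by reading \eqref{eq:EOptGeneral} with a general affine $M$, which is how the proposition is phrased (``an instance of \eqref{eq:EOptGeneral}''). If the rank-one form is insisted upon, we would decompose each $A_i = P_i - N_i$ into PSD parts via eigendecomposition, write $y_i = y_i^+ - y_i^-$ with nonnegative parts, and bound these parts in a box so that $\mathcal{X}$ stays polyhedral. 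This yields a sum of rank-one terms with nonnegative coefficients, together with the constant shift needed to keep the constant matrix PSD.

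The remaining steps are routine: justifying that the box bound does not cut off the dual optimum, which follows from compactness of the dual optimal set under primal strict feasibility, and checking the Slater argument.
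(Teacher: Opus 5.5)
Your proposal is correct and is essentially the paper's argument. The paper lifts to the spectraplex with the block-diagonal $Z=\frac{1}{\tau}\Diag(Y,y)$, $\Tr(Z)=1$, and dualizes to get $\vb^\intercal\vx+\lambda_{\min}$ of the zero-padded matrix $\tau\Diag(C-\sum_i x_iA_i,0)$ (plus redundant off-diagonal multipliers that are optimally zero). This is exactly your $M$ obtained by eliminating $s$, and your identity shift is the ``appropriate affine map'' the paper leaves implicit.

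Your enlargement of $\tau$ to keep Slater's condition fills a step the paper glosses over. The only slips are in your optional side remarks. First, a box containing a dual optimum needs only dual attainment; compactness of the optimal $\vy$-set fails when the $A_i$ are linearly dependent. Second, after splitting $y_i=y_i^+-y_i^-$, the variable $y_i^+$ still multiplies $P_i$ and $N_i$ with opposite signs. The rank-one form therefore needs separate nonnegative copies $U\mp y_i$ for each eigencomponent of $A_i$, tied together by equality constraints in $\mathcal{X}$.
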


\section{Branch-and-bound pruning}\label{sec:algorithmic_techniques}
In this section, we develop a custom rank-based pruning rule and an eigenvalue-based pruning rule for nodes in the B\&B tree.

\begin{restatable}{proposition}{proprankbasedpruning}[Rank-based pruning]
\label{prop:rankbasedpruning}
At a node $l$ of the B\&B tree, let $\mathcal{S}_l^0$, $\mathcal{S}_l^1$ be the set of variables fixed to zero, one, respectively,
and let $\mathcal{S}_l^f$ be the set of variables not fixed yet.
Define the remaining budget $N_l = N - \mathcal{S}_l^1$, the partial matrix, residual feasible set and residual matrix:
\begin{align*}
& C_l = C + \sum_{i\in \mathcal{S}_l^1}  \vm_i \vm_i^\intercal \\
& \Xset_l = \{ \vx \in \{0,1\}^{|\mathcal{S}_l^f|}, \sum_{i \in \mathcal{S}_l^f} x_i = N_l \} \\
& M^l = \begin{bmatrix}
    \vm_{i_1} & \vm_{i_2} \dots \vm_{i_{|\mathcal{S}_l^f|}}
\end{bmatrix}_{i_s \in \mathcal{S}_l^f}.
\end{align*}
If $\rank(C_l) + \min \{\rank(M^l), N_l\} < n$, then $\lambda_{\min}(C_l + \sum_{i \in \mathcal{S}_l^f} x_i \vm_i \vm_i^\intercal) = 0$ for any $\vx \in \Xset_l$
and the node can be pruned.
\end{restatable}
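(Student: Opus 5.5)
The argument is a rank count: under the stated condition, $C_l + \sum_{i \in \mathcal{S}_l^f} x_i \vm_i \vm_i^\intercal$ has rank strictly less than $n$, so it is singular. Since it is positive semi-definite, its smallest eigenvalue must then be exactly $0$. We use the subadditivity of rank, $\rank(A+B) \leq \rank(A) + \rank(B)$, together with the fact that the sum of a positive semi-definite matrix ($C \in \SymMat_n^+$ by assumption) and rank-one outer products $\vm_i\vm_i^\intercal$ with nonnegative weights is again positive semi-definite. Here $N_l$ is read as $N - |\mathcal{S}_l^1|$.

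First, fix any $\vx \in \Xset_l$ and let $S(\vx) = \{ i \in \mathcal{S}_l^f : x_i = 1\}$, so that $|S(\vx)| = N_l$ because $\vx$ is binary with $\sum_i x_i = N_l$. The matrix becomes $C_l + B(\vx)$ with
\begin{align*}
B(\vx) = \sum_{i \in S(\vx)} \vm_i \vm_i^\intercal = M_{S(\vx)} M_{S(\vx)}^\intercal,
\end{align*}
where $M_{S(\vx)}$ is the submatrix of $M^l$ formed by the columns indexed by $S(\vx)$.

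Second, bound $\rank(B(\vx))$ in two ways. On one hand, $\rank(B(\vx)) = \rank(M_{S(\vx)}) \leq |S(\vx)| = N_l$. On the other hand, the column space of $M_{S(\vx)}$ is contained in that of $M^l$, so $\rank(B(\vx)) \leq \rank(M^l)$. Together, $\rank(B(\vx)) \leq \min\{\rank(M^l), N_l\}$. Subadditivity of rank then gives
\begin{align*}
\rank(C_l + B(\vx)) \leq \rank(C_l) + \min\{\rank(M^l), N_l\} < n.
\end{align*}
Hence $C_l + B(\vx)$ has a nontrivial kernel, and $0$ is one of its eigenvalues.

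Third, we show that $0$ is the \emph{minimum} eigenvalue. Since $C \succeq 0$ and every $\vm_i\vm_i^\intercal \succeq 0$, the matrix $C_l + B(\vx)$ is positive semi-definite, so all its eigenvalues are nonnegative. Therefore $\lambda_{\min} = 0$ for every $\vx \in \Xset_l$.

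Finally, the pruning claim follows. The objective is $\lambda_{\min} \geq 0$ at any feasible point, so no completion at node $l$ can do better than $0$. Any incumbent with a positive value, or with value $0$, dominates the whole subtree, and the node can be pruned.

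No step is a real obstacle. The only care needed is to use integrality, so that at most $N_l$ columns are active; for a fractional $\vx$ the $N_l$ bound would fail. This is why $\Xset_l$ is stated with binary variables.
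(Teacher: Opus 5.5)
Your proof is correct and uses the same rank-subadditivity argument as the paper, bounding the free-variable contribution by $\min\{\rank(M^l), N_l\}$. You also make explicit two steps the paper leaves implicit: the column-space containment that gives the $\rank(M^l)$ bound, and the positive semi-definiteness of $C_l + B(\vx)$, which turns ``singular'' into $\lambda_{\min} = 0$.
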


\cref{prop:rankbasedpruning} can be used to efficiently stop exploring nodes and only requires maintaining the rank of the partial solution constructed in the tree from the fixings.
If $C \succ 0$, the technique cannot prune any node since the problem is never rank-deficient.
We can however define an equivalent problem with $\hat{C} = C - \lambda_{\min}(C) I$, shifting the objective by $\lambda_{\min}(C)$.
We now state in \cref{prop:interlacing_bound} another upper bound that can be computed at any node prior to solving the continuous concave relaxation.

\begin{restatable}{proposition}{propinterlacingupperbound}[Eigenvalue-Based Upper Bound]\label{prop:interlacing_bound}
At a node $l$ of the B\&B tree, retain the notation of \cref{prop:rankbasedpruning}.
Let the eigenvalues of $C_l$ be ordered non-decreasingly as
$\lambda_1(C_l) \leq \cdots \leq \lambda_n(C_l)$.
Then for every $\vx \in \Xset_l$:
\begin{align*}
    \lambda_{\min}\!\left(C_l + \sum_{i \in \mathcal{S}_l^f} x_i \vm_i \vm_i^\intercal\right)
    \;\leq\;
    \lambda_{N_l+1}(C_l),
\end{align*}
with the convention $\lambda_{N_l+1}(C_l) = +\infty$ if $N_l \geq n$.
\end{restatable}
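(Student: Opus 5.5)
Since every $\vx \in \Xset_l$ is binary with $\sum_{i\in\mathcal{S}_l^f} x_i = N_l$, the matrix $C_l + \sum_{i \in \mathcal{S}_l^f} x_i \vm_i \vm_i^\intercal$ is $C_l$ plus a sum of exactly $N_l$ rank-one positive semi-definite terms $\vm_i\vm_i^\intercal$. Write this perturbation as $P = \sum_{i \in S} \vm_i\vm_i^\intercal$ with $S = \{i : x_i = 1\}$ and $|S| = N_l$. Then $\rank(P) \leq N_l$. If $N_l \geq n$ the claim is trivial by the stated convention, so assume $N_l < n$.

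The core step is a Weyl-type inequality for low-rank perturbations: for symmetric $A$ and positive semi-definite $P$ of rank at most $r$, we have $\lambda_k(A+P) \leq \lambda_{k+r}(A)$ whenever $k+r\le n$. We apply it with $k=1$, $A = C_l$ and $r=N_l$. To make this self-contained we use Courant--Fischer directly. First, $\lambda_1(A+P) = \min_{\norm{\vvv}=1} \vvv^\intercal (A+P)\vvv$. Let $U$ be the span of eigenvectors of $C_l$ associated with $\lambda_1(C_l),\dots,\lambda_{N_l+1}(C_l)$, which has dimension $N_l+1$. Let $K = \ker P$, equivalently the orthogonal complement of $\mathrm{span}\{\vm_i : i\in S\}$, which has dimension at least $n - N_l$. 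Since $(N_l+1) + (n-N_l) > n$, the intersection $U\cap K$ contains a unit vector $\vvv$.

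For this $\vvv$ we get $\vvv^\intercal P \vvv = 0$ because $\vvv \in K$, and $\vvv^\intercal C_l \vvv \leq \lambda_{N_l+1}(C_l)$ because $\vvv \in U$. Hence $\lambda_{\min}(C_l+P) \leq \vvv^\intercal(C_l+P)\vvv \leq \lambda_{N_l+1}(C_l)$. This holds for every $\vx\in\Xset_l$, which proves the statement.

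The main obstacle is just the dimension-counting step; everything else is bookkeeping. One can optionally remark that this bound also recovers \cref{prop:rankbasedpruning} when $C_l \succeq 0$: there $\rank(C_l) + N_l < n$ gives $\lambda_{N_l+1}(C_l)=0$.
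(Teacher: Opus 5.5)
Your proof is correct and is essentially the paper's argument: the paper also intersects $\ker(\Delta_{\vx})$, of dimension at least $n-N_l$, with the span of the eigenvectors of $C_l$ for its $N_l+1$ smallest eigenvalues, then bounds the Rayleigh quotient of a unit vector in that intersection by $\lambda_{N_l+1}(C_l)$. Your explicit treatment of the case $N_l \geq n$ through the stated convention is a small tidying step that the paper leaves implicit.
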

We will refer to the pruning rule based on \cref{prop:interlacing_bound} as eigenvalue-based pruning.

\section{Truncated gradient computations}

In this section, we leverage the approximate gradient technique from \citet{d2008smooth} -- presented for a projected gradient descent --
to our setup in which we only exploit linear optimization over the feasible set.

Define the partition function and soft-min density matrix as
\begin{align*}
\rho_\mu(\vx) := \sum_{i=1}^n e^{-\lambda_i(M(\vx))/\mu}, \qquad P_\mu(\vx) := \frac{1}{\rho_\mu(\vx)} \sum_{i=1}^n e^{-\lambda_i(M(\vx))/\mu}\, \vu_i(M(\vx))\vu_i(M(\vx))^\intercal,
\end{align*}
so that the gradient from \cref{lemma:gradient} reads
$[\nabla g_\mu(\vx)]_k = \langle P_\mu(\vx),\, \vm_k \vm_k^\intercal \rangle$ for $k \in [m]$.

For a truncation level $r \in [n]$, computing only the $r$ smallest eigenvalues of $M(\vx)$ yields the $r$-truncated density matrix and truncated gradient:
\begin{align*}
\widetilde{P}_\mu^{(r)}(\vx) = \frac{\sum_{i=1}^r e^{-\lambda_i(M(\vx))/\mu}\, \vu_i(M(\vx))\vu_i(M(\vx))^\intercal}{\sum_{i=1}^r e^{-\lambda_i(M(\vx))/\mu}}, \qquad \left[\widetilde{\nabla} g_\mu^{(r)}(\vx)\right]_k := \left\langle \widetilde{P}_\mu^{(r)}(\vx),\, \vm_k \vm_k^\intercal\right\rangle.
\end{align*}

We quantify the approximation quality through the tail weight:
\begin{align*}
\tau_r(\vx) := 1 - \frac{\sum_{i=1}^r e^{-\lambda_i(M(\vx))/\mu}}{\rho_\mu(\vx)}
= \frac{\sum_{i=r+1}^{n} e^{-\lambda_i(M(\vx))/\mu}}{\rho_\mu(\vx)},
\end{align*}
and the tail-control gap
\begin{align*}
\Delta_r(\vx) :=
\begin{cases}
\lambda_{r+1}(M(\vx))-\lambda_1(M(\vx)), & r<n,\\
+\infty, & r=n.
\end{cases}
\end{align*}
The corresponding linear-oracle error is denoted by
$\delta_r(\vx):=2\|M\|_{\mathrm{op}}^2\tau_r(\vx)$.
By bounding the spectral gap as an error measure, we can derive a Frank-Wolfe convergence proof under inexact gradients akin to the one originally from \citet{jaggi2013revisiting}.

\begin{restatable}[Frank-Wolfe convergence with truncated gradient]{theorem}{thmFWtruncated}
\label{thm:fwtruncated}
Let $\mu > 0$ and $L = \|M\|_{\mathrm{op}}^2/\mu$ be the smoothness constant of $g_\mu$ from \cref{th:FrankWolfeConvergence}.
Let $\bar{\delta} > 0$ and suppose that at each iteration $t$ the truncation level $r_t$ satisfies
$\delta_{r_t}(\vx_t) \leq \bar{\delta}$. Starting from $\vx_0 \in \mathcal{X}$, run the Frank-Wolfe iterates:
\begin{align*}
& \tilde{\vvv}_t = \underset{\vvv \in \mathcal{X}}{\argmax}\innp{\widetilde{\nabla} g_\mu^{(r_t)}(\vx_t)}{\vvv} \\
& \vx_{t+1} = (1-\gamma_t)\vx_t + \gamma_t \tilde{\vvv}_t \\
& \gamma_t = \frac{2}{t+2}.
\end{align*}
Then for all $T \geq 1$:
\begin{align*}
    g^* - g(\vx_T) \leq \mu\log n + \frac{2LD^2}{T+1} + 2\bar{\delta},
\end{align*}
where $g^* = \max_{\vx \in \mathcal{X}} g(\vx)$, and $D = \mathrm{diam}(\mathcal{X})$.
\end{restatable}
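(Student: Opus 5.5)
The plan is to split the error into three pieces: the smoothing bias $\mu\log n$ from \eqref{eq:smoothing_sandwich}, the Frank-Wolfe optimization error on the smooth surrogate $g_\mu$, and the error from using $\widetilde{\nabla} g_\mu^{(r_t)}$ instead of $\nabla g_\mu$.

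\textbf{Step 1 (smoothing).} Write $g_\mu^* = \max_{\mathcal{X}} g_\mu$. By \eqref{eq:smoothing_sandwich}, $g^* \le g_\mu^*$ and $g(\vx_T) \ge g_\mu(\vx_T) - \mu\log n$. Hence
\[
g^* - g(\vx_T) \le \mu\log n + \bigl(g_\mu^* - g_\mu(\vx_T)\bigr).
\]
It then suffices to show $h_T := g_\mu^* - g_\mu(\vx_T) \le \frac{2LD^2}{T+1} + 2\bar\delta$.

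\textbf{Step 2 (gradient error in the linear oracle).} For every $\vvv \in \mathcal{X} \subseteq [0,1]^m$, I would bound $\bigl|\innp{\nabla g_\mu(\vx) - \widetilde{\nabla} g_\mu^{(r)}(\vx)}{\vvv - \vx}\bigr| \le \delta_r(\vx)$. The steps are:
\begin{itemize}
\item Write $\innp{\nabla g_\mu(\vx)}{\vvv} = \innp{P_\mu(\vx)}{M(\vvv) - C}$, and similarly for the truncated density, so the quantity equals $\innp{P_\mu - \widetilde P^{(r)}_\mu}{\sum_k (v_k - x_k)\vm_k\vm_k^\intercal}$.
\item Compute the density difference directly. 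With $w_i = e^{-\lambda_i/\mu}/\rho_\mu$, we get $P_\mu - \widetilde P^{(r)}_\mu = \sum_{i>r} w_i \vu_i\vu_i^\intercal - \frac{\tau_r}{1-\tau_r}\sum_{i\le r} w_i\vu_i\vu_i^\intercal$. Its trace norm is at most $2\tau_r$.
\item Bound the spectral norm of $\sum_k (v_k - x_k)\vm_k\vm_k^\intercal$ by $\norm{M}_\mathrm{op}^2$ times a constant depending on $\norm{\vvv - \vx}$.
\end{itemize}
Matching the constant $2\|M\|_{\mathrm{op}}^2$ in $\delta_r$ is where the normalization of $\norm{M}_\mathrm{op}$ matters. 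I would follow the convention used in the proof of \cref{th:FrankWolfeConvergence}, treating the linear-map norm as bounding the spectral norm of $M(\vvv) - M(\vx)$ for $\vvv, \vx \in \mathcal{X}$. I expect this to be the main obstacle: getting exactly the factor $2\tau_r$ times the operator-norm term, rather than an extra $\sqrt{m}$ or $D$ factor. If necessary, I would absorb the discrepancy into the definition of $\delta_r$.

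\textbf{Step 3 (recursion).} By $L$-smoothness,
\[
g_\mu(\vx_{t+1}) \ge g_\mu(\vx_t) + \gamma_t\innp{\nabla g_\mu(\vx_t)}{\tilde\vvv_t - \vx_t} - \tfrac{L\gamma_t^2D^2}{2}.
\]
Bound the inner product in three moves:
\begin{itemize}
\item Step 2 gives $\innp{\nabla g_\mu(\vx_t)}{\tilde\vvv_t - \vx_t} \ge \innp{\widetilde\nabla g_\mu^{(r_t)}(\vx_t)}{\tilde\vvv_t - \vx_t} - \bar\delta$.
\item Optimality of $\tilde\vvv_t$ for the truncated gradient lets us replace $\tilde\vvv_t$ by the true FW vertex $\vvv_t^*$, and applying Step 2 again costs another $\bar\delta$.
\item Concavity gives $\innp{\nabla g_\mu(\vx_t)}{\vvv_t^* - \vx_t} \ge h_t$.
\end{itemize}
Together these yield
\[
h_{t+1} \le (1-\gamma_t)h_t + 2\gamma_t\bar\delta + \tfrac{L\gamma_t^2D^2}{2}.
\]

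\textbf{Step 4 (induction).} With $\gamma_t = 2/(t+2)$, set $e_t = h_t - 2\bar\delta$. Since $(1-\gamma_t)2\bar\delta + 2\gamma_t\bar\delta = 2\bar\delta$, the recursion becomes $e_{t+1} \le (1-\gamma_t)e_t + \frac{L\gamma_t^2D^2}{2}$. The standard Jaggi induction then gives $e_T \le \frac{2LD^2}{T+2} \le \frac{2LD^2}{T+1}$; the base case uses $\gamma_0 = 1$, so $e_1 \le LD^2/2$. Combining this with Step 1 gives the claim.
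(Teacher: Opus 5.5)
Your proposal is correct and is essentially the paper's proof: you use the same smoothing sandwich, the same oracle-error bound $|\innp{\nabla g_\mu(\vx)-\widetilde{\nabla} g_\mu^{(r)}(\vx)}{\vy-\vz}|\le 2\|M\|_{\mathrm{op}}^2\tau_r(\vx)$ (the paper settles your normalization worry with $\|\vy-\vz\|_\infty\le 1$ and $\sum_k(\vm_k^\intercal\vu)^2\le\|M\|_{\mathrm{op}}^2$ for unit $\vu$, i.e.\ it reads $\|M\|_{\mathrm{op}}$ as the spectral norm of the matrix with columns $\vm_k$), and the same recursion $h_{t+1}\le(1-\gamma_t)h_t+2\gamma_t\bar\delta+\gamma_t^2LD^2/2$. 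The differences are cosmetic: you compare against the exact FW vertex rather than $\vx_\mu^*$, and you solve the recursion via the shift $e_t=h_t-2\bar\delta$ and Jaggi's induction (getting the slightly sharper $2LD^2/(T+2)$), whereas the paper telescopes the potential $\Phi_t=\tfrac{t(t+1)}{2}h_t$.
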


\begin{restatable}[Complexity and truncation condition]{corollary}{corcomplexitytruncation}
\label{cor:complexitytruncation}
Let $\varepsilon > 0$, $\mu = \varepsilon/(2\log n)$, $D^2 = 2N$ for the hypersimplex, and $\bar{\delta} = \varepsilon/8$.
Then $g^* - g(\vx_T) \leq \varepsilon$ is achieved at iteration $T$ such that
\begin{align*}
T \geq \frac{32N\|M\|_{\mathrm{op}}^2\log n}{\varepsilon^2}.
\end{align*}
For $r_t<n$, the truncation condition $\delta_{r_t}(\vx_t) \leq \varepsilon/8$ at iteration $t$ is satisfied whenever the spectral gap
\begin{align*}
\Delta_{r_t}(\vx_t) = \lambda_{r_t+1}(M(\vx_t)) - \lambda_1(M(\vx_t))
\geq \mu\log\!\left(\frac{16(n - r_t)\|M\|_{\mathrm{op}}^2}{\varepsilon}\right).
\end{align*}
For $r_t=n$, the gradient is exact and $\delta_{r_t}(\vx_t)=0$.
Similarly to the unconstrained case in \citet{d2008smooth}, the iteration count is independent of the truncation level $r_t$;
only the per-iteration cost varies, requiring $O(r_t n^2)$ operations for a partial eigendecomposition of $M(\vx_t)$ via matrix-vector products.
\end{restatable}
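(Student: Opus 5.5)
The corollary has two independent parts: an iteration count and a sufficient spectral-gap condition for the truncation level. I would derive both by instantiating \cref{thm:fwtruncated} with the prescribed parameters and then reading off each part.

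\textbf{Iteration count.} With $\mu = \varepsilon/(2\log n)$ the smoothing term equals $\mu\log n = \varepsilon/2$, and with $\bar{\delta}=\varepsilon/8$ the oracle term equals $2\bar{\delta} = \varepsilon/4$. So it suffices to make the curvature term at most $\varepsilon/4$. Substituting $L = \|M\|_{\mathrm{op}}^2/\mu = 2\log n\,\|M\|_{\mathrm{op}}^2/\varepsilon$ and $D^2=2N$ gives
\begin{align*}
\frac{2LD^2}{T+1} = \frac{8N\|M\|_{\mathrm{op}}^2\log n}{\varepsilon (T+1)} .
\end{align*}
Requiring this to be at most $\varepsilon/4$ yields $T+1 \geq 32N\|M\|_{\mathrm{op}}^2\log n/\varepsilon^2$, which is implied by the stated condition on $T$. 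I would also note briefly why $D^2=2N$ is valid for the hypersimplex: for $\vx,\vy\in[0,1]^m$ with equal coordinate sums, $\|\vx-\vy\|^2 \leq \|\vx-\vy\|_1 \leq 2N$.

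\textbf{Truncation condition.} Here I need to bound the tail weight $\tau_r$ in terms of the gap $\Delta_r$. For each $i\geq r+1$ we have $\lambda_i \geq \lambda_{r+1} = \lambda_1+\Delta_r$. Bounding the numerator of $\tau_r$ by $(n-r)e^{-(\lambda_1+\Delta_r)/\mu}$ and the denominator $\rho_\mu$ from below by its first term $e^{-\lambda_1/\mu}$ gives
\begin{align*}
\tau_r \leq (n-r)e^{-\Delta_r/\mu}.
\end{align*}
Hence $\delta_r = 2\|M\|_{\mathrm{op}}^2\tau_r \leq 2(n-r)\|M\|_{\mathrm{op}}^2 e^{-\Delta_r/\mu}$. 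Requiring the right-hand side to be at most $\varepsilon/8$ is equivalent to $\Delta_r \geq \mu\log\bigl(16(n-r)\|M\|_{\mathrm{op}}^2/\varepsilon\bigr)$, which is exactly the stated gap condition.

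\textbf{Remaining claims.} For $r=n$ the tail sum is empty, so $\tau_n=0$ and therefore $\delta_n=0$. The independence of the iteration count from $r_t$ is immediate, since the bound on $T$ depends only on $\varepsilon$, $N$, $\|M\|_{\mathrm{op}}$ and $n$. For the cost, a Lanczos-type method obtains $r_t$ eigenpairs using $O(r_t)$ dense matrix-vector products of cost $O(n^2)$ each, giving $O(r_t n^2)$ up to orthogonalization overhead. I would state this as a standard fact rather than prove it.

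\textbf{Main obstacle.} The only delicate point is the tail bound, specifically keeping $\Delta_r$ measured relative to $\lambda_1$ and bounding $\rho_\mu$ below by its single dominant term. Everything else is arithmetic.
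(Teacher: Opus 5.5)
Your proposal is correct and follows the paper's proof. You instantiate \cref{thm:fwtruncated}, split the error budget as $\varepsilon/2+\varepsilon/4+\varepsilon/4$, and convert $\delta_{r_t}(\vx_t)\leq\varepsilon/8$ into the gap condition via the tail bound $\tau_r\leq(n-r)e^{-\Delta_r/\mu}$, which you re-derive inline (along with the hypersimplex diameter) where the paper cites \cref{lemma:spectraltailweight}.
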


We highlight that the condition on $\Delta_r(\vx_t)$ is instance- and iterate-specific:
a large separation between the first eigenvalue and the discarded tail permits a small truncation level; in particular,
a large gap between $\lambda_2(M(\vx_t))$ and $\lambda_1(M(\vx_t))$ means that a single eigenvector pair can suffice.
Predicting this gap from the structure of $A$ a priori remains an open question and too cumbersome in general.
In practice, the condition $\delta_{r_t}(\vx_t) \leq \bar{\delta}$ can be checked adaptively while computing the smallest eigenpairs.

\section{Analysis for integral instances}\label{sec:analysis_integral_instances}

We consider in this section the specific case in which $M(\vx)$ is integral with for any binary assignment of $\vx$, e.g.,
when the entries of the vectors $\vm_k$ and matrix $C$ are integers.
This lets us derive formal guarantees that the smoothing constant never needs vanishing to zero to reach a lower precision.
Using the notations introduced in \cref{prop:rankbasedpruning}, we define two more quantities:
\begin{align*}
T^0_l &= \Tr(C_l) \\ 
T_l &= T^0_l + \sum_{i = 1}^{N_l} \norm{\vm_{(i)}}_2^2.
\end{align*}
where $\vm_{(i)}$ is the $i$-th vector of $\mathcal{S}_l^f$, sorted by non-increasing Euclidean norm.

\begin{restatable}[Minimum suboptimality gap]{theorem}{thmsafesmoothing}
\label{thm:safesmoothing}
Consider a node $l$ of the branch-and-bound tree on an instance for which $M_l(\vx)$ is integer for any $\vx \in \{0,1\}^m$,
and using the notations introduced in \cref{prop:rankbasedpruning}.
Define the submatrix $M_{\mathcal{S}_l^f}$ by selecting the $\vm_k$ corresponding to the set $\mathcal{S}_l^f$.
% reduced rank $r_l = \rank(A_{\mathcal{S}_l^f})$, with 
For $\vx \in \Xset_l$ such that $M_l(\vx) \succ 0$, define $q_{\vx} \in \Z\left[\lambda\right]$ the characteristic polynomial of $M_l(\vx)$:
\begin{align*}
q_{\vx}(\lambda) = \det(\lambda I - M_l(\vx)) = \prod_{i=1}^{n} (\lambda - \lambda_i(M_l(\vx))).
\end{align*}
Let $\vx_l^* \in \Xset_l$ be an integer optimal completion at node $l$ and $\vy \in \Xset_l$ be any non-optimal feasible completion such that $M_l(\vy) \succ 0$.
Denote $g^* = \lambda_{\min}(M_l(\vx_l^*))$, $g_y = \lambda_{\min}(M_l(\vy))$, and $\delta_l = g^* - g_y > 0$.
We have that $q_{\vx_l^*}$ and $q_{\vy}$ are integer polynomials of degree $n$,
with roots $g^*=\lambda_1(M_l(\vx_l^*)) \leq \dots \leq \lambda_n(M_l(\vx_l^*))$
and $g_y=\lambda_1(M_l(\vy)) \leq \dots \leq \lambda_n(M_l(\vy))$, respectively.
Define:
\begin{align*}
d = \deg(\gcd(q_{\vx_l^*}, q_{\vy})) \in \{0, \dots, n-1\},
\end{align*}
where the upper bound $n-1$ arises because $g_y$ is a root of $q_{\vy}$ but not $q_{\vx_l^*}$, since $g^* > g_y$.
Then, we have
\begin{align*}
\delta_l \geq \frac{1}{T_l^{n^2-nd-1}}
\end{align*}
\end{restatable}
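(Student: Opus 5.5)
The plan is a root-separation argument based on the resultant of two integer polynomials. The resultant will be a nonzero integer, hence at least $1$ in absolute value, while one of its root-difference factors is exactly $\delta_l$ and all the others are bounded by $T_l$.

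\emph{Step 1: split off the shared part.} Since $M_l(\vx_l^*)$ and $M_l(\vy)$ are integer matrices, $q_{\vx_l^*}$ and $q_{\vy}$ are monic polynomials in $\Z[\lambda]$ of degree $n$. Write $h=\gcd(q_{\vx_l^*},q_{\vy})$, normalized to be monic. By Gauss' lemma, a monic gcd of monic integer polynomials lies in $\Z[\lambda]$, and $\deg h = d$.

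Let $r$ be the monic factor of $q_{\vy}$ collecting, with multiplicity, exactly those roots of $q_{\vy}$ that are not roots of $q_{\vx_l^*}$. Equivalently, $r = q_{\vy}/\gcd(q_{\vy}, q_{\vx_l^*}^{\,n})$, so $r$ is again a monic integer polynomial by Gauss' lemma. Every root of $q_{\vy}$ that is not in $r$ is a root of $h$, so $\deg r \le n-d$. Moreover $g_y$ is a root of $r$, since $g_y < g^* = \lambda_{\min}(M_l(\vx_l^*))$ means $g_y$ is not an eigenvalue of $M_l(\vx_l^*)$.

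\emph{Step 2: a nonzero integer resultant.} By construction, $q_{\vx_l^*}$ and $r$ have no common root. For monic integer polynomials the resultant is an integer (it is the Sylvester determinant), so
\[
1 \le |\mathrm{Res}(q_{\vx_l^*}, r)| = \prod_{i=1}^{n}\prod_{j=1}^{\deg r} \bigl|\lambda_i(M_l(\vx_l^*)) - \beta_j\bigr|,
\]
where $\beta_1,\dots,\beta_{\deg r}$ are the roots of $r$ (a subset of the eigenvalues of $M_l(\vy)$). This product has at most $n(n-d)$ factors. One of them is $|g^* - g_y| = \delta_l$.

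\emph{Step 3: bound the remaining factors by $T_l$.} All roots involved are eigenvalues of the positive definite matrices $M_l(\vx_l^*)$ and $M_l(\vy)$, so they lie in $(0, \Tr(\cdot)]$. The difference of two numbers in $(0,\tau]$ is at most $\tau$ in absolute value. The traces are bounded as follows: for any $\vx\in\Xset_l$,
\[
\Tr M_l(\vx) = T^0_l + \sum_{i\in\mathcal{S}_l^f} x_i\norm{\vm_i}_2^2 \le T_l,
\]
because at most $N_l$ free vectors are selected and the $N_l$ largest norms give the maximum. Since $M_l(\vx)$ is a nonzero positive semidefinite integer matrix, its trace is a positive integer, so $T_l \ge 1$. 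Combining,
\[
1 \le \delta_l\, T_l^{\,n(n-d)-1},
\]
where passing from at most $n(n-d)-1$ remaining factors to exactly that exponent is legitimate because $T_l\ge1$. This gives $\delta_l \ge T_l^{-(n^2-nd-1)}$.

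\emph{Main obstacle.} The delicate point is Step 1: obtaining a factor of $q_{\vy}$ of degree at most $n-d$ that is simultaneously integral, contains $g_y$, and shares no root with $q_{\vx_l^*}$. This matters when eigenvalues are repeated, because then the naive quotient $q_{\vy}/h$ may still share roots with $q_{\vx_l^*}$, and the resultant could vanish. I would handle this with the $\gcd(q_{\vy}, q_{\vx_l^*}^{\,n})$ construction above. The remaining checks are then routine:
\begin{itemize}
\item integrality follows from Gauss' lemma, since all polynomials are monic;
\item the degree bound holds because every removed root is also a root of $h$.
\end{itemize}
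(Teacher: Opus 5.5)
Your proof is correct, but it differs from the paper in one choice: which factor of $q_{\vy}$ you pair with $q_{\vx_l^*}$ in the resultant.

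The paper uses the minimal polynomial $m_{\vy}$ of $g_y$ over $\mathbb{Q}$. This polynomial is monic in $\Z[\lambda]$ because $g_y$ is an algebraic integer. Being irreducible, it is automatically coprime to $q_{\vx_l^*}$ as soon as $g_y$ is not a root of $q_{\vx_l^*}$, so the repeated-eigenvalue issue you flag as the main obstacle never arises there. The paper then gets $1 \le \delta_l T_l^{sn-1}$ with $s=\deg m_{\vy}$. It deduces $s\le n-d$ because $m_{\vy}$ divides $q_{\vy}$ but not $\gcd(q_{\vx_l^*},q_{\vy})$, hence divides the cofactor $q_{\vy}/\gcd(q_{\vx_l^*},q_{\vy})$.

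Your factor $r=q_{\vy}/\gcd(q_{\vy},q_{\vx_l^*}^{\,n})$ avoids minimal polynomials and irreducibility entirely and needs only Gauss' lemma for integrality, so it is more elementary. The cost is that $m_{\vy}$ divides $r$, so your intermediate exponent $n\deg r-1$ is never better than the paper's $sn-1$, and can be much worse. For example, if $g_y\in\Z$ then $s=1$, and the paper's argument actually gives $\delta_l\ge T_l^{-(n-1)}$.

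Both routes reach the stated bound. You also make explicit the condition $T_l\ge 1$, which the paper uses silently when it replaces $s$ by $n-d$.

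One small fix is needed. Your justification of $\deg r\le n-d$ runs in the wrong direction. Knowing that every removed root is a root of $h$ does not bound the removed degree from below. What you need is that $h$ divides $\gcd(q_{\vy},q_{\vx_l^*}^{\,n})$: each common root is stripped from $q_{\vy}$ with its full multiplicity, which is at least its multiplicity in $h$. Hence the removed part has degree at least $d$.
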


We can use a bound with $d=0$ in the theorem to derive a corollary on a practical choice of $\mu_l$.
\begin{corollary}[Safe smoothing condition]\label{cor:smoothnessbound}
At node $l$, given a numerical tolerance $\varepsilon \in (0,1)$, a fixed smoothing parameter
\begin{align*}
    \mu_l = (1-\varepsilon) \frac{1}{T_l^{n^2-1} \log n}
\end{align*}
ensures that the set of integer optimizers of $g_{\mu}$ coincides with the set of optimizers of $g$.
\end{corollary}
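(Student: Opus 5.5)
The corollary combines the minimum-gap bound of \cref{thm:safesmoothing} with the sandwich argument from \cref{sec:smoothing_minimum_eigenvalue}, which showed that any $\mu < \delta/\log n$ makes the integer optimizers of the smoothed and original problems coincide. The plan is to (i) obtain a uniform lower bound $\delta$ on the gap between the optimal value and every non-optimal feasible value at node $l$, (ii) check that $\mu_l$ is strictly below $\delta/\log n$, and (iii) rerun the sandwich argument to get equality of the optimizer \emph{sets}, not just the value.

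\textbf{Step 1: a uniform gap $\delta \geq T_l^{-(n^2-1)}$.} Take any non-optimal feasible completion $\vy \in \Xset_l$ and split into two cases.
\begin{itemize}
\item If $M_l(\vy) \succ 0$, \cref{thm:safesmoothing} gives $\delta_l \geq T_l^{-(n^2-nd-1)}$ for some $d \in \{0,\dots,n-1\}$. Since $T_l \geq 1$ and the exponent $n^2-nd-1$ is largest at $d=0$, we get $\delta_l \geq T_l^{-(n^2-1)}$.
\item If $M_l(\vy)$ is singular, then $g_y = 0$. Since $M_l(\vx_l^*)$ is integral and positive definite, $\det M_l(\vx_l^*)$ is a positive integer, so it is at least $1$. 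Every eigenvalue of $M_l(\vx_l^*)$ is at most $\Tr(M_l(\vx_l^*)) \leq T_l$, because the trace is bounded by $T_l^0$ plus the $N_l$ largest squared norms. Hence
\[
g^* \geq \frac{\det M_l(\vx_l^*)}{T_l^{\,n-1}} \geq T_l^{-(n-1)} \geq T_l^{-(n^2-1)}.
\]
\end{itemize}
If the node optimum itself has $g^*=0$, every feasible point is optimal and the claim is trivial. The facts $T_l \geq 1$ (some nonzero integer entry) and the trace bound need a line each.

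\textbf{Step 2: $\mu_l$ is below the threshold.} With $\delta := T_l^{-(n^2-1)}$ we have $\mu_l \log n = (1-\varepsilon)\delta < \delta$ for $\varepsilon \in (0,1)$.

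\textbf{Step 3: the optimizer sets coincide.} Let $\vx_\mu^*$ be an integer maximizer of $g_\mu$ over $\Xset_l$. The chain \eqref{eq:f_mu_bound_opt}, restricted to integer points, gives $g^* - g(\vx_\mu^*) \leq \mu_l\log n < \delta$, so by Step 1 $\vx_\mu^*$ is $g$-optimal.

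For the converse inclusion, let $\vx$ be $g$-optimal. Using \eqref{eq:smoothing_sandwich},
\[
g_\mu(\vx) \geq g^* \qquad\text{and}\qquad g_\mu(\vx_\mu^*) \leq g(\vx_\mu^*) + \mu_l\log n = g^* + \mu_l\log n .
\]
These two bounds leave room for $g_\mu$ to differ across $g$-optimizers, so this direction does not follow from the sandwich alone. This is the main obstacle. I would first try interpreting ``coincide'' as the inclusion just proved (every smoothed integer optimizer is an original optimizer, so the optimal values agree), which is what the paper's argument in \cref{sec:smoothing_minimum_eigenvalue} actually establishes. If genuine set equality is required, ties between $g$-optimizers with different higher spectra break it, so I would note that only the inclusion and value equality hold in general.
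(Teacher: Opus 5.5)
Your Steps 1--3 follow the paper's own route and are correct. Like the paper, you invoke \cref{thm:safesmoothing} at its worst case $d=0$, note that $\mu_l\log n=(1-\varepsilon)T_l^{-(n^2-1)}$ lies strictly below the gap, and rerun the sandwich chain of \cref{sec:smoothing_minimum_eigenvalue}. You also fill in two points the paper leaves implicit. First, the theorem only covers non-optimal completions with $M_l(\vy)\succ 0$. Your singular case, $g^*-g_y=g^*\geq T_l^{-(n-1)}\geq T_l^{-(n^2-1)}$ via the determinant argument of \cref{lemma:lowerboundtrace}, is exactly what makes the gap uniform over all of $\Xset_l$. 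Second, $T_l\geq 1$ is what makes $d=0$ the worst case.

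Your doubt about the reverse inclusion is justified: as literally stated, the corollary is false. Take $n=2$, $C=0$, $\vm_1=(1,0)^\intercal$, $\vm_2=(0,1)^\intercal$, $\vm_3=(0,2)^\intercal$ and $N=2$. The selections $\vx=(1,1,0)$ and $\vx'=(1,0,1)$ give $M(\vx)=I$ and $M(\vx')=\Diag(1,4)$, both with $\lambda_{\min}=1$, while $(0,1,1)$ gives $\lambda_{\min}=0$. However, $g_\mu(\vx)=1$ and $g_\mu(\vx')=1+\mu\log 2-\mu\log(1+e^{-3/\mu})>1$ for every $\mu>0$. So $\vx$ is a $g$-optimizer that is never a $g_\mu$-optimizer, whatever the value of $\mu_l$.

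The argument in \cref{sec:smoothing_minimum_eigenvalue} speaks of ``the'' integer optimizer, so it tacitly assumes uniqueness. The correct conclusion is the one you actually prove: $\argmax_{\Xset_l} g_{\mu_l}\subseteq\argmax_{\Xset_l} g$. Equality holds when the $g$-optimizer is unique, because the $g_{\mu_l}$-argmax over the finite set $\Xset_l$ is nonempty.
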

\cref{cor:smoothnessbound} allows us -- together with a bound on $\norm{M}_\mathrm{op}$
to bound more finely the FW iteration count to reach a tolerance $\varepsilon$ at any node.
In addition, this bound improves further down the tree, since $T_l$ is non-increasing as more experiments are fixed.

\section{Duality-based variable fixing}\label{sec:fixings_from_dual_problem}

In practice, the budget is significantly smaller than the number of possible experiments/edges.
Thus, we are interested in obtaining an exclusion criterion on the experiment space,
allowing us to prune experiments that are certified to not be part of the optimal solution.
In \citet{ahipasaoglu2025column}, such an exclusion criterion was derived for the continuous experiment
design problem under the D-Optimal criterion to exclude experiments during the solution process. 
Taking a similar approach, we derive a criterion that allows the inclusion or exclusion of an experiment
based on the current node solution, dual problem at the node and the tree incumbent.
Note that in the following, we consider problem \eqref{eq:EOptGeneral} where $\mathcal{X}$ encodes a scaled 
probability simplex and integer constraints.

The primal problem of interest is 
\begin{align}
    \label{eq:PrimalProblem}
    \max_{\vx} & \; \lambda_{\min}\left(M(\vx)\right) \tag{P} \\
    \text{s.t. }& \; \sum\limits_{i=1}^m x_i = N \nonumber \\
    & \; \vl \leq \vx \leq \vu . \nonumber
\end{align}
where $\vl$ and $\vu$ are the specific lower and upper bounds coming from branching.
We will only consider the case where $\vl = 0$ and $\vu = 1$, the following derivation can 
be easily adapted to the general case to get tightenings on the variable $\vx$.
To derive the dual problem, we first reformulate problem as a semi-definite program (SDP).
\begin{align}
    \label{eq:SDPPrimal}
    \max_{\lambda, \vx} & \; \lambda \tag{SDP-P} \\
    \text{s.t. }& \; M(\vx) - \lambda I \succeq 0 \nonumber \\
    & \; \sum\limits_{i=1}^m x_i = N \nonumber \\
    & \; \vl \leq \vx \leq \vu \nonumber
\end{align}
Observe that $M(m/N\bm{1})$ is regular, then we can find
strictly feasible $(\lambda, \vx)$ for \eqref{eq:SDPPrimal}.
Hence, Slater's condition is satisfied and strong duality holds.
To start, the Lagrangian of \eqref{eq:SDPPrimal} is given by
\begin{align}
    \label{eq:Lagrangian}
    \mathcal{L}(\lambda, \vx, Z, \zeta, \valpha, \vbeta) = - \lambda - \innp{Z}{M(\vx) - \lambda I} - \innp{\zeta}{N - \sum\limits_{i=1}^m x_i} - \innp{\valpha}{\vx - \vl} - \innp{\vbeta}{\vu - \vx}.
\end{align}
Let $M_i$ be the derivative of $M(\vx)$ with respect to $x_i$.
The dual problem of \eqref{eq:SDPPrimal} is given by
\begin{align}
    \label{eq:SDPDual}
    \min_{Z, \zeta, \valpha, \vbeta} & \; N\zeta - \innp{\valpha}{\vl} + \innp{\vbeta}{\vu} + \innp{Z}{C} \tag{D} \\
    \text{s.t. }& \; \Tr(Z) = 1 \nonumber \\
    & \; \zeta = \innp{Z}{M_i} + \alpha_i - \beta_i \quad \forall i \in [m] \nonumber \\
    & \; Z \succeq 0 \nonumber \\ 
    & \; \valpha, \vbeta \geq 0 \nonumber.
\end{align}
Let us now suppose we have a set of dual variables $(Z, \zeta, \valpha, \vbeta)$. 
We can derive a tightening of bounds based
on these dual values and the tree incumbent. 
\begin{restatable}{proposition}{proptightening}\label{prop:tightening}
Let $S_D = (Z, \zeta, \valpha, \vbeta)$ be a dual solution of \eqref{eq:SDPDual} for given $\vl, \vu \in [0,1]^m$, $\vx$ a primal 
solution and $\hat{G}$ the tree incumbent.
Further, denote by $d_S = N\zeta - \innp{\valpha}{\vl} + \innp{\vbeta}{\vu} + \innp{Z}{C}$ the objective value of the dual problem.
For a variable $x_j$ that is not yet fixed, we have the following conditions which cannot remove any optimal solution of the problem:
\begin{align*}
    d_S - \alpha_j \leq \hat{G} & \Rightarrow \text{ fix $x_j = 0$}\\
    d_S - \beta_j\leq \hat{G} & \Rightarrow \text{ fix $x_j = 1$}.
\end{align*}
\end{restatable}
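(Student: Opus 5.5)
The argument is weak duality applied to the subproblem obtained by fixing $x_j$, reusing the given dual solution as a feasible point of that subproblem's dual.

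Fix an unfixed index $j$. Any integer optimal solution that the fixing $x_j=0$ could remove must have $x_j=1$. Such a solution is feasible for \eqref{eq:SDPPrimal} with the tightened bounds $\vl' = \vl + \mathbf{e}_j$ and $\vu' = \vu$. The dual constraints of \eqref{eq:SDPDual} ($\Tr(Z)=1$, $\zeta = \innp{Z}{M_i}+\alpha_i-\beta_i$, $Z\succeq 0$, $\valpha,\vbeta\ge 0$) do not involve $\vl,\vu$. Hence $S_D=(Z,\zeta,\valpha,\vbeta)$ stays dual feasible for the modified problem. By weak duality, every feasible point of the child problem with $x_j=1$ has objective at most
\begin{align*}
N\zeta - \innp{\valpha}{\vl'} + \innp{\vbeta}{\vu'} + \innp{Z}{C} = d_S - \alpha_j (l_j' - l_j) = d_S - \alpha_j .
\end{align*}
Weak duality needs only dual feasibility, not optimality or Slater. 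The direct check is that for primal-feasible $(\lambda,\vx)$ the Lagrangian terms $\innp{Z}{M(\vx)-\lambda I}$, $\innp{\valpha}{\vx-\vl'}$ and $\innp{\vbeta}{\vu'-\vx}$ are nonnegative. So if $d_S-\alpha_j \le \hat G$, no completion with $x_j=1$ strictly improves on the incumbent. Fixing $x_j=0$ then keeps every solution strictly better than $\hat G$, and the incumbent itself is retained.

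The case $x_j=1$ is symmetric. Any solution with $x_j=0$ satisfies the tightened upper bound $\vu'=\vu-\mathbf{e}_j$. The same dual point then gives the bound $d_S - \beta_j(u_j-u_j') = d_S-\beta_j$. If this is at most $\hat G$, excluding $x_j=0$ loses nothing better than the incumbent.

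The one point needing care is the sign bookkeeping in the dual objective, where the terms enter as $-\innp{\valpha}{\vl}$ and $+\innp{\vbeta}{\vu}$. I would verify the weak-duality inequality directly:
\begin{align*}
\lambda \le \lambda + \innp{Z}{M(\vx)-\lambda I} + \innp{\valpha}{\vx-\vl'} + \innp{\vbeta}{\vu'-\vx} + \zeta\Bigl(N-\sum_i x_i\Bigr).
\end{align*}
Next substitute $\Tr(Z)=1$ and $\innp{Z}{M_i} + \alpha_i-\beta_i-\zeta=0$, which makes the $\lambda$ and $\vx$ terms cancel. What remains is exactly the dual objective evaluated at $\vl',\vu'$.

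The phrase "cannot remove any optimal solution" should be read as follows: every solution strictly better than $\hat G$ is kept, and optimality of $\hat G$ itself is certified if nothing better exists. This is the usual B\&B semantics, and I would state it explicitly. Since $\vl,\vu$ are general here, the claim holds at any node, not only at the root.
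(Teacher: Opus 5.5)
Your proposal is correct and follows the paper's argument. The dual constraints of \eqref{eq:SDPDual} do not involve $\vl,\vu$, so $S_D$ stays feasible for the child problem with $l_j$ raised to $1$ (resp.\ $u_j$ lowered to $0$); its objective there is $d_S-\alpha_j$ (resp.\ $d_S-\beta_j$), and weak duality bounds every completion by that value. Your explicit Lagrangian check of weak duality and your remark that ties with $\hat G$ are harmless only because the incumbent is kept are precisions that the paper's short proof leaves implicit.
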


To utilize the conditions in \cref{prop:tightening} in practice, we need to compute a dual solution for the current node.
One option is to solve the dual problem \eqref{eq:SDPDual} for the current node.
This would yield the best dual values and therefore to the most fixed variables, but it is computationally too demanding.
Alternatively, we can derive a dual solution from a given primal solution $\vx$, which we formalize in the following lemma.

\begin{restatable}{lemma}{lemmadualsolutionfromprimal}\label{lem:dual_solution_from_primal}
Let $(\lambda, \vx)$ be a primal solution pair of \eqref{eq:SDPPrimal} and $\vl, \vu \in [0,1]^m$ be the lower and upper bounds at the current node.
Then, a dual solution $S_D$ can be computed from the primal solution $\vx$ as follows:
\begin{align}
    Z &= \frac{1}{p} \sum\limits_{i=1}^p \vw_i\vw_i^{\intercal} \label{eq:Z_from_primal} \\
    \zeta &= \vvv_{(N)} \label{eq:zeta_from_primal} \\
    \alpha_i &= \max(0, \zeta - v_i) \quad \forall i \in [m] \label{eq:alpha_from_primal} \\
    \beta_i &= \max(0, v_i - \zeta) \quad \forall i \in [m] \label{eq:beta_from_primal}.
\end{align}
where $\vw_i$ are the eigenvectors associated with the smallest eigenvalue of $M(\vx)$, $p$ is the multiplicity of
the minimum eigenvalue, $v_i=\innp{Z}{\vm_i\vm_i^\intercal}$ is the score of experiment $i$ and $\vvv_{(N)}$ is the $N$-th
largest component of $\vvv$ in descending order.
\end{restatable}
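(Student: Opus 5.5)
We need to show that the proposed $S_D$ is feasible for \eqref{eq:SDPDual} at the current node with $\vl=0$, $\vu=1$, and that its objective value equals the primal value whenever $\vx$ is optimal. The proof is a direct verification of each dual constraint, followed by a complementary-slackness computation. No solver output is needed; only the eigendecomposition of $M(\vx)$ is used.

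\textbf{Feasibility.} Take $\vw_1,\dots,\vw_p$ orthonormal in the eigenspace of $\lambda_{\min}(M(\vx))$. Then:
\begin{itemize}
\item[(i)] $Z=\frac1p\sum_i \vw_i\vw_i^\intercal$ is a convex combination of rank-one PSD matrices, so $Z\succeq 0$ and $\Tr(Z)=\frac1p\sum_i\norm{\vw_i}^2=1$.
\item[(ii)] With $M_i=\vm_i\vm_i^\intercal$ we have $v_i=\innp{Z}{M_i}=\frac1p\sum_s\innp{\vw_s}{\vm_i}^2\ge 0$.
\item[(iii)] By \eqref{eq:alpha_from_primal} and \eqref{eq:beta_from_primal}, $\alpha_i,\beta_i\ge 0$. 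At most one of them is positive, and $\alpha_i-\beta_i=\zeta-v_i$. Hence $\zeta=v_i+\alpha_i-\beta_i$ for every $i$.
\end{itemize}
So $S_D$ is dual feasible for any choice of $\zeta$. The choice $\zeta=\vvv_{(N)}$ is made only to make the dual objective small.

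\textbf{Objective value.} With $\vl=0$, $\vu=\mathbf 1$, the dual objective is $d_S=N\zeta+\sum_i\beta_i+\innp{Z}{C}$. Since $\sum_i\max(0,v_i-\zeta)$ is convex piecewise linear in $\zeta$, the value $\zeta=\vvv_{(N)}$ minimizes $N\zeta+\sum_i\max(0,v_i-\zeta)$. The minimum equals the sum of the $N$ largest scores, which is $\max_{\vy\in\Xset}\innp{\vvv}{\vy}$ over the relaxed hypersimplex.

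Therefore
\[
d_S=\innp{Z}{C}+\max_{\vy\in\Xset}\sum_i y_i\innp{Z}{M_i}=\max_{\vy\in\Xset}\innp{Z}{M(\vy)}.
\]
This is the value of the Lagrangian dual function at $Z$. By weak duality it is an upper bound on \eqref{eq:SDPPrimal}, consistent with \cref{prop:tightening}.

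\textbf{Optimal $\vx$ (main obstacle).} Suppose $\vx$ is optimal. Then $\innp{Z}{M(\vx)}=\lambda_{\min}(M(\vx))$, because $Z$ is supported on the minimal eigenspace. Hence $d_S$ equals the primal value if and only if $\vx$ maximizes $\innp{\vvv}{\cdot}$ over $\Xset$, i.e.\ if $\vx$ selects the top-$N$ scores.

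This is exactly where the argument can fail when $p>1$. The normalized projector $\frac1p\sum_s\vw_s\vw_s^\intercal$ need not be the optimal dual matrix: optimality of $\vx$ only guarantees that \emph{some} $Z$ supported on the minimal eigenspace, not necessarily a multiple of the projector, satisfies the first-order conditions. The plan is as follows.
\begin{itemize}
\item If $p=1$, use that $\lambda_{\min}$ is differentiable at $\vx$ with gradient $\vvv$. The optimality conditions of \eqref{eq:EOptGeneral} over the hypersimplex then say that $\vvv$ is maximized at $\vx$, which gives strong duality with the constructed $S_D$.
\item If $p>1$, the lemma should be read as producing a feasible, not necessarily optimal, dual solution. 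Its objective $d_S$ is still a valid upper bound, and \cref{prop:tightening} only requires a feasible dual.
\end{itemize}
So the proof will state and verify feasibility plus the bound $d_S\ge$ primal value. Exactness is claimed only in the simple-eigenvalue case.
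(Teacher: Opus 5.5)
Your proposal is correct and follows essentially the paper's argument: the constructed $(Z,\zeta,\valpha,\vbeta)$ satisfies the constraints of \eqref{eq:SDPDual} for any $\vx$ and any $\zeta$ (the paper's ``nevertheless valid''), and $\zeta=\vvv_{(N)}$ minimizes the resulting piecewise-linear dual objective. Your identification $d_S=\max_{\vy\in\Xset}\innp{Z}{M(\vy)}$ and your caveat that this dual is guaranteed optimal only when $p=1$ are correct refinements of the paper's remark that $Z$ may be a poor approximation of $Z^*$; you should drop the opening promise of exactness for every optimal $\vx$, since the lemma only asserts feasibility.
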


An important structure of the problem is simplicity of eigenvalues, which conditions whether the original function $\lambda_{\min}(M(\cdot))$ has a unique supergradient at a given point.
We derive in the following \cref{prop:simplicityeigengap} a condition for the eigenvalue of the matrix to remain simple at a given point.

\begin{restatable}[Simplicity via eigengap inheritance]{proposition}{propsimplicityeigengap}\label{prop:simplicityeigengap}
\label{prop:simplicity_eigengap}
Let $\gamma_C := \lambda_{2}(C) - \lambda_{1}(C) > 0$ be the eigengap of $C$ at its minimum eigenvalue.
Using the notation of \cref{prop:rankbasedpruning,sec:analysis_integral_instances}, if at node $l$:
\begin{align}
\gamma_C > T_l, \label{eq:simplicity_condition}
\end{align}
then $\lambda_{\min}(M_l(\vx) + C)$ is simple for every $\vx \in \Xset_l$.
\end{restatable}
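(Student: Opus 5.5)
We will use Weyl's inequalities, treating the matrix added to $C$ as a positive semi-definite perturbation whose size is controlled by $T_l$. Write the matrix at node $l$ as $C + E(\vx)$, where
\begin{align*}
E(\vx) = \sum_{i \in \mathcal{S}_l^1} \vm_i\vm_i^\intercal + \sum_{i\in\mathcal{S}_l^f} x_i \vm_i\vm_i^\intercal \succeq 0 .
\end{align*}
Simplicity of $\lambda_{\min}$ is equivalent to $\lambda_2(C+E(\vx)) > \lambda_1(C+E(\vx))$. We therefore aim for a lower bound on $\lambda_2$ and an upper bound on $\lambda_1$ whose difference is positive whenever $\gamma_C > T_l$.

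\textbf{Step 1 (lower bound on $\lambda_2$).} Since $E(\vx)\succeq 0$, Weyl's monotonicity gives
\begin{align*}
\lambda_2(C+E(\vx)) \geq \lambda_2(C) = \lambda_1(C) + \gamma_C .
\end{align*}

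\textbf{Step 2 (upper bound on $\lambda_1$).} Again by Weyl,
\begin{align*}
\lambda_1(C+E(\vx)) \leq \lambda_1(C) + \lambda_{\max}(E(\vx)) \leq \lambda_1(C) + \Tr(E(\vx)),
\end{align*}
where the last inequality uses $E(\vx)\succeq 0$.

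\textbf{Step 3 (bounding the trace by $T_l$).} For $\vx\in\Xset_l$ exactly $N_l$ free indices are selected, so
\begin{align*}
\Tr(E(\vx)) = \sum_{i\in\mathcal{S}_l^1}\norm{\vm_i}_2^2 + \sum_{i\in\mathcal{S}_l^f} x_i\norm{\vm_i}_2^2 \leq \sum_{i\in\mathcal{S}_l^1}\norm{\vm_i}_2^2 + \sum_{i=1}^{N_l}\norm{\vm_{(i)}}_2^2 .
\end{align*}
The inequality holds because the $N_l$ largest free norms dominate any selection of $N_l$ free norms. Since $\Tr(C)\geq 0$ when $C\succeq 0$, the right-hand side is at most $T_l^0 + \sum_{i=1}^{N_l}\norm{\vm_{(i)}}_2^2 = T_l$, recalling $T_l^0 = \Tr(C_l) = \Tr(C) + \sum_{i\in\mathcal{S}_l^1}\norm{\vm_i}_2^2$.

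\textbf{Conclusion.} Combining the three steps,
\begin{align*}
\lambda_2 - \lambda_1 \geq \gamma_C - T_l > 0,
\end{align*}
so the minimum eigenvalue is simple for every $\vx\in\Xset_l$.

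The main obstacle is notational rather than mathematical. The statement writes $M_l(\vx)+C$, and it must be determined whether $C$ is already contained in $M_l$ or $C_l$. If it is counted twice, the matrix in question is $2C + (\text{rank-one terms})$. In that case we apply the same argument with $E(\vx) = C + \sum_{i\in\mathcal{S}_l^1}\vm_i\vm_i^\intercal + \sum_{i\in\mathcal{S}_l^f} x_i\vm_i\vm_i^\intercal$, whose trace is exactly bounded by $T_l$. Either reading yields $\lambda_2 - \lambda_1 \geq \gamma_C - T_l$, using only Weyl's inequality, positive semi-definiteness, and the definition of $T_l$ from \cref{sec:analysis_integral_instances}.
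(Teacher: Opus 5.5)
Your proof is correct and follows the paper's argument essentially step for step. Weyl monotonicity gives $\lambda_2 \geq \lambda_2(C)$, Weyl combined with $\lambda_{\max} \leq \Tr \leq T_l$ gives $\lambda_1 \leq \lambda_1(C) + T_l$, and subtracting yields the gap $\gamma_C - T_l > 0$. Two points you make explicit are useful clarifications: your Step~3 derives the trace bound directly from positive semi-definiteness, so it avoids citing the trace lemma whose hypothesis includes $M_l(\vx) \succ 0$, and you address whether $C$ is counted twice in $M_l(\vx) + C$.
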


\begin{restatable}[Refined Exclusion Criterion]{theorem}{thmexclusion}
\label{thm:exclusion}
Let $\vm_i \in \mathbb{Z}^{n}$ for all $ i \in [m]$. Retain the notation of
\cref{thm:safesmoothing},
at node $l$. Let $\vx_t \in \mathrm{conv}(\mathcal{X}_l)$ be the $t$-th
Frank-Wolfe iterate on the smoothed objective $g_{\mu_l}$ with FW gap
$\Gamma_t$ and certified upper bound
$\bar{g}_l = g_{\mu_l}(\vx_t) + \Gamma_t \geq g_{\mathrm{rel}}^*$.
Let $\bar{\zeta}_t = [\nabla g_{\mu_l}(\vx_t)]_{(N_l)}$, that is the $N_l$-th largest component of $\nabla g_{\mu_l}(\vx_t)$ over $i \in S_f^l$.
Assume the minimum eigenvalue of $M_l(\vx_t)$ is simple, with computable
eigengap:
\begin{align}
    \gamma_t = \lambda_{2}(M_l(\vx_t)) - \lambda_1(M_l(\vx_t)) > 0.\label{eq:eigengap}
\end{align}
Let $\vw_t$ be a computed approximation to the minimum eigenvector of
$M_l(\vx_t)$ satisfying
$\|\vw_t\vw_t^\intercal - \hat{\vw}_t\hat{\vw}_t^\intercal\|_F \leq \eta$,
where $\hat{\vw}_t$ is the exact minimum eigenvector of $M_l(\vx_t)$.
Define:
\begin{align}
    \varepsilon_l &= \|{M}_l\|_{\mathrm{op}}\sqrt{2N_l}, \label{eq:epsdef} \\
    \sigma_l &= \frac{\sqrt{2}\,\varepsilon_l}{\gamma_t} + \eta. \label{eq:sigma}
\end{align}
Then fixing $x_i = 0$ at node $l$ does not change the optimal value whenever:
\begin{align}
    & \varepsilon_l < \gamma_t / 2 \label{eq:safetyweyl} \\
    & (\vw_t^\intercal \vm_i)^2 
    \bar{\zeta}_t - \Gamma_t - \|\vm_i\|_2^2 \cdot \sigma_l, \label{eq:EC}
\end{align}
where $\bar{\zeta}_t = [\nabla g_{\mu_l}(\vx_t)]_{(N_l)}$ is the dual price of
the FW linear subproblem \eqref{eq:LPl} with respect to the budget
constraint.
\end{restatable}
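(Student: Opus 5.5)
The statement's condition \eqref{EC} is garbled; I read it as $(\vw_t^\intercal \vm_i)^2 < \bar{\zeta}_t - \Gamma_t - \|\vm_i\|_2^2\,\sigma_l$. The plan is a reduced-cost argument on the linear relaxation of the Frank-Wolfe subproblem over the hypersimplex $\mathrm{conv}(\Xset_l)$. By LP duality for that subproblem, with budget price $\bar{\zeta}_t$ and nonnegative bound multipliers $\alpha_i=\max(0,\bar\zeta_t-[\nabla g_{\mu_l}(\vx_t)]_i)$, every feasible $\vx$ satisfies
\begin{align*}
\innp{\nabla g_{\mu_l}(\vx_t)}{\vx-\vx_t} \leq \Gamma_t - \sum_i \alpha_i x_i .
\end{align*}
This mirrors \cref{prop:tightening} and \cref{lem:dual_solution_from_primal}, with the smoothed gradient playing the role of the score vector $\vvv$. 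By concavity of $g_{\mu_l}$, any point with $x_i=1$ then has
\begin{align*}
g_{\mu_l}(\vx)\le g_{\mu_l}(\vx_t)+\Gamma_t-\alpha_i .
\end{align*}
So $x_i=1$ is excluded as soon as $\alpha_i$ exceeds the slack between the certified bound $\bar g_l$ and the value needed to beat the incumbent.

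Concretely, I would carry out three steps in order.

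\emph{Step 1 (reduced cost).} I would write down the dual of the FW LP at $\vx_t$ and derive the displayed concavity inequality above. This yields: if $[\nabla g_{\mu_l}(\vx_t)]_i < \bar\zeta_t-\Gamma_t$, then no optimal completion uses $x_i=1$ relative to the relaxation. Here I also have to reconcile $g_{\mu_l}$ with $g$, using \cref{eq:smoothing_sandwich} together with the safe choice of $\mu_l$ from \cref{cor:smoothnessbound} and \cref{thm:safesmoothing}. These guarantee that integer optimizers coincide, so a strict inequality transfers from $g_{\mu_l}$ to $g$.

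\emph{Step 2 (gradient versus the computed eigenvector).} I need to replace $[\nabla g_{\mu_l}(\vx_t)]_i=\innp{P_{\mu_l}(\vx_t)}{\vm_i\vm_i^\intercal}$ from \cref{lemma:gradient} by the computable quantity $(\vw_t^\intercal\vm_i)^2$, with error at most $\|\vm_i\|_2^2\sigma_l$. For this I use $|\innp{A-B}{\vm\vm^\intercal}|\le\|A-B\|_F\|\vm\|_2^2$ and the triangle inequality. The $\eta$ term covers the distance from $\vw_t\vw_t^\intercal$ to $\hat\vw_t\hat\vw_t^\intercal$. The remaining term $\sqrt2\varepsilon_l/\gamma_t$ must cover the distance from $\hat\vw_t\hat\vw_t^\intercal$ to the relevant projector.

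\emph{Step 3 (Davis--Kahan).} I interpret $\sqrt2\varepsilon_l/\gamma_t$ as a Davis--Kahan $\sin\Theta$ bound between the minimum eigenvector at $\vx_t$ and at any point $\vx$ of $\mathrm{conv}(\Xset_l)$. The perturbation satisfies
\begin{align*}
\|M_l(\vx)-M_l(\vx_t)\|_2\le\|M_l\|_{\mathrm{op}}\|\vx-\vx_t\|\le \|M_l\|_{\mathrm{op}}\sqrt{2N_l}=\varepsilon_l ,
\end{align*}
using the hypersimplex diameter. Condition \eqref{eq:safetyweyl} together with Weyl's inequality keeps the eigengap at least $\gamma_t-2\varepsilon_l>0$ along the whole segment. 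Hence the minimum eigenvalue stays simple and $g$ is differentiable there, with supergradient $[(\hat\vw^\intercal\vm_k)^2]_k$. The projector distance is then at most $\sqrt2\,\varepsilon_l/\gamma_t$ (Frobenius norm, rank one).

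\emph{Where I expect difficulty.} The main obstacle is making Step 3 consistent with Step 1. The soft-min density $P_{\mu_l}$ is not the rank-one projector, and Davis--Kahan with denominator $\gamma_t$ rather than $\gamma_t-\varepsilon_l$ needs care. I would first try to run Step 1 directly on the nonsmooth $g$, using the exact-eigenvector supergradient at $\vx_t$. I would use the smoothed gradient only to define $\bar\zeta_t$ and $\Gamma_t$, and absorb the difference into $\sigma_l$. If the constants do not close, I would accept the slightly weaker denominator $\gamma_t-\varepsilon_l\ge\gamma_t/2$, which \eqref{eq:safetyweyl} provides.
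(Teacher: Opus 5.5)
Your Step 1 does not deliver the conclusion, and Steps 2--3 inherit the mismatch. The reduced-cost inequality $g_{\mu_l}(\vx)\le g_{\mu_l}(\vx_t)+\Gamma_t-\sum_j\alpha_j x_j$ is correct. With threshold $\alpha_i>\Gamma_t$, however, it only says that points with $x_i=1$ are worse than the current iterate $\vx_t$.

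\begin{itemize}
\item For an optimizer $\vx^*$ of the smoothed relaxation you have $g_{\mu_l}(\vx^*)\ge g_{\mu_l}(\vx_t)$, hence $\alpha_i x^*_i\le\Gamma_t$, i.e.\ $x^*_i\le\Gamma_t/\alpha_i<1$. That is not $x^*_i=0$, so fixing $x_i=0$ can still change the relaxation value.
\item Nor does it exclude integer completions with $x_i=1$. The value $g_{\mu_l}(\vx_t)$ is not a lower bound on the best completion, so you would need $\bar{g}_l-\alpha_i\le\hat{G}$ for an incumbent $\hat{G}$, as in \cref{prop:tightening}. The theorem's condition contains no incumbent.
\item Invoking \cref{cor:smoothnessbound} does not repair this. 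Coincidence of the integer optimizers of $g$ and $g_{\mu_l}$ says nothing about this inequality.
\end{itemize}

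There is a further mismatch in Steps 2--3. Your whole chain lives at $\vx_t$, so the only approximation error you need is between $P_{\mu_l}(\vx_t)$ and $\vw_t\vw_t^\intercal$. Besides the $\eta$ term, that error is governed by the spectral tail weight, not by $\sqrt{2}\varepsilon_l/\gamma_t$: by \cref{lemma:spectraltailweight}, $\|P_{\mu_l}(\vx_t)-\hat{\vw}_t\hat{\vw}_t^\intercal\|_*=2\tau_1(\vx_t)\le 2(n-1)e^{-\gamma_t/\mu_l}$. The Davis--Kahan step therefore has nothing to attach to, which is why your constants do not close.

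The missing idea is to argue at the relaxation optimizer $\vx^*_{\mathrm{rel}}$ itself, through the KKT conditions of \eqref{eq:SDPPrimal}/\eqref{eq:SDPDual}; this is what the paper does.

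\begin{itemize}
\item Stationarity in $x_i$ and complementary slackness give $\innp{Z^*}{\vm_i\vm_i^\intercal}=\zeta^*-\alpha_i^*+\beta_i^*$ with $\alpha_i^*(x^*_{\mathrm{rel}})_i=0$. So $\innp{Z^*}{\vm_i\vm_i^\intercal}<\zeta^*$ forces $(x^*_{\mathrm{rel}})_i=0$ exactly.
\item Condition \eqref{eq:safetyweyl} with Weyl makes $\lambda_{\min}(M_l(\vx^*_{\mathrm{rel}}))$ simple. SDP complementary slackness then pins down $Z^*=\vw^*_{\mathrm{rel}}\vw^{*\top}_{\mathrm{rel}}$.
\item Davis--Kahan between $M_l(\vx_t)$ and $M_l(\vx^*_{\mathrm{rel}})$, with perturbation at most $\varepsilon_l$ by the hypersimplex diameter, plus $\eta$, is exactly where $\sigma_l$ comes from. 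It gives $(\vw^{*\top}_{\mathrm{rel}}\vm_i)^2\le(\vw_t^\intercal\vm_i)^2+\|\vm_i\|_2^2\sigma_l$.
\item $\Gamma_t$ enters not as a reduced-cost slack but through \cref{lem:dual_price}, $\zeta^*\ge\bar{\zeta}_t-\Gamma_t$.
\end{itemize}

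Your worry about the Davis--Kahan denominator is fair. The available separation is $\lambda_2(M_l(\vx_t))-\lambda_1(M_l(\vx^*_{\mathrm{rel}}))\ge\gamma_t-\varepsilon_l$, whereas the paper writes $\gamma_t$. That only costs a constant factor, though, and is not the structural issue.
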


\cref{thm:exclusion} can be leveraged together with the simplicity condition from \cref{prop:simplicityeigengap} to ensure the positive eigengap condition holds.

\section{Computational experiments}\label{sec:computational_experiments}

We assess the computational performance of the proposed approach within the \textsc{Boscia.jl}
framework \citep{hendrych2023convex,BosciaGitHub} which is based on a branch-and-bound algorithm utilizing Frank-Wolfe methods for concave relaxations.

\paragraph*{Adaptations of \textsc{Boscia}.}
To facilitate the use of our approach, a \emph{smoothing mode} was added to the framework. 
The user provides the original function and a function to compute subgradients at the a given point.
Additionally, the user provides a function which generates the smoothed objective and its gradient based on a given 
smoothing parameter $\mu$ and an optional node tolerance $\varepsilon$.
Further, an initial smoothing parameter $\mu_0$, a minimum smoothing parameter $\mu_{\min}$ and the decay factor can be provided.
The upper bound at the given node is computed as the minimum of the upper bound reported by FW from the relaxed smoothed problem and the upper bound of the original function computed from the smallest dual gap computed from the subgradients.
The implementation uses Boscia's minimization convention and therefore passes the objectives
$-g$ and $-g_\mu$, together with their negated (sub)gradients, to the solver.
All mathematical statements in this paper use the equivalent concave-maximization convention, and objective values shown in the paper are transformed back to that convention.
Lastly, there is a resolve feature which solves the current node problem with a tighter smoothing parameter starting from the current solution.
If the smoothing parameter is too large, the smoothed problem can converge to an integer solution which is suboptimal for the original problem. 
Thus, a small resolve is performed to verify the integer solution.
Note that the features like pruning and fixing can be done via the native callbacks in \textsc{Boscia.jl} and do not require
additional adaptation of solver source code itself. 

\paragraph*{Alternative methods for constrained eigenvalue optimization.}
We compare our approach to the open-source solver \textsc{SCIP-SDP}~\citep{gally2018framework} for mixed-integer SDP.
\textsc{SCIP-SDP} can execute either a nonlinear branch-and-bound solving an SDP at each node or an outer approximation algorithm in which eigenvector cuts are generated to separate non-PSD matrices.
We refer to the former as \textsc{SCIPSDP (B\&B)} and to the latter as \textsc{SCIPSDP (OA)} in the evaluations.

\subsection*{Considered problems}

\paragraph*{E-Optimal Experiment Design (EOD).} Given a set of $m$ experiments encoded in the matrix $A^{m \times n}$, where $n$ represents
the number of model parameters, and a budget $N \geq n$ which is assumed to be significantly smaller than $m$,
we want to maximize the minimum eigenvalue of the information matrix $X(\vx) = A^\intercal \Diag(\vx) A$ under the 
constraint $\sum_{i=1}^m x_i = N$ and where $\vx \in \{0,1\}^m$ is the selection vector of the experiments.
This is equivalent to minimize the worst variance in the model parameters and can be graphically interpreted as finding a minimum volume enclosing ellipsoid which is as close to a ball as possible.
We generate the two experiment data sets randomly, in one of the sets we enforce correlation between the experiments, in the other, the experiments are independent from each other. Considered dimensions are $50,80,100,120,150$ and $n$ is the square root of the dimension rounded down. For the budget, we consider two constructions, one is $N=1.5n$ and the other is $N=1.5n\log(n)$.
We will refer to this problem as EOD in the evaluations.

\paragraph*{Maximum Algebraic Connectivity (AGC).} Given a graph $G=(V,E)$ with $n$ vertices and $m$ edges and potential new
edges $R \subseteq V \times V\setminus E$, we want to maximize the algebraic connectivity of the graph under a budget $N$ on the number of additional edges.
As previously discussed, the algebraic connectivity is measured by the second smallest eigenvalue of the Laplacian matrix
of the graph. However, it can be transformed into a minimum eigenvalue problem by adding the all-ones matrix to the Laplacian matrix.
We also generate the graphs randomly; one set has connected base graphs and the other has disconnected base graphs.
The dimension denotes the number of potential new edges and ranges from $80$ to $200$. 
The number of nodes is a third of the dimension and the budget is half of the dimension.
This problem will be referred to as AGC in the evaluations.

\paragraph*{Algebraic Connectivity Spanning Tree (ACST).} Given a weighted graph $G=(V,E,W)$ with $n$ vertices and $m$ edges, we want to find a spanning tree of the graph which maximizes its algebraic connectivity:
\begin{align}
    \max_{\vx} \; &\lambda_2(L_{G(\vx)}) \tag{ACST} \label{eq:ACST} \\
    \text{s.t. } \; & G(\vx) \text{ is a spanning tree of } G, \nonumber 
\end{align}
where we note $G(\vx)$ the subgraph induced by set of edges whose incidence vector is $\vx$.
As stated in \citet{somisetty2024optimal}, the problem is $NP$-hard.
They used a Mixed Integer Linear Formulation with additional cutting planes to solve the problem.
We do not compare to their solution method since its primary motivation was to avoid building a MISDP solver from scratch but we use their formulation of the problem for \textsc{SCIP-SDP}.
\begin{align}
    \max_{\lambda,W,\vx} & \; \lambda \tag{SDP} \\
    \text{s.t. } & \; W \succeq 0 \nonumber \\
    & \; W_{ii} = \sum_{(i,j) \in E} w_{ij} x_{ij} - \frac{\lambda(n-1)}{n} \quad \forall i \in V \nonumber\\
    & \; W_{ij} = W_{ji} = -w_{ij}x_{ij} + \lambda/n \quad \forall (i,j) \in E \nonumber \\
    & \; \vx \in \text{spanning tree of } G \nonumber
  \end{align}
The authors \citet{somisetty2024optimal} point out that relaxing the spanning tree constraint to a simple budget constraint ($N =|V|- 1$)
still yields a spanning tree as the optimal solution by optimality, since any disconnected graph yields a zero objective.
Thus, we use this fact in the formulation that \textsc{SCIP-SDP} solves.
Further, we use the experiment set of \citet{somisetty2024optimal} which can be found on GitHub as part of the \textsc{LaplacianOpt.jl} package \citep{LaplacianOptGitHub}, though we only consider the number of nodes from 10 to 25.

\subsection*{Values of $\mu$ and the implementation of the smoothed gradient}

The actual performance of the proposed approach depends on the choice of the smoothing parameter $\mu$.
For the runs without instance-dependent scaling, we use the schedules summarized in
\cref{tab:mu_schedules}. At each decay step, the smoothing parameter is updated as
$\mu \leftarrow \max\{\rho^{d-1}\mu,\mu_{\min}\}$, where $\rho$ denotes the decay factor and $d$ the depth of the current node.
Here, $m$ is the number of candidate experiments or edges and $n$ is the number of
vertices in the spanning-tree instances.

\begin{table}[htbp]
    \centering
    \small
    \begin{tabular}{lccc}
        \toprule
        Criterion & Initial $\mu_0$ & Minimum $\mu_{\min}$ & Decay $\rho$ \\
        \midrule
        EOD & $m/10$ & $10^{-20/m}$ & $0.9$ \\
        AGC connected
            & $m/200$
            & $\begin{cases}10^{-300/m},&m\in\{80,100\},\\10^{-400/m},&m\in\{150,200\}\end{cases}$
            & $\begin{cases}0.9,&m\in\{80,100\},\\0.7,&m\in\{150,200\}\end{cases}$ \\
        AGC disconnected
            & $m/100$
            & $\begin{cases}10^{-300/m},&m\in\{80,100\},\\10^{-400/m},&m\in\{150,200\}\end{cases}$
            & $0.9$ \\
        ACST/ACSTS
            & $n/25$
            & $\max\{10^{-4},10^{-\min\{80,m\}/m}\}$
            & $0.8$ \\
        \bottomrule
    \end{tabular}
    \caption{Fixed smoothing schedules used for the different criteria.}
    \label{tab:mu_schedules}
\end{table}
As in seen in \cref{fig:Trajectory_smallest_eigenvalue}, the actual scale of the eigenvalues can differ quite
a lot between the different problem classes. Thus, we have another $\mu$ schedule which depends on the magnitude of the minimum eigenvalue
\begin{align}
    \label{eq:mu_schedule_adaptive}
    \mu_0 = 0.15 * \hat{\mu} \quad \mu_{\min} = 0.003 * \hat{\mu}
\end{align}
where $\hat{\mu}$ is the average minimum eigenvalue computed over a few sample designs.
The decay factor remains as specified in \cref{tab:mu_schedules}.

Regarding the stable implementation of the smoothed gradient, the exponent terms in the gradient can grow
large quickly even for moderate sizes of $\lambda$. For a numerical stable implementation, 
we define $K = \sum_{j=1}^n e^{ -\lambda_j / \mu }$ and $\omega_i = \frac{e^{ -\lambda_i / \mu }}{K}$. Then,
the gradient entries can be written as
\begin{align*}
    \left[\nabla g_{\mu}(\vx)\right]_k &= \sum_{i=1}^n \omega_i \innp{\vu_i}{\vm_k}^2. \\
    \intertext{Using the exponent rules, the $\omega_i$ can be computed as}
    \omega_i &= \exp(-\lambda_i / \mu - \log(K)).
\end{align*}
Julia's LogExpFunctions.jl package provides a logsumexp function that can be used to compute the gradient
entries in a numerically save manner.

\subsection*{Results}
The computations were performed on a machine with an Intel(R) Xeon(R) Gold 6342 CPU @ 2.80GHz and 512 GB of RAM.
The package versions on the Julia side are \textsc{Boscia.jl} v0.2.10 (branch \texttt{smoothing-mode}), \textsc{SCIP-SDP} version of the 
\texttt{main} branch on the 31st of March 2026, \textsc{SCIP} v10.0.1, \textsc{SCIP.jl} v0.12.8, \textsc{CombinatorialLinearOracles.jl} v0.1.5 and \textsc{FrankWolfe.jl} v0.6.2.
\textsc{SCIP-SDP} is configured with \textsc{SCIP} v10.0.1 and \textsc{Mosek} v10.2\footnote{Due to a recent and necessary bug fix in SCIP-SDP, we use commit \texttt{bf9caa0468d59b27c5f1b9becc8772e976747eb4} on the \texttt{main} branch of \texttt{SCIP-SDP}.}.
For EOD and AGC, we use the \emph{Decomposition Invariant Conditional Gradient (DICG)} \citep{garber2016linear} and
for ACST, the \emph{Blended Pairwise Conditional Gradient (BPCG)} \citep{tsuji2021sparser}.
The source code of the experiment is publicly available on Github, \url{https://github.com/ZIB-IOL/OptimalDesignWithBoscia/tree/e-optimal-arxiv}.

A summary of the results over all problems can be seen in \cref{tab:all_criteria_summary}.
More detailed results can be found in \cref{tab:by_dimension_(correlated),tab:by_dimension_(independent),tab:avg_by_dimension_connected,tab:avg_by_dimension_disconnected,tab:spanning_tree} in the \cref{app:comp_appendix}.
The results indicate that the proposed approach is particularly effective when additional structure constraints are present and for large dimensional problems.
On the instances with simpler structure and moderate dimensionality, \textsc{SCIP-SDP} outperforms our approach as it has tighter relaxations.

\begin{table}[htbp]
    \centering
    \caption{Summary over all problems. The geometric mean of the time is computed with a 1s shift. The geometric mean of the relative gap is only computed for the unsolved instances of that solver. \\
    For EOD with both data types, we use the adaptive $\mu$ scaling in \cref{eq:mu_schedule_adaptive}. For AGC with a connected base graph, we use the standard $\mu$
    schedule in \cref{tab:mu_schedules} and eigenvalue-based pruning. For AGC with a disconnected base graph, we use the adaptive $\mu$ scaling, eigenvalue-based pruning
    and truncate the gradient to use half of the eigenvalue spectrum. Lastly, for ACST, we utilize the rank-based pruning and also truncate the gradient to use half of the eigenvalue spectrum.}
    \label{tab:all_criteria_summary}
    \begin{tabular}{llrrrrr}
\toprule
 Solver & Metric & EOD corr. & EOD ind. & AGC corr. & AGC ind. & ACST \\
\midrule
\multirow{3}{*}{Boscia} & \% sol. & 50.0 & 24.0 & \textbf{100.0} & 20.0 & \textbf{14.3} \\
  & time (s) & 845.9 & 1906.6 & 9.3 & 1352.5 & \textbf{2227.5} \\
  & rel. gap & 1.83e-01 & 1.80e-01 & --- & 1.21e-01 & \textbf{8.46e+00} \\
\midrule
\multirow{3}{*}{SCIPSDP (OA)} & \% sol. & \textbf{58.0} & 30.0 & \textbf{100.0} & 10.0 & 0.0 \\
  & time (s) & \textbf{217.6} & 1080.1 & \textbf{0.7} & 2370.2 & 3600.2 \\
  & rel. gap & 1.93e-01 & 2.63e-01 & --- & 1.28e-01 & 7.47e+15 \\
\midrule
\multirow{3}{*}{SCIPSDP (B\&B)} & \% sol. & 50.0 & \textbf{54.0} & \textbf{100.0} & \textbf{45.0} & 0.0 \\
  & time (s) & 736.5 & \textbf{658.8} & 50.3 & \textbf{1152.4} & 3600.0 \\
  & rel. gap & \textbf{6.64e-02} & \textbf{1.27e-01} & --- & \textbf{6.89e-02} & 2.23e+04 \\
\bottomrule
\end{tabular}

\end{table}

\cref{fig:E_correlated_scipsdp,fig:AGC_correlated_scipsdp,fig:E_independent_scipsdp,fig:AGC_independent_scipsdp} display the percentage of solved instances over time for the different solvers.
Interestingly, \textsc{SCIPSDP (B\&B)} is the slowest in the smallest instances but it catches up later.
Note that overall, \textsc{SCIPSDP (B\&B)} performs better than \textsc{SCIPSDP (OA)} which is expected as the SDP relaxation yields the tightest bounds.

\begin{figure}
    \centering
    \begin{subfigure}{0.49\textwidth}
        \centering
        \includegraphics[width=\textwidth]{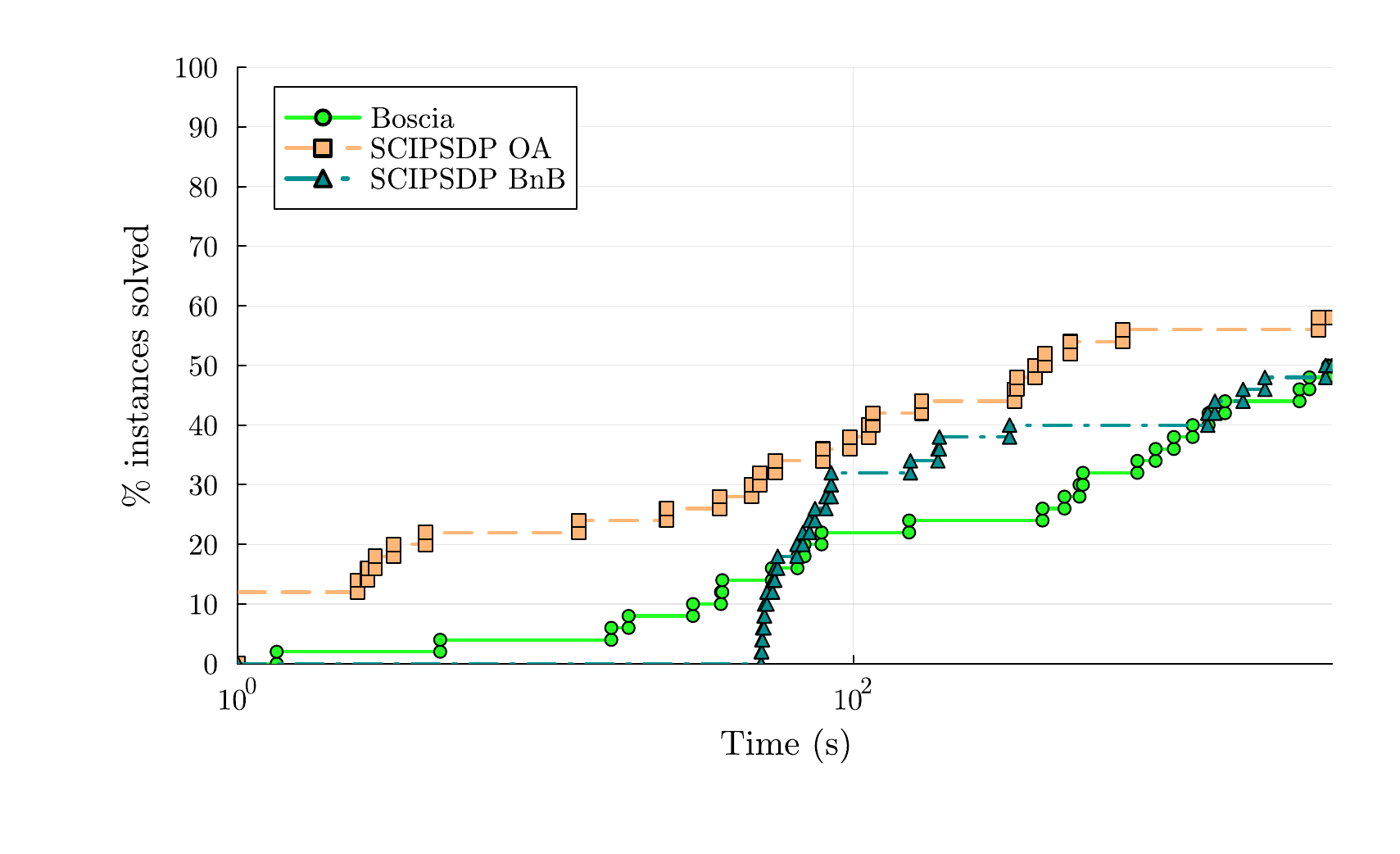}
        \caption{Termination over time for the E-Optimal Design Problem with correlated instances}
        \label{fig:E_correlated_scipsdp}
    \end{subfigure}
    \hfill
    \begin{subfigure}{0.49\textwidth}
        \centering
        \includegraphics[width=\textwidth]{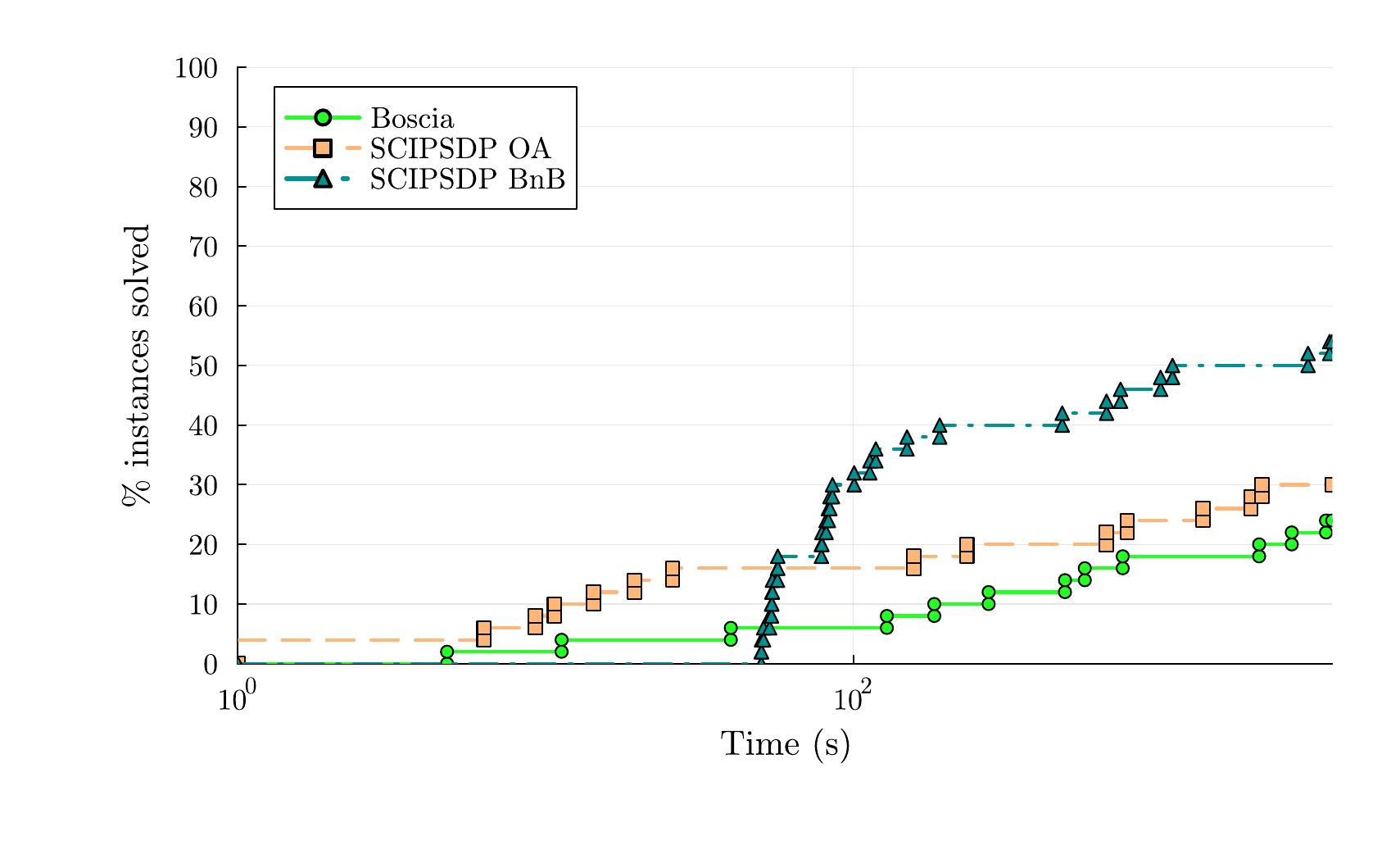}
        \caption{Termination over time for the E-Optimal Design Problem with independent instances}
        \label{fig:E_independent_scipsdp}
    \end{subfigure}
    \caption{Termination over time for the E-Optimal Design Problem under both data types.}
\end{figure}

\begin{figure}
    \centering
    \begin{subfigure}{0.49\textwidth}
        \centering
        \includegraphics[width=\textwidth]{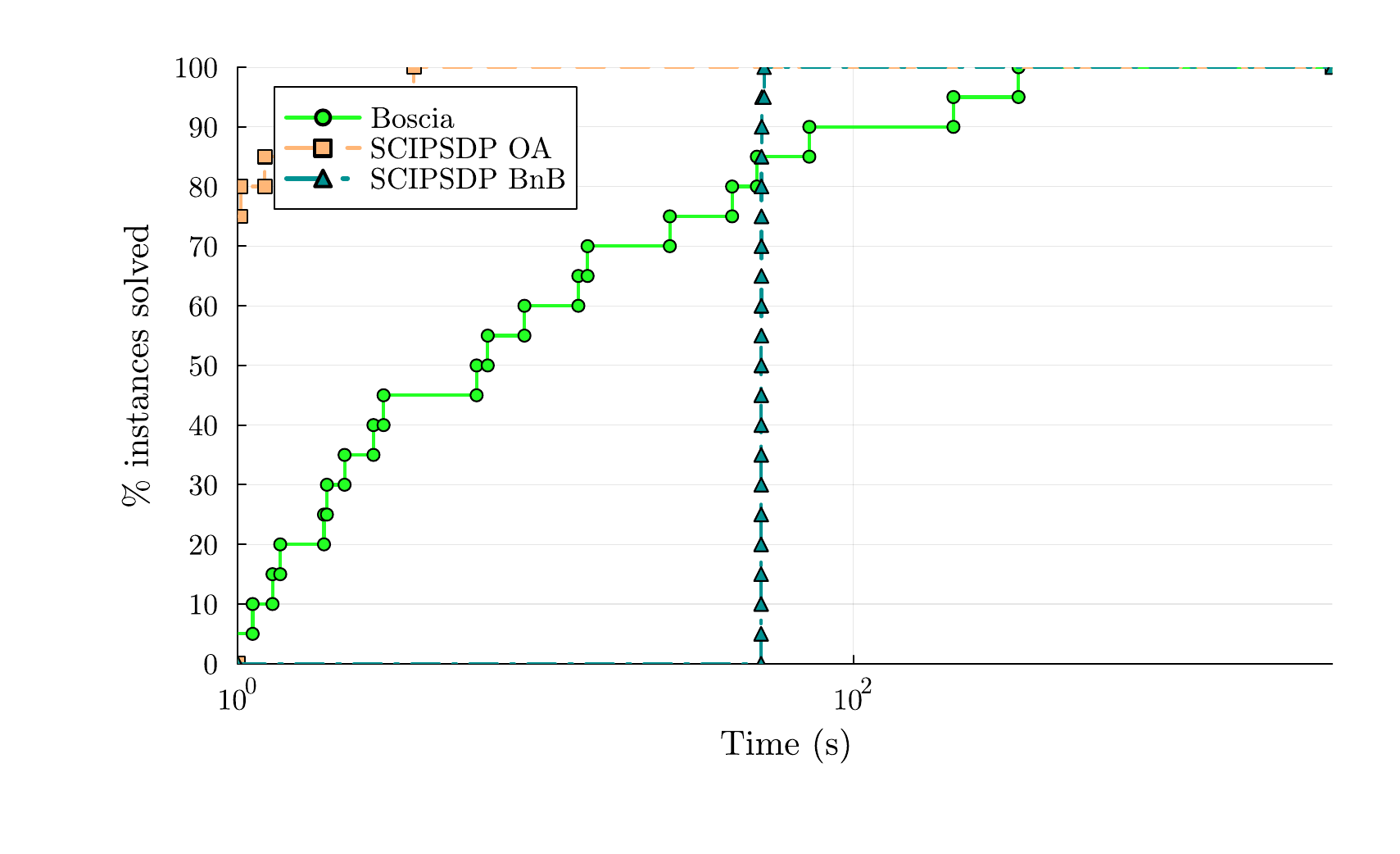}
        \caption{Termination over time for the Maximum Algebraic Connectivity Problem with connected base graphs.}
        \label{fig:AGC_correlated_scipsdp}
    \end{subfigure}
    \hfill
    \begin{subfigure}{0.49\textwidth}
        \centering
        \includegraphics[width=\textwidth]{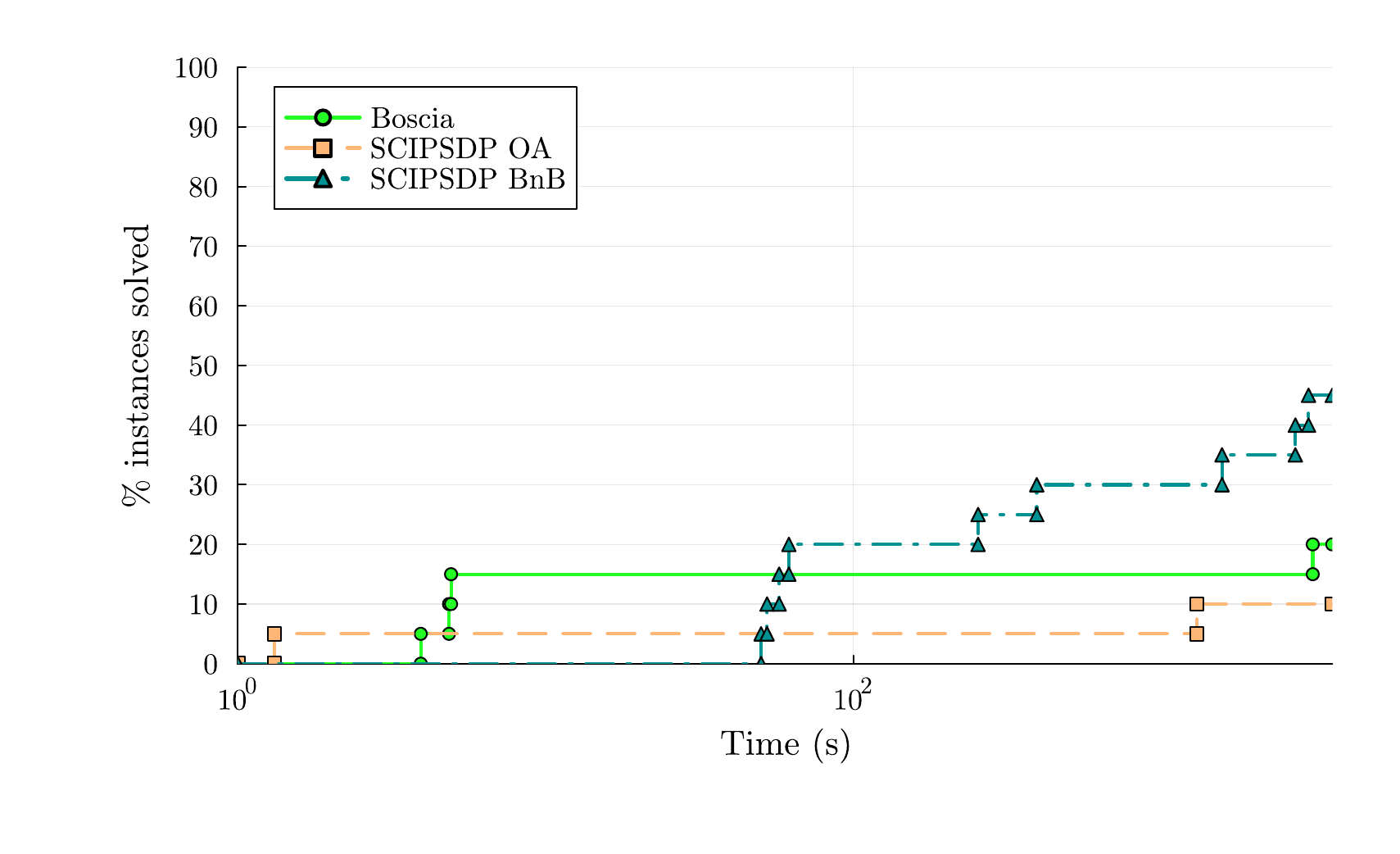}
        \caption{Termination over time for the Maximum Algebraic Connectivity Problem with disconnected base graphs.}
        \label{fig:AGC_independent_scipsdp}
    \end{subfigure}
    \caption{Termination over time for the Maximum Algebraic Connectivity Problem with connected and disconnected base graphs.}
\end{figure}

A further curiosity is that \textsc{SCIPSDP (OA)} performs better on the correlated instances of EOD and connected instances of AGC, similarly to our approach.
To understand the difference in performance for our approach, we display the trajectory of the smallest eigenvalue during the B\&B tree for a problem with 30 variables, see \cref{fig:Trajectory_smallest_eigenvalue}.
On the left side, the experiment set is correlated, on the right side, it is independent.
The black line denotes the optimal solution which is found at the root node in the correlated case and after 150 nodes in the independent case.
We can observe that more nodes can be potentially pruned in the correlated case, obviously speeding up the search.
The range of the eigenvalue is also larger for the independent data.
If all experiments are independent, then there is a larger set of candidate subsets which would span the experiment space,
potentially all providing similar information gains.
This impedes the proving of the optimal solution.
In fact, our approach finds the optimal solution early on and takes most of its time to prove its optimality.

\begin{figure}
    \centering
    \includegraphics[width=0.9\textwidth]{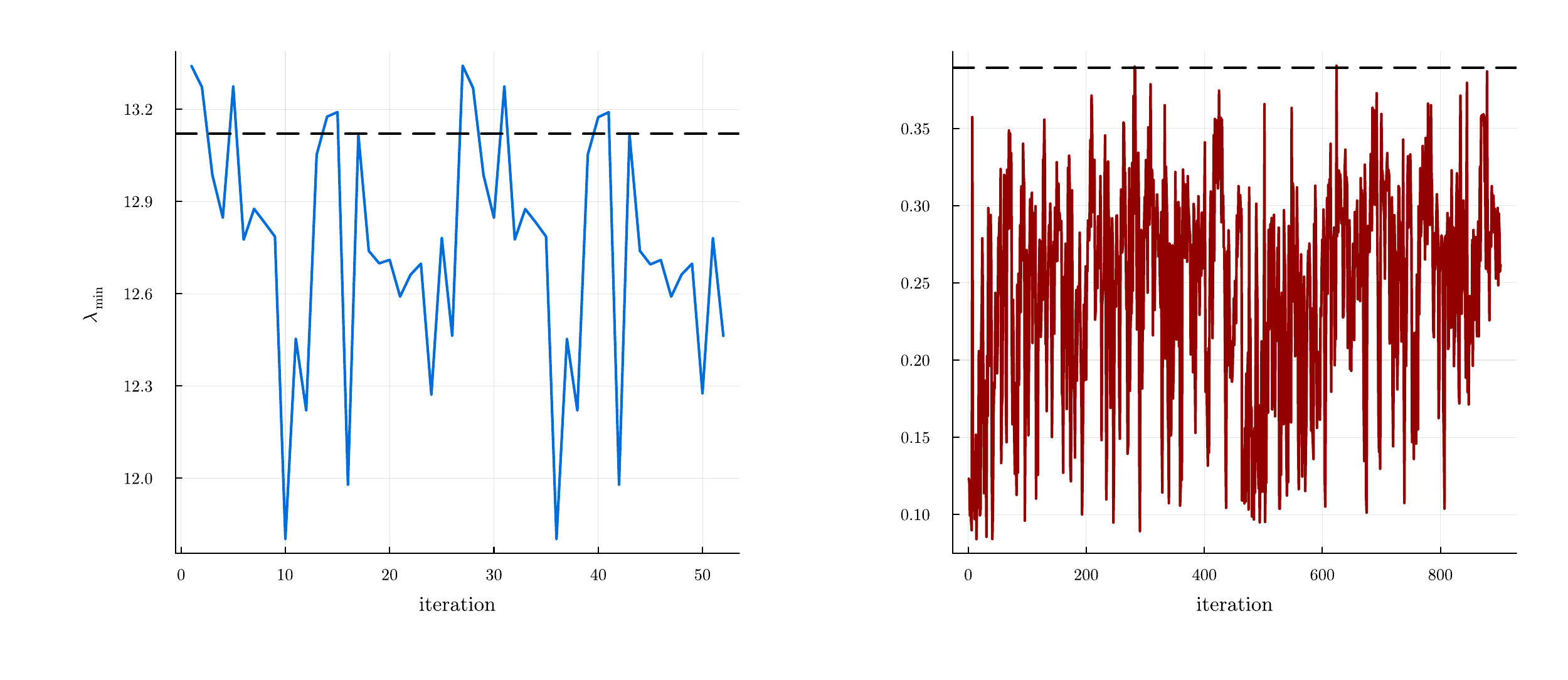}
    \caption{Sample trajectory of the smallest eigenvalue during the B\&B tree for a problem
    with 30 variables. On the left side, the experiment set is correlated, on the right side, 
    it is independent. The black line denotes the optimal solution.}
    \label{fig:Trajectory_smallest_eigenvalue}
\end{figure}
This points to the upper bound computation being the main bottleneck. 
As previously discussed, the SDP formulation is the tightest relaxation for the problem, as illustrated by the
incumbent and dual-bound trajectories in \cref{fig:progress_bounds}.

\begin{figure}
    \centering
    \begin{subfigure}{0.49\textwidth}
        \centering
        \includegraphics[width=\textwidth]{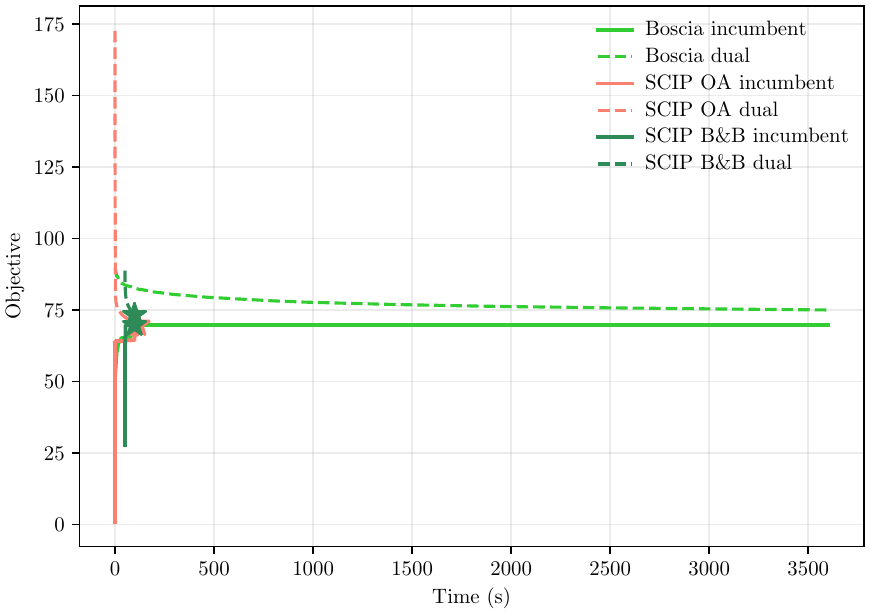}
        \caption{EOD with correlated data}
    \end{subfigure}
    \hfill
    \begin{subfigure}{0.49\textwidth}
        \centering
        \includegraphics[width=\textwidth]{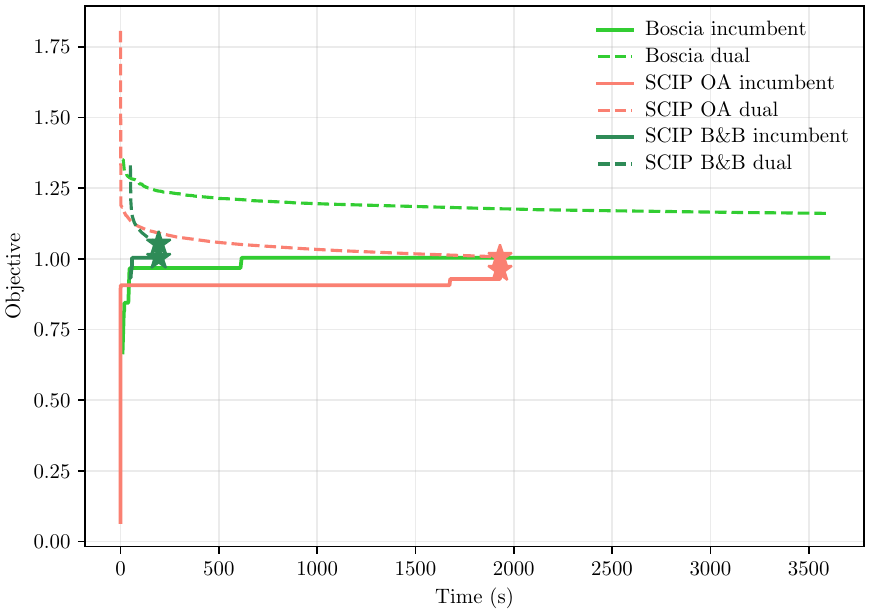}
        \caption{EOD with independent data}
    \end{subfigure}

    \begin{subfigure}{0.49\textwidth}
        \centering
        \includegraphics[width=\textwidth]{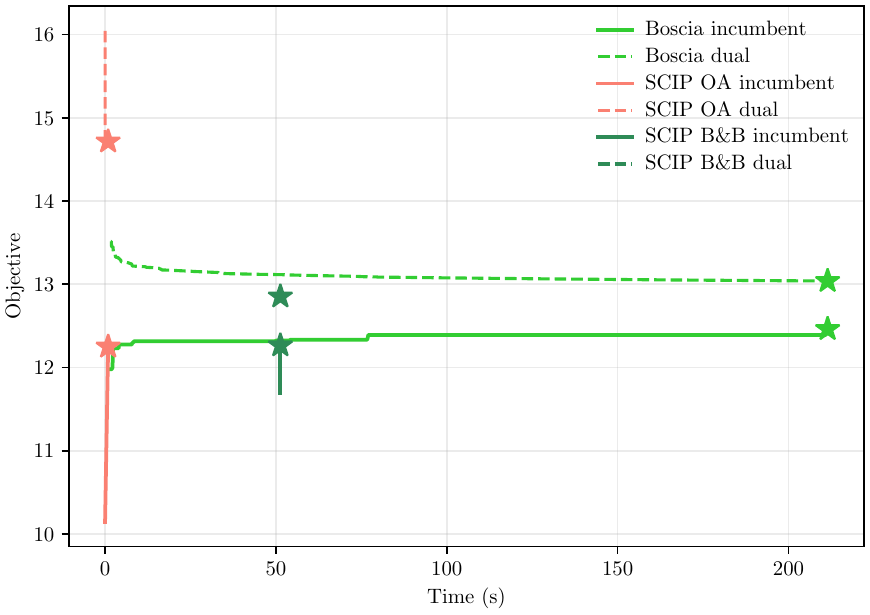}
        \caption{AGC with connected base graph}
    \end{subfigure}
    \hfill
    \begin{subfigure}{0.49\textwidth}
        \centering
        \includegraphics[width=\textwidth]{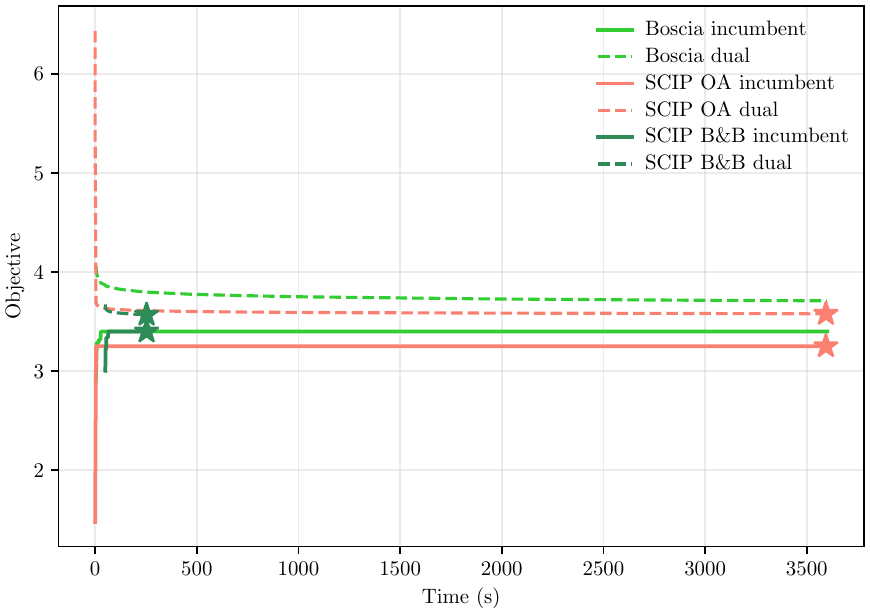}
        \caption{AGC with disconnected base graph}
    \end{subfigure}
    \caption{Progress of incumbent values and dual bounds for representative EOD and AGC instances on 80 variables and a budget of 12 experiments/edges.
    The star denotes the end of the solve for the corresponding solver.}
    \label{fig:progress_bounds}
\end{figure}
Note that Boscia finds the optimal solution early on and takes most of its time to prove its optimality.
However, the upper bound progress is much slower compared to SCIP-SDP (B\&B) and SCIP-SDP (OA).
The choice of the smoothing parameter has a huge effect on the performance speed of the approach. 
Our relaxation quality depends heavily on the magnitude of the smoothing parameter as showcased in \cref{fig:mucomparison}.
The figure displays the progress of the relative gap over the solve for different smoothing parameters for a chosen instance of EOD and AGC.
While in general a smaller initial smoothing parameter and faster decay rate is beneficial, this does not seem to be a universal rule.
\begin{figure}
    \centering
    \begin{subfigure}{0.49\textwidth}
        \centering
        \includegraphics[width=\textwidth]{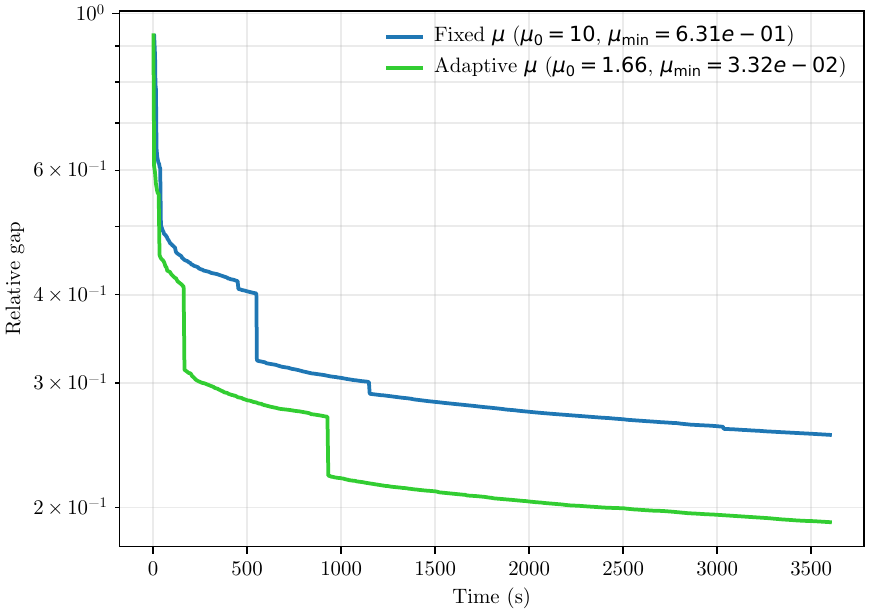}
        \caption{EOD with correlated data,}
    \end{subfigure}
    \hfill
    \begin{subfigure}{0.49\textwidth}
        \centering
        \includegraphics[width=\textwidth]{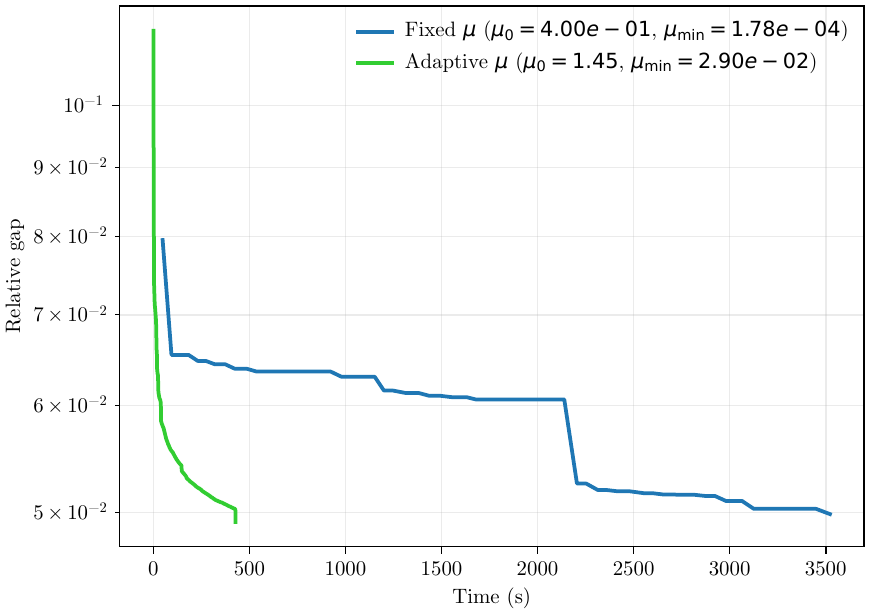}
        \caption{AGC with connected base graph.}
    \end{subfigure}
    \caption{Trajectory of the relative gap under different smoothing parameters for EOD and AGC.}
    \label{fig:mucomparison}
\end{figure}

The image shifts in larger dimensions as seen in \cref{fig:rel_gap_large_dim}. 
While SCIP-SDP (B\&B) still achieves the smallest relative gap, it fails to solve the root node and provide a dual and primal bound for instances with dimensions larger than or equal to 5000.
Note that Boscia yields a relatively similar gap compared to SCIP-SDP (B\%B) but the relative gaps reported by SCIP-SDP (OA) are much bigger and more variable,
especially as the dimension increases. 
In summary, Boscia is more robust and scales better than SCIP-SDP (B\&B) while at the same time achieving a similar gap compared to SCIP-SDP (OA).
\begin{figure}
    \centering
    \begin{subfigure}{0.49\textwidth}
        \centering
        \includegraphics[width=\textwidth]{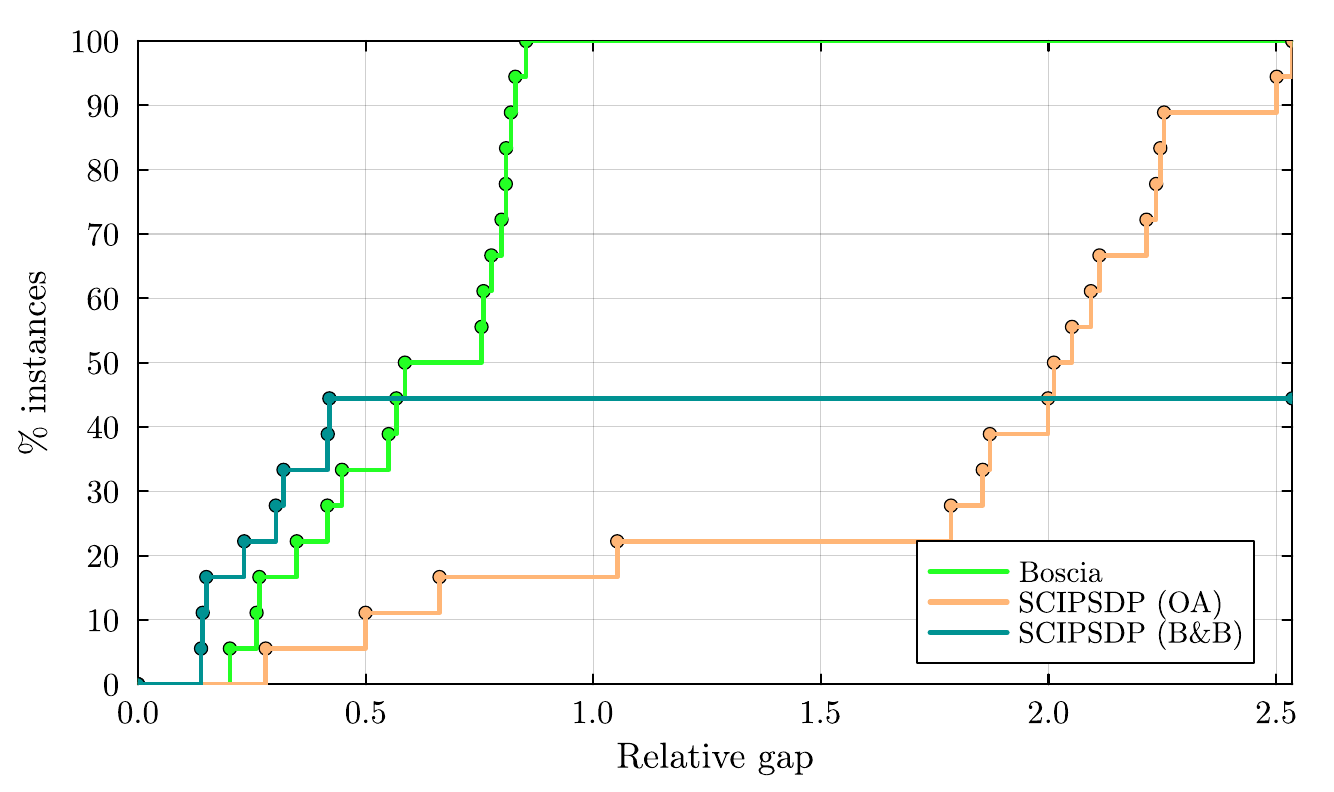}
        \caption{Correlated data and budget $N=1.5n\log(n)$}
    \end{subfigure}
    \hfill
    \begin{subfigure}{0.49\textwidth}
        \centering
        \includegraphics[width=\textwidth]{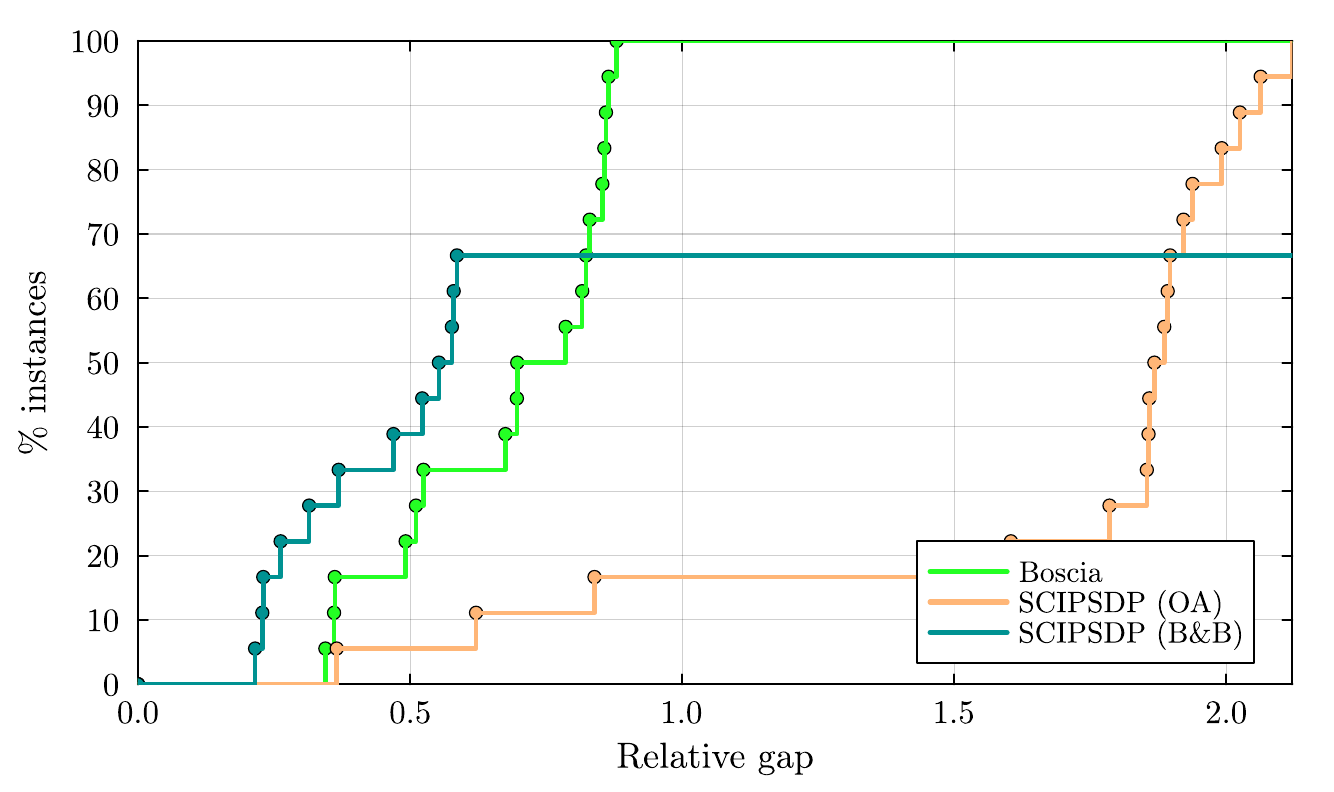}
        \caption{Independent data and budget $N=1.5n\log(n)$}
    \end{subfigure}

    \begin{subfigure}{0.49\textwidth}
        \centering
        \includegraphics[width=\textwidth]{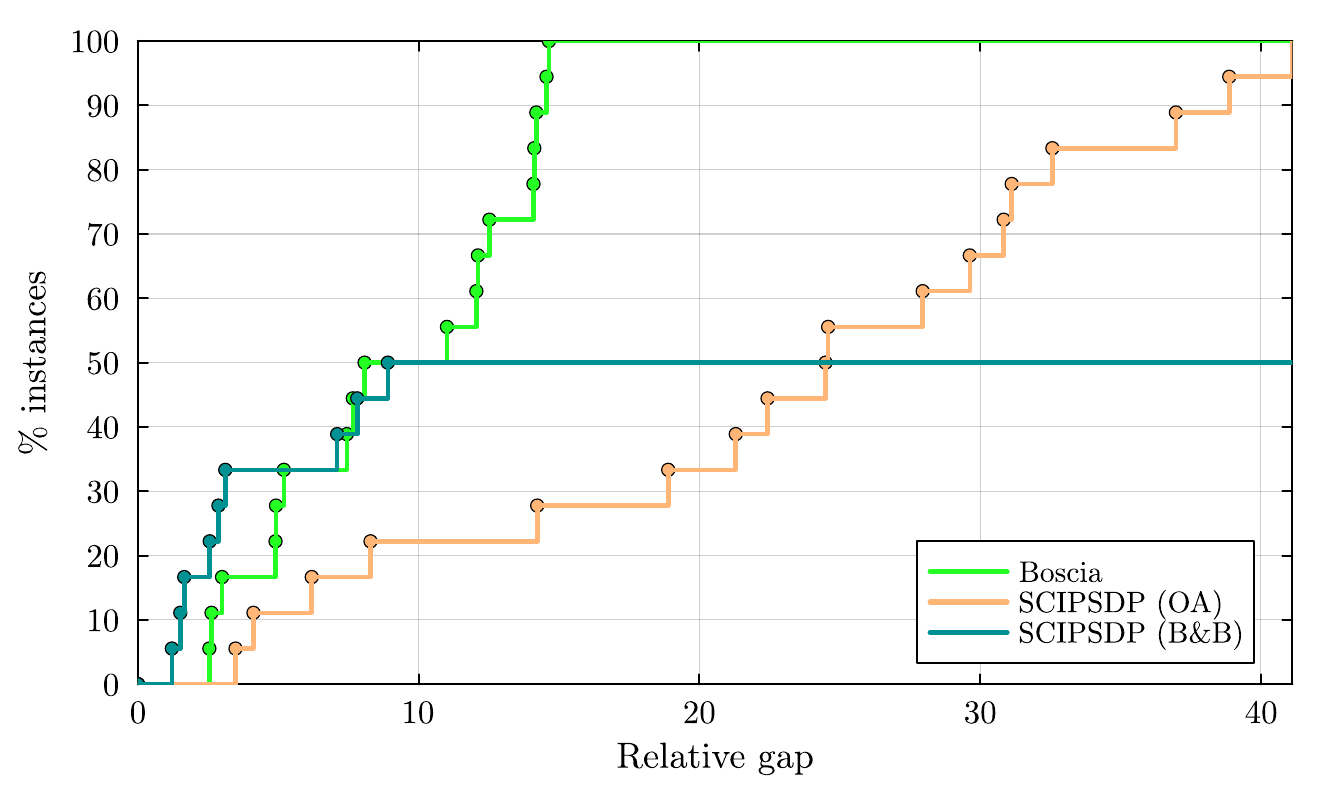}
        \caption{Correlated data and budget $N=1.5n$}
    \end{subfigure}
    \hfill
    \begin{subfigure}{0.49\textwidth}
        \centering
        \includegraphics[width=\textwidth]{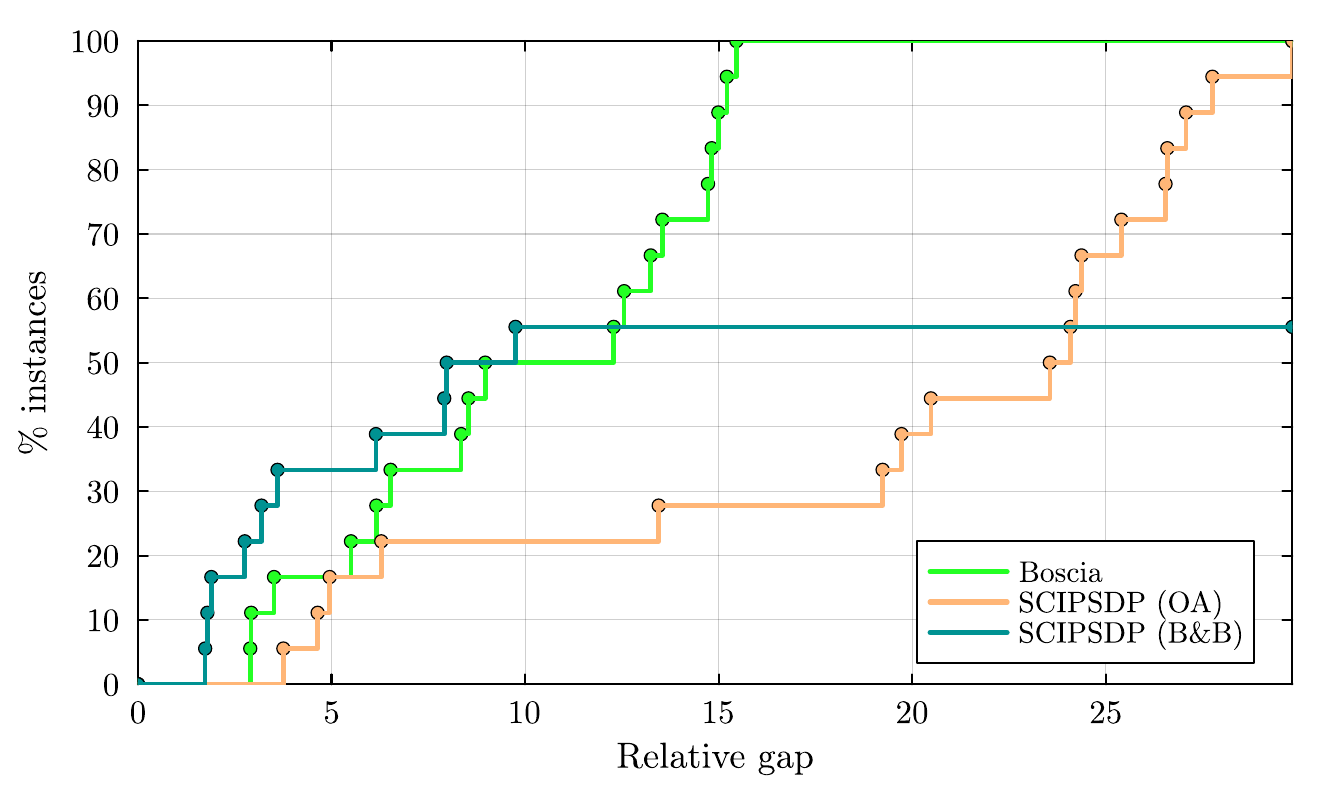}
        \caption{Independent data and budget $N=1.5n$}
    \end{subfigure}
    \caption{Cumulative distribution of the relative gaps achieved by the three solvers after 1 hour of runtime on EOD with 
    dimensions from 500 to 10000; read as "Solver S solves at least y\% of the instances with a relative gap equal to or less than x\%".
    The SCIP-SDP (B\&B) line stops below the 100 \% on the y-axis because even the root node computation cannot be completed within 1 hour and no 
    valid upper bound and incumbent can be computed.}
    \label{fig:rel_gap_large_dim}
\end{figure}

Another case in which our proposed approach is superior is the Algebraic Connectivity Spanning Tree problem.
It can explicitly encode the spanning tree constraint and LMO can be efficiently computed, for example with Kruskal's algorithm.
Note that we also tested the knapsack formulation of the problem, see \cref{fig:ACSTcomparison}, which performs worse (though it can solve at least one instance
in contrast to the two SCIP-SDP approaches).
We can conclude that the knapsack formulation is looser which in turn also explains why both \textsc(SCIPSDP) variants were 
not able to solve any instance.
This highlights that our approach is well-suited for problems with more complex structure constraints as we can decouple these from the SDP constraint.

\begin{figure}
    \centering
    \includegraphics[width=0.7\textwidth]{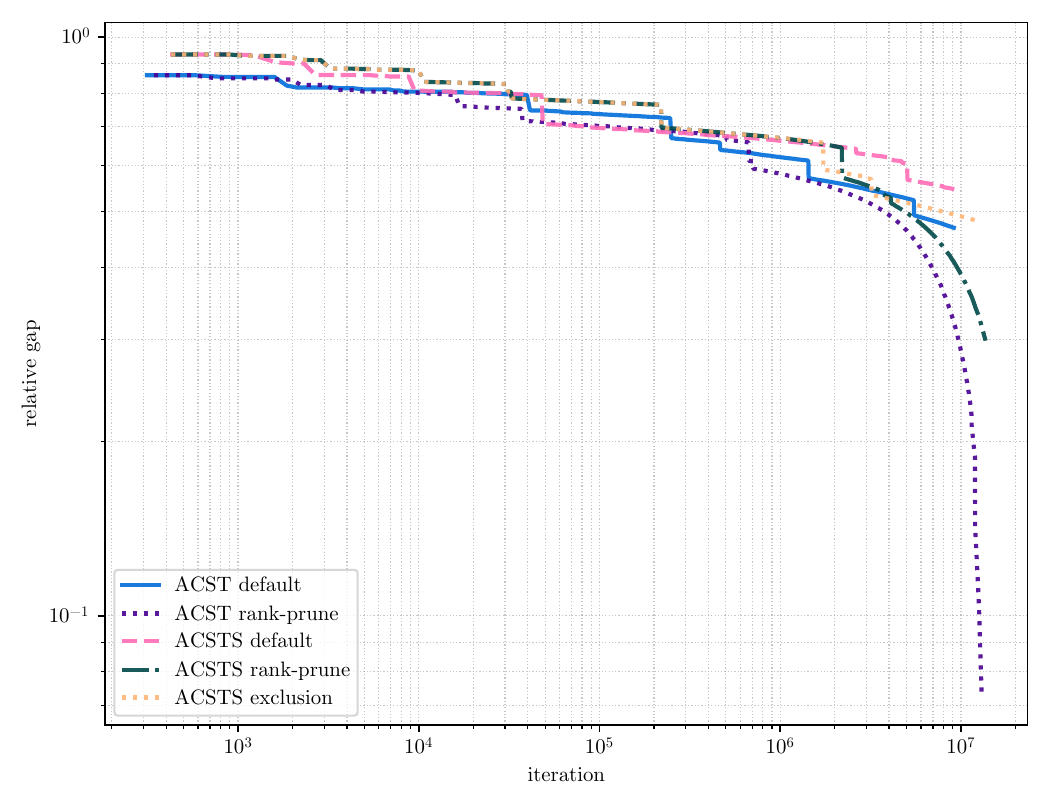}
    \caption{Trajectory of the relative gap for an instance with 12 nodes and 66 potential edges of the Algebraic Connectivity Spanning Tree problem.}
    \label{fig:ACSTcomparison}
\end{figure}

In \cref{fig:ACSTcomparison}, we also see the effect of the rank-based pruning strategy.
For this particular problem, the rank-based pruning is very reasonable since the budget is very tight and any branching
can quickly lead to a tree where a spanning tree of the required size cannot constructed anymore.

Further ablation studies can be found in the \cref{app:comp_appendix}.
The fixing routines do not exhibit any benefits, they are relatively expensive to compute and rarely lead to any actual fixings.
The pruning strategies are more effective, especially the eigenvalue-based pruning for AGC. 
The eigenvalue-based pruning does not strongly impact the number of solved instances but it improves the 
speed of the approach which points to it indeed pruning nodes but only deeper in the B\&B tree.
The rank-based pruning is not as effective for EOD and AGC, likely due to their large budget.
It is unlikely that a current node problem cannot achieve full rank, unless very deep in the B\&B tree.
If the experiments are independent of each other, any large enough subset of them will achieve full rank, further limiting the benefit of the rank-based pruning.
Truncated the gradient too much is detrimental for the performance as can be seen in \cref{tab:smoothing_ablation}. 
Only using a third of the eigenvalue spectrum yields consistently worse results. 
The correction term in the FW gap is growing too large which leads to a slower convergence.
Utilizing half the spectrum can yield better results for some problem instances.

\section{Conclusion}

We proposed a new approach for the discrete constrained eigenvalue maximization problem, combing entropic smoothing with first-order
methods within a branch-and-bound framework.
The convergence of the approach is analyzed and convergence guarantees are established even with inexact gradient information. 
Additional algorithmic improvements are derived, from fixing to pruning strategies.
The proposed approach is clearly outperforming state-of-the-art Mixed-Integer SDP solvers for problems with complex structural constraints and in large 
dimensional settings where it is more robust than SCIP-SDP relying on interior-point methods for the SDP relaxation.
Investigations into upper bound improvements and a more systematic study of the effect of the smoothing parameter and reduced spectrum in the gradient would further
improve the performance of the approach.

\section*{Acknowledgments}

Research reported in this paper was partially supported through the Research Campus Modal funded by the German Federal Ministry of Education and Research (fund numbers 05M14ZAM,05M20ZBM),
the Deutsche Forschungsgemeinschaft (DFG, German Research Foundation) under Germany's Excellence Strategy, the Berlin Mathematics
Research Center MATH+ (EXC-2046/1, EXC-2046/2, project ID: 390685689), and by the French ANR through the MIAI Cluster @ Grenoble (reference ANR-23-IACL-0006).

\section*{Statements and Declarations}

The authors declare no conflict of interest.

\bibliographystyle{icml2021}
\bibliography{references}

\newpage
\appendix
\onecolumn

\section{Proofs of the main results}\label{app:proofs}

This appendix contains the proofs of the results in the main text and supporting lemmas.

\cref{prop:eoptgeneric} identifies a strictly feasible, trace-bounded SDP with an instance of the constrained minimum-eigenvalue template.
\begin{proof}[Proof of \cref{prop:eoptgeneric}]
We introduce a single scalar variable $y$ and the matrix:
\begin{align*}
    Z = \frac{1}{\tau} \begin{bmatrix*}
        Y & 0 \\
        0 & y
    \end{bmatrix*},
\end{align*}
leading to the following re-write of the problem:
\begin{align*}
\min_{Z} \; & \innp{\hat C}{Z} & \\
\mathrm{s.t.} \; & \innp{\hat{A}_i}{Z} = b_i \;\;\forall i \in [m] \;\; & (x_i) \nonumber \\
& \innp{E_{j}}{Z} = 0 \;\; \forall j \in [n] & (\lambda_j) \\
& \mathrm{Tr}(Z) = 1 \;\; & (\alpha) \\
& Z \in \SymMat_+^{n+1},&
\end{align*}
where $E_j$ is a matrix with ones only at entries $(n+1,j)$ and $(j,n+1)$ and zero everywhere else,
so that $\innp{E_{j}}{Z}$ selects the $j$-th entry of the last column and of the last row,
$\hat{C}$ and $\hat{A}_i$ correspond to matrices $\tau C$, $\tau A_i$ respectively, augmented with a last row and a last column of zeros.
This derivation is quite standard and highlights that one can assume without loss of generality that the optimization problem is over the spectraplex.
The dual variables we associate with each constraint are indicated in parentheses.
The dual problem is:
\begin{align*}
\max_{x, \lambda, \alpha}\; & \sum_i b_i x_i + \alpha \\
\mathrm{s.t.} \; & C - \sum_{i\in[m]} A_i x_i - \sum_{j\in[n]} E_j \lambda_j \succeq \alpha I,
\end{align*}
which can be re-written as:
\begin{align*}
\max_{x, \lambda}\; & \sum_i b_i x_i + \lambda_{\min}\left(C - \sum_{i\in[m]} A_i x_i - \sum_{j\in[n]} E_j \lambda_j\right).
\end{align*}
By the strict feasibility assumption, strong duality holds, both the primal and dual problems attain the optimal value and yield a primal-dual pair.
The dual problem can be cast to the generic template of \eqref{eq:EOptGeneral} with the appropriate affine map $M(\cdot)$.
\end{proof}

\cref{lemma:gradient} expresses the gradient of the smoothed objective.
\begin{proof}[Proof of \cref{lemma:gradient}]
Let $f_{\mu}(X) = -\mu \log\left(\sum_{i=1}^n e^{-\lambda_i(X) / \mu} \right) + \mu \log n$.
The rank-one eigenvector outer product is a subgradient of the corresponding eigenvector w.r.t.~the matrix, i.e., $\partial \lambda_i(X) / \partial X = \vu_i \vu_i^\intercal$.
Through the chain rule and differentiability of spectral functions (see, e.g., \citet{lewis1996derivatives}),
we derive
\begin{align*}
\nabla f_{\mu}(X) = \sum_{i=1}^n \frac{e^{ -\lambda_i / \mu }}{\sum_{j=1}^n e^{ -\lambda_j / \mu }} \vu_i \vu_i^\intercal.
\end{align*}
Furthermore, $\partial M / \partial x_k = \vm_k \vm_k^\intercal$,
by using the chain rule on the composition $g_\mu(\vx) = f_{\mu}(M(\vx))$,
we obtain
\begin{align*}
\left[\nabla g_{\mu}(\vx)\right]_k =& \sum_{i=1}^n \frac{e^{ -\lambda_i / \mu }}{\sum_{j=1}^n e^{ -\lambda_j / \mu }} \innp{\vm_k \vm_k^\intercal}{\vu_i \vu_i^\intercal} \\
 =& \sum_{i=1}^n \frac{e^{ -\lambda_i / \mu }}{\sum_{j=1}^n e^{ -\lambda_j / \mu }} \innp{\vu_i}{\vm_k}^2.
\end{align*}
\end{proof}

\begin{lemma}[\citet{nesterov2007smoothing}; Section 4]\label{lemma:nesterovsmoothness}
    The function $f_{\mu}$ is $L$-smooth with Lipschitz constant $L=1/\mu$.
\end{lemma}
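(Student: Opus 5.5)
The plan is to write $f_\mu$ as a Nesterov-type smoothing, that is, a Fenchel-type minimum over the spectraplex with a strongly convex regularizer, and then apply the standard smoothness theorem of \citet{nesterov2005smooth}. Throughout I take $\SymMat_n$ with the spectral norm $\norm{\cdot}_2$ on matrices and its dual, the trace norm $\norm{\cdot}_*$, on gradients. This is the pairing for which the constant $1/\mu$ is dimension-free, and it is also the pairing that \cref{th:FrankWolfeConvergence} uses to turn $1/\mu$ into $\norm{M}_\mathrm{op}^2/\mu$ through the composition with $M$. Also, because $f_\mu$ is concave, ``$L$-smooth'' here means that $\nabla f_\mu$ is $L$-Lipschitz, equivalently that $-f_\mu$ is convex and $L$-smooth.

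\emph{Step 1 (variational form).} Let $\Delta_n=\{P\in\SymMat_n^+ : \Tr(P)=1\}$ be the spectraplex and $h(P)=\Tr(P\log P)+\log n$ the shifted von Neumann entropy, so that $h\ge 0$ on $\Delta_n$. I will show
\begin{align*}
f_\mu(X)=\min_{P\in\Delta_n}\ \innp{P}{X}+\mu\, h(P).
\end{align*}
To do this I diagonalize $X$ and note that, for fixed eigenvalues of $P$, the term $\innp{P}{X}$ is minimized when $P$ is aligned with the eigenbasis of $X$, in the opposite order (von Neumann trace inequality). This reduces the problem to the vector problem $\min_{p\in\text{simplex}} \sum_i p_i\lambda_i+\mu\sum_i p_i\log p_i$. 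Its minimizer is the Gibbs vector $p_i\propto e^{-\lambda_i/\mu}$, and its value is $-\mu\log\sum_i e^{-\lambda_i/\mu}$. Adding $\mu\log n$ recovers \eqref{eq:SmoothingMinEigMax}. The unique minimizer $P^*(X)$ is exactly the matrix in the gradient formula of \cref{lemma:gradient}, so Danskin's theorem gives $\nabla f_\mu(X)=P^*(X)$.

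\emph{Step 2 (strong convexity).} The function $h$ is $1$-strongly convex on $\Delta_n$ with respect to $\norm{\cdot}_*$. This is the matrix (quantum) Pinsker inequality $\Tr(P\log P-P\log Q)\ge\frac12\norm{P-Q}_*^2$. I would cite it, or alternatively derive it from the classical Pinsker inequality by restricting to the eigenbasis of a commuting pair and using joint convexity and monotonicity of the relative entropy. I expect this step to be the main obstacle, since it is the only place where noncommutativity genuinely matters.

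\emph{Step 3 (conclusion).} Take $X,Y$ and write $P=P^*(X)$, $Q=P^*(Y)$. First-order optimality of $P$ and of $Q$, tested against each other, gives two variational inequalities. Adding them and using the strong monotonicity of $\nabla h$ from Step 2 yields
\begin{align*}
\mu\norm{P-Q}_*^2\le\innp{Y-X}{P-Q}\le\norm{X-Y}_2\,\norm{P-Q}_*.
\end{align*}
Hence $\norm{\nabla f_\mu(X)-\nabla f_\mu(Y)}_*\le\frac1\mu\norm{X-Y}_2$, which gives $L=1/\mu$. Step 3 requires $P$ and $Q$ to lie in the interior of $\Delta_n$, and this holds because the Gibbs weights $e^{-\lambda_i/\mu}$ are strictly positive, so there is no boundary issue in the optimality conditions.
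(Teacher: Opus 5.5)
Your proof is correct, but the paper gives no argument of its own here. It imports the lemma from \citet{nesterov2007smoothing}, where, as far as I recall, the bound is obtained on the primal side by a direct second-order computation. Roughly, one writes $f_\mu(X)=-\mu E(-X/\mu)+\mu\log n$ with $E(Y)=\log\Tr e^{Y}$. The power series of the exponential then gives $0\le\nabla^2 E(Y)[H,H]\le \Tr(e^{Y}H^2)/\Tr(e^{Y})\le\norm{H}_2^2$, and the rescaling by $1/\mu$ yields $L=1/\mu$.

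You instead work on the dual side. The Gibbs variational formula over the spectraplex plus Danskin identifies $\nabla f_\mu$ with the density matrix. The constant then comes from strong monotonicity of the matrix logarithm in the trace norm. Step 3 in fact only uses the symmetrized quantum Pinsker inequality $\Tr((\log P-\log Q)(P-Q))=S(P\|Q)+S(Q\|P)\ge\norm{P-Q}_*^2$ for positive definite $P,Q$, where $S(P\|Q)=\Tr(P\log P-P\log Q)$. This is the matrix instance of the smoothing theorem of \citet{nesterov2005smooth}.

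Each route has its advantages:
\begin{itemize}
\item The primal route needs only matrix calculus. It extends directly to spectral functions $\Tr\phi(X)$ whose power series has nonnegative coefficients.
\item Your route relies on a genuinely noncommutative inequality. In exchange, it makes the spectral/trace norm pairing explicit, which is exactly what the composition constant $\norm{M}_\mathrm{op}^2/\mu$ in \cref{th:FrankWolfeConvergence} requires. It also produces the gradient formula of \cref{lemma:gradient} and the matrix $P_\mu$ of the truncation section for free.
\end{itemize}

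One small correction if you derive Pinsker yourself. Pinch (measure) both states in the eigenbasis of $P-Q$. This leaves $\norm{P-Q}_*$ unchanged, cannot increase the relative entropy by monotonicity, and turns the pinched states into commuting ones, so classical Pinsker applies. Your phrase ``the eigenbasis of a commuting pair'' is misleading, since $P$ and $Q$ themselves need not commute. Joint convexity is also not needed beyond its role in proving monotonicity.
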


\cref{th:FrankWolfeConvergence} transfers the Frank-Wolfe rate for the smoothed objective to a convergence guarantee for the original minimum-eigenvalue objective.
\begin{proof}[Proof of \cref{th:FrankWolfeConvergence}]
By standard Frank-Wolfe \citep{braun2022conditional}, we have
\begin{align*}
    g_\mu(\vx_t) - g_\mu(\vx_\mu^*) \leq \frac{2D^2 \frac{\norm{M}_\mathrm{op}^2}{\mu}}{t+3},
\end{align*}
where $D$ is the diameter of $\Xset$.
For the E-optimal experiment design, we can specify the value of the diameter $D=\sqrt{2N}$ for the hypersimplex $\Xset$ achieved by two extreme points that do not share any support index.
We can also explicit the dependence on the problem data $\norm{M}_\mathrm{op} = \norm{A}_2^2$.
As for the primal gap with respect to the original objective,
\begin{align*}
    g(\vx^*) -  g(\vx_t)  &\leq g_\mu(\vx^*) - g(\vx_t)  \\
    &\leq g_\mu(\vx_\mu^*) - g(\vx_t  \\
    \intertext{where $\vx_\mu^*$ is an optimal solution of the smoothed problem. Continuing}
    g(\vx^*) - g(\vx_t)  &\leq g_\mu(\vx_\mu^*) - g_\mu(\vx_t) + g_\mu(\vx_t) - g(\vx_t) \\
    &\leq \mu \log n + g_\mu(\vx_\mu^*) - g_\mu(\vx_t)  \\
    &\leq \mu \log n + \frac{2D^2\norm{M}_\mathrm{op}^4 }{\mu(t+3)} . \\
    &\leq \frac{\epsilon}{2} + \frac{2D^2 \norm{M}_\mathrm{op}^4 }{\mu(t+3)} . \\
    \intertext{We may increase the right-hand fraction by dropping the 3. Ultimately, we want that $g(\vx) - g^* \leq \epsilon$, so the right hand fraction has to be less than $\epsilon/2$. Thus, we get}
    g(x^*) - g(x_t)  &\leq \norm{M}_\mathrm{op} D \sqrt{\frac{8 \log n}{t}} .
\end{align*}
This concludes the proof.
\end{proof}

The rank rule prunes a node whenever the fixed and remaining rank contributions cannot produce a full-rank matrix.
\begin{proof}[Proof of \cref{prop:rankbasedpruning}]
By subadditivity of the rank and the fact that each selected entry contributes a rank-one matrix, and at most $\rank(M^l)$ if all of them were selected,
we have
\begin{align*}
\rank \left(C_l+\sum_{i \in \mathcal{S}_l^f} x_i \vm_i \vm_i^\intercal \right) \leq \rank(C_l) + \min\{N_v, \rank(M^l)\} < n,
\end{align*}
making the resulting matrix singular for any completion, the node can be pruned.
\end{proof}

\cref{prop:interlacing_bound} allows pruning of a node by bounding the best achievable minimum eigenvalue for the node problem with an eigenvalue of the partial matrix at the node.
\begin{proof}[Proof of \cref{prop:interlacing_bound}]
Fix any $\vx \in \Xset_l$ and write
$\Delta_{\vx} := \sum_{i \in \mathcal{S}_l^f} x_i \vm_i \vm_i^\intercal$,
so that the matrix of interest is $C_l + \Delta_{\vx}$.
Since $\vx \in \{0,1\}^{|\mathcal{S}_l^f|}$ with
$\sum_{i \in \mathcal{S}_l^f} x_i = N_l$, the matrix $\Delta_{\vx}$ is a sum of
$N_l$ rank-one positive semi-definite matrices.
Let $W := \mathrm{ker}(\Delta_{\vx})$, so that $\mathrm{dim}(W) \geq n - N_l$.
Let $E$ be the subspace of $\mathbb{R}^n$ spanned by the eigenvectors of $C_l$
associated with its $N_l+1$ smallest eigenvalues
$\lambda_1(C_l), \ldots, \lambda_{N_l+1}(C_l)$ (counted with multiplicity),
so that $\mathrm{dim}(E) = N_l + 1$. Since
\[
    \mathrm{dim}(W) + \mathrm{dim}(E) \;\geq\; (n - N_l) + (N_l + 1) \;=\; n+1 \;>\; n,
\]
we have $\mathrm{dim}(W \cap E) \geq 1$.
Pick any unit vector $\vw \in W \cap E$.
By the variational characterisation of the minimum eigenvalue:
\[
    \lambda_{\min}(C_l + \Delta_{\vx})
    \;\leq\; \vw^\intercal (C_l + \Delta_{\vx})\, \vw
    \;=\; \vw^\intercal C_l \vw + \vw^\intercal \Delta_{\vx} \vw.
\]
Since $\vw \in W = \mathrm{ker}(\Delta_{\vx})$, the second term vanishes.

Let $q_1, \ldots, q_{N_l+1}$ be the orthonormal eigenvectors of $C_l$ spanning $E$,
with $C_l q_i = \lambda_i(C_l) q_i$.
Since $\vw \in E$ is a unit vector, it expands as $\vw = \sum_{i=1}^{N_l+1} \alpha_i q_i$
with $\sum_{i=1}^{N_l+1} \alpha_i^2 = 1$. Then by orthonormality:
\begin{align*}
\lambda_{\min}(C_l + \Delta_{\vx}) \leq \vw^\intercal C_l \vw = \sum_{i=1}^{N_l+1} \alpha_i^2 \lambda_i(C_l) \leq \lambda_{N_l+1}(C_l) \sum_{i=1}^{N_l+1} \alpha_i^2 = \lambda_{N_l+1}(C_l),
\end{align*}
\end{proof}

\begin{lemma}\label{lemma:lowerboundtrace}
We consider the subproblem solved at a node $l$ after fixing a subset of variables
with the notations defined in \cref{prop:rankbasedpruning}.
Let $\vm_{(j)}$ denote the $j$-th vector of $\mathcal{S}_l^f$,
sorted by non-increasing Euclidean norm, i.e., such that
\begin{align*}
\norm{\vm_{(j)}}_2^2 \geq \norm{\vm_{(j+1)}}_2^2 \;\; \forall j \in [N_v-1]
\end{align*}
Define the following quantities:
\begin{align*}
T_l^0 :=& \Tr(C_l) \\
T_l   :=& T_l^0 + \sum_{j=1}^{N_v} \norm{\vm_{(j)}}_2^2.
\end{align*}
For any $\vx \in \Xset_l$ such that $M_l(\vx) \succ 0$,
we have
\begin{align}
& \lambda_{\max}(M_l(\vx)) \leq T_l \label{eq:maxeigenboundtrace} \\
& \lambda_{\min}(M_l(\vx)) \geq \frac{1}{T_l^{n-1}}.\label{eq:mineigenboundtrace}
\end{align}
\end{lemma}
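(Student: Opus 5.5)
Both bounds follow from positive semidefiniteness of $M_l(\vx)$ together with integrality of its characteristic polynomial. The natural route is to bound $\lambda_{\max}$ by the trace, and then bound $\lambda_{\min}$ via the determinant.

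\emph{Bound \eqref{eq:maxeigenboundtrace}.} Since $C\succeq 0$ and every $\vm_i\vm_i^\intercal\succeq 0$, the matrix $M_l(\vx)=C_l+\sum_{i\in\mathcal{S}_l^f}x_i\vm_i\vm_i^\intercal$ is positive semidefinite, so all its eigenvalues are nonnegative. Hence
\[
\lambda_{\max}(M_l(\vx))\le \Tr(M_l(\vx))=\Tr(C_l)+\sum_{i\in\mathcal{S}_l^f}x_i\norm{\vm_i}_2^2 .
\]
The binary vector $\vx\in\Xset_l$ selects exactly $N_l$ (written $N_v$ in the lemma) of the free vectors. The sum of their squared norms is therefore at most the sum of the $N_l$ largest squared norms, which gives $\Tr(M_l(\vx))\le T_l$.

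\emph{Bound \eqref{eq:mineigenboundtrace}.} We work under the standing assumption of this section that $M_l(\vx)$ is an integer matrix for binary $\vx$. Then $\det M_l(\vx)$ is an integer, and it is positive because $M_l(\vx)\succ 0$; hence $\det M_l(\vx)\ge 1$. Writing $\det M_l(\vx)=\lambda_{\min}\prod_{i\ge 2}\lambda_i\le \lambda_{\min}\,\lambda_{\max}^{n-1}$ and applying the first bound yields
\[
\lambda_{\min}(M_l(\vx))\ \ge\ \frac{\det M_l(\vx)}{T_l^{\,n-1}}\ \ge\ \frac{1}{T_l^{\,n-1}} .
\]
This step uses $T_l>0$, which holds since $M_l(\vx)\succ0$ forces a positive trace.

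\emph{Main obstacle.} The only delicate point is justifying integrality of $\det M_l(\vx)$. As stated, the lemma does not say that $M_l(\vx)$ is integral, so I would invoke the integrality hypothesis of \cref{sec:analysis_integral_instances}, under which this lemma is used in \cref{thm:safesmoothing}. Without that hypothesis the lower bound \eqref{eq:mineigenboundtrace} fails, for instance after scaling all data by a small factor.
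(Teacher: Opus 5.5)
Your proposal is correct and matches the paper's proof: bound $\lambda_{\max}(M_l(\vx))$ by the trace, which the $N_l$ largest squared norms control, then use integrality and positive definiteness to get $\det M_l(\vx)\ge 1 \le$-chain $\lambda_{\min}\,T_l^{\,n-1}$, i.e.\ $1\le\det M_l(\vx)\le\lambda_{\min}\,T_l^{\,n-1}$. You are also right that the integrality hypothesis of \cref{sec:analysis_integral_instances} is needed here (the paper's proof invokes it implicitly), and that $N_v$ should read $N_l$.
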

\begin{proof}
The quantity $T_l$ is a bound on $\Tr(M_l(\vx))$ for any feasible $\vx$, i.e., binary and respecting the budget $N_l$.
Let $\lambda_i$ be the $i$-th eigenvalue of $M_l(\vx)$ ordered in non-decreasing order, so that $\lambda_{\min} = \lambda_1$.
By $M_l(\vx)$ being positive definite, the trace bounds the maximum eigenvalue,
\begin{align*}
\lambda_{\max}(M_l(\vx)) \leq \Tr(M_l(\vx)) \leq T_l.
\end{align*}
Since $M_l(\vx)$ is an integer positive definite matrix, its determinant is greater or equal to one,
which yields
\begin{align*}
& 1 \leq \det(M_l(\vx)) = \lambda_{\min} \prod_{i=2}^{n} \lambda_i \leq \lambda_{\min} T_l^{n-1}.
\end{align*}
Rearranging gives the desired inequality \eqref{eq:mineigenboundtrace}.
\end{proof}

\cref{prop:simplicityeigengap} gives an eigengap condition under which the minimum eigenvalue remains simple throughout the node.
\begin{proof}[Proof of \cref{prop:simplicityeigengap}]
Fix any $\vx \in \Xset_l$ and write $X = M_l(\vx) + C$. Since $M_l(\vx) \succeq 0$,
Weyl's inequality gives:
\begin{align}
    \lambda_{2}(X) & \geq \lambda_{2}(C) + \lambda_{\min}(M_l(\vx)) \geq \lambda_{2}(C), \label{eq:weyl_upper} \\
    \lambda_{\min}(X) &\leq \lambda_1(C) + \lambda_{\max}(M_l(\vx)) \leq \lambda_{\min}(C) + T_l, \label{eq:weyl_lower}
\end{align}
where \eqref{eq:weyl_lower} uses $\lambda_{\max}(M_l(\vx)) \leq \mathrm{Tr}(M_l(\vx)) \leq T_l$ from \cref{lemma:lowerboundtrace}.
Subtracting \eqref{eq:weyl_lower} from \eqref{eq:weyl_upper}:
\begin{align*}
    \lambda_{2}(X) - \lambda_{\min}(X) \geq \gamma_C - T_l > 0,
\end{align*}
where the final strict inequality is exactly \eqref{eq:simplicity_condition}.
Hence, $\lambda_{\min}(X)$ is strictly separated from $\lambda_{2}(X)$ and is a simple eigenvalue.
\end{proof}

\begin{lemma}[Spectral tail weight and trace norm]\label{lemma:spectraltailweight}
For any $\vx \in \mathcal{X}$ and $r<n$, we have
\begin{align*}
\|P_\mu(\vx) - \widetilde{P}_\mu^{(r)}(\vx)\|_* = 2\tau_r(\vx).
\end{align*}
Moreover, $\tau_r(\vx) \leq (n-r)\, e^{-\Delta_r(\vx)/\mu}$.
\end{lemma}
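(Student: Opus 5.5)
Both $P_\mu(\vx)$ and $\widetilde{P}_\mu^{(r)}(\vx)$ are diagonal in the same orthonormal eigenbasis $\{\vu_i\}_{i\in[n]}$ of $M(\vx)$. The plan is therefore to write their difference explicitly in that basis and read off its eigenvalues. The trace norm of a symmetric matrix is then the sum of the absolute values of these eigenvalues.

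\textbf{Setting up the weights.} We set $w_i := e^{-\lambda_i(M(\vx))/\mu}$ and $S_r := \sum_{i=1}^r w_i$, so that $\rho_\mu(\vx) = \sum_{i=1}^n w_i$ and $S_r = (1-\tau_r(\vx))\rho_\mu(\vx)$. For $i \le r$, the coefficient of $\vu_i\vu_i^\intercal$ in $P_\mu - \widetilde{P}_\mu^{(r)}$ is
\[
w_i\Big(\tfrac{1}{\rho_\mu} - \tfrac{1}{S_r}\Big) = -\tfrac{w_i}{S_r}\,\tau_r \le 0 .
\]
For $i > r$, the coefficient is $w_i/\rho_\mu \ge 0$. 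Summing absolute values gives
\[
\tau_r \sum_{i\le r} \tfrac{w_i}{S_r} + \sum_{i>r} \tfrac{w_i}{\rho_\mu} = \tau_r + \tau_r = 2\tau_r .
\]
Equivalently, both matrices have unit trace, so the negative and positive parts of the difference carry equal mass, and the positive part has mass exactly $\tau_r$. One subtlety should be mentioned: if eigenvalues are repeated across the truncation boundary, the eigenbasis is not unique. We fix the basis used to define $\widetilde{P}_\mu^{(r)}$, and the computation is carried out in that basis, so it remains valid.

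\textbf{Tail bound.} We have
\[
\tau_r = \frac{\sum_{i>r} w_i}{\rho_\mu} \le \frac{\sum_{i>r} w_i}{w_1},
\]
since $\rho_\mu \ge w_1$. For $i \ge r+1$, the eigenvalues are ordered non-decreasingly, so $\lambda_i \ge \lambda_{r+1}$. Hence
\[
\frac{w_i}{w_1} = e^{-(\lambda_i-\lambda_1)/\mu} \le e^{-\Delta_r(\vx)/\mu}.
\]
There are $n-r$ such terms, which gives the claim.

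There is no real obstacle in this argument. The only care needed is the sign bookkeeping in the first part and the remark about the choice of basis under multiplicities.
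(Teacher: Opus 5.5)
Your proof is correct and is essentially the paper's argument: the paper packages your coefficient computation as the convex decomposition $P_\mu(\vx) = (1-\tau_r(\vx))\,\widetilde{P}_\mu^{(r)}(\vx) + \tau_r(\vx)\, R_\mu^{(r)}(\vx)$, with $R_\mu^{(r)}$ the normalized tail density matrix, so the difference is $\tau_r$ times a difference of two orthogonally supported unit-trace PSD matrices, whose trace norm is $2$. Your tail bound, using $\rho_\mu(\vx)\geq e^{-\lambda_1/\mu}$ and $\lambda_i\geq\lambda_{r+1}$ for $i>r$, is the same as the paper's.
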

\begin{proof}
Define the tail density matrix supported on the discarded eigenspace:
\begin{align*}
R_\mu^{(r)}(\vx) := \frac{1}{\tau_r(\vx)\,\rho_\mu(\vx)}
\sum_{i=r+1}^{n} e^{-\lambda_i(M(\vx))/\mu}\,
\vu_i(M(\vx))\vu_i(M(\vx))^\intercal.
\end{align*}
A direct check of coefficients on each rank-one term $\vu_i\vu_i^\intercal$ yields the convex decomposition
\begin{align*}
P_\mu(\vx) = (1-\tau_r(\vx))\, \widetilde{P}_\mu^{(r)}(\vx) + \tau_r(\vx)\, R_\mu^{(r)}(\vx),
\end{align*}
so that $P_\mu(\vx) - \widetilde{P}_\mu^{(r)}(\vx) = \tau_r(\vx)\left(R_\mu^{(r)}(\vx) - \widetilde{P}_\mu^{(r)}(\vx)\right)$.
Since $\widetilde{P}_\mu^{(r)}(\vx)$ and $R_\mu^{(r)}(\vx)$ are density matrices supported on the orthogonal subspaces $\mathrm{span}\{\vu_i\}_{i\leq r}$ and $\mathrm{span}\{\vu_i\}_{i>r}$, respectively, the eigenvalues of their difference are exactly the (nonnegative) eigenvalues of $R_\mu^{(r)}(\vx)$ together with the negatives of those of $\widetilde{P}_\mu^{(r)}(\vx)$. Hence
\begin{align*}
\left\|R_\mu^{(r)}(\vx) - \widetilde{P}_\mu^{(r)}(\vx)\right\|_* = \mathrm{Tr}(R_\mu^{(r)}(\vx)) + \mathrm{Tr}(\widetilde{P}_\mu^{(r)}(\vx)) = 2,
\end{align*}
and therefore $\|P_\mu(\vx) - \widetilde{P}_\mu^{(r)}(\vx)\|_* = 2\tau_r(\vx)$.
For the spectral gap bound, since $\lambda_i \geq \lambda_{r+1}$ for all $i \geq r+1$ and
$\rho_\mu(\vx)\geq e^{-\lambda_1/\mu}$:
\begin{align*}
\tau_r(\vx)
= \frac{\sum_{i=r+1}^{n} e^{-\lambda_i/\mu}}{\rho_\mu(\vx)}
\leq \frac{(n-r)e^{-\lambda_{r+1}/\mu}}{e^{-\lambda_1/\mu}}
= (n-r)e^{-(\lambda_{r+1}-\lambda_1)/\mu}
= (n-r)e^{-\Delta_r(\vx)/\mu}.
\end{align*}
\end{proof}

\begin{lemma}[Instance-specific oracle error]\label{lemma:oracleerror}
Consider a polytope $\mathcal{X} \subseteq [0,1]^m$.
For all $\vx \in \mathcal{X}$ and all $\vy, \vz \in \mathcal{X}$, we have
\begin{align*}
\left|\left\langle \nabla g_\mu(\vx) - \widetilde{\nabla} g_\mu^{(r)}(\vx),\; \vy - \vz \right\rangle\right| \leq \delta_r(\vx) := 2\|M\|_{\mathrm{op}}^2\, \tau_r(\vx).
\end{align*}
\end{lemma}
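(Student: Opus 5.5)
The plan is to reduce the inner product to a trace pairing between the density-matrix difference and a single matrix built from $\vy-\vz$, and then use trace-norm/operator-norm duality. By \cref{lemma:gradient} and the definition of the truncated gradient, for every $k\in[m]$,
\[
[\nabla g_\mu(\vx) - \widetilde{\nabla} g_\mu^{(r)}(\vx)]_k = \innp{P_\mu(\vx) - \widetilde{P}_\mu^{(r)}(\vx)}{\vm_k\vm_k^\intercal}.
\]
By linearity in $k$, this gives
\[
\innp{\nabla g_\mu(\vx) - \widetilde{\nabla} g_\mu^{(r)}(\vx)}{\vy-\vz} = \innp{P_\mu(\vx) - \widetilde{P}_\mu^{(r)}(\vx)}{\textstyle\sum_k (y_k - z_k)\vm_k\vm_k^\intercal}.
\]
The matrix on the right is $M(\vy)-M(\vz)$, since $C$ cancels. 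Equivalently, it is the linear part $\mathcal{M}(\vy-\vz)$ of the affine map $M$.

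Next apply Hölder's inequality for Schatten norms, $|\innp{A}{B}|\leq \norm{A}_*\norm{B}_2$. Combined with \cref{lemma:spectraltailweight}, this yields
\[
|\cdot| \leq 2\tau_r(\vx)\,\norm{\mathcal{M}(\vy-\vz)}_2 .
\]
The case $r=n$ is trivial because then both sides are zero.

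It remains to show $\norm{\mathcal{M}(\vy-\vz)}_2\leq \norm{M}_{\mathrm{op}}^2$. Using the definition of $\norm{M}_{\mathrm{op}}$ from \cref{th:FrankWolfeConvergence} as an operator norm of the linear part, we get $\norm{\mathcal{M}(\vy-\vz)}_2\leq \norm{M}_{\mathrm{op}}\norm{\vy-\vz}$. This step is the main obstacle, because the stated constant is $\norm{M}_{\mathrm{op}}^2$ rather than $\norm{M}_{\mathrm{op}}\,D$. I expect that the intended reading follows the convention in the proof of \cref{th:FrankWolfeConvergence}, where $\norm{M}_{\mathrm{op}}=\norm{A}_2^2$. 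I would aim to bound the matrix directly instead of passing through $\norm{\vy-\vz}$.

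For that direct bound, write $\mathcal{M}(\vy-\vz) = \mathcal{M}(\vy)-\mathcal{M}(\vz)$. Both terms are PSD because $\vy,\vz\in[0,1]^m$ are nonnegative. For PSD matrices $A,B$, $\norm{A-B}_2\leq\max(\norm{A}_2,\norm{B}_2)$. Moreover, for $\vy\in[0,1]^m$,
\[
\norm{\mathcal{M}(\vy)}_2 = \norm{A^\intercal\Diag(\vy)A}_2 \leq \norm{A^\intercal A}_2 = \norm{A}_2^2,
\]
where $A$ has rows $\vm_k^\intercal$. This equals $\norm{M}_{\mathrm{op}}$ under the paper's convention, and it is $\leq\norm{M}_{\mathrm{op}}^2$ whenever $\norm{M}_{\mathrm{op}}\geq 1$.

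If neither normalization gives the clean constant, the fallback is to state the bound as $2\tau_r(\vx)\norm{A}_2^2$. This quantity is still dominated by $\delta_r(\vx)$ under the identification $\norm{M}_{\mathrm{op}}=\norm{A}_2^2$ used earlier in the paper, possibly after absorbing a factor of $\norm{M}_{\mathrm{op}}$. Combining the three steps then gives
\[
|\langle\cdot\rangle|\leq 2\norm{M}_{\mathrm{op}}^2\tau_r(\vx)=\delta_r(\vx).
\]
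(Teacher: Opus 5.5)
Your argument is the paper's argument. You reduce the inner product to $\mathrm{Tr}[(P_\mu(\vx)-\widetilde{P}_\mu^{(r)}(\vx))(M(\vy)-M(\vz))]$, apply the trace-norm/operator-norm H\"older step together with \cref{lemma:spectraltailweight}, and arrive at the same final quantity $\|M(\vy)-M(\vz)\|_2\le\|A\|_2^2$, where $A$ has rows $\vm_k^\intercal$. The paper obtains that bound from $|\vvv^\intercal(M(\vy)-M(\vz))\vvv|\le\|\vy-\vz\|_\infty\|A\vvv\|^2$. Your PSD sandwich $-\mathcal{M}(\vz)\preceq\mathcal{M}(\vy)-\mathcal{M}(\vz)\preceq\mathcal{M}(\vy)$ is an equally valid variant. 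Treating $r=n$ separately is a small improvement, since the tail-weight lemma is only stated for $r<n$.

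The step to fix is the last one. The paper's proof writes $\|M\vvv\|^2\le\|M\|_{\mathrm{op}}^2$, so it reads $\|M\|_{\mathrm{op}}$ as $\|A\|_2$, the spectral norm of $\vvv\mapsto(\vm_k^\intercal\vvv)_k$. Under that reading your bound $2\tau_r(\vx)\|A\|_2^2$ is exactly $\delta_r(\vx)$, with no further assumption.

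Your patches for the other convention do not work. That convention is $\|M\|_{\mathrm{op}}=\|A\|_2^2$, or equivalently the definition in \cref{th:FrankWolfeConvergence}. Assuming $\|M\|_{\mathrm{op}}\ge 1$ adds a hypothesis, and no factor can be absorbed, because the stated inequality is false under that convention. Take $n=m=2$, $C=0$, $\mathcal{X}=[0,1]^2$, $\vm_k=\epsilon\,\mathbf{e}_k$ with $\epsilon<1$, $r=1$, any $\vx$ with $x_1<x_2$, and $\vy=\mathbf{e}_2$, $\vz=\mathbf{e}_1$. Then:
\begin{itemize}
\item $P_\mu(\vx)-\widetilde{P}^{(1)}_\mu(\vx)=\tau_1(\vx)\Diag(-1,1)$;
\item the left-hand side equals $2\epsilon^2\tau_1(\vx)$;
\item both alternative readings give $\|M\|_{\mathrm{op}}=\epsilon^2$, so the right-hand side is $2\epsilon^4\tau_1(\vx)$, which is smaller.
\end{itemize}
So commit to the $\|A\|_2$ reading, which is the one the paper's proof actually uses, and do not hedge with a conditional or a fallback.
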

\begin{proof}
Using the gradient expression from \cref{lemma:spectraltailweight} and linearity of $M$:
\begin{align*}
\left\langle \nabla g_\mu(\vx) - \widetilde{\nabla} g_\mu^{(r)}(\vx),\, \vy - \vz\right\rangle &= \sum_{k=1}^m (y_k - z_k)\left\langle P_\mu(\vx) - \widetilde{P}_\mu^{(r)}(\vx),\, \vm_k \vm_k^\intercal\right\rangle \\
&=\mathrm{Tr}\!\left[\left(P_\mu(\vx) - \widetilde{P}_\mu^{(r)}(\vx)\right) \underbrace{\sum_{k=1}^m (\vy_k-\vz_k) \vm_k \vm_k^\intercal}_{= M(\vy) - M(\vz)}\right].
\end{align*}
Applying the Hölder inequality for the trace and operator norms yields
\begin{align*}
\left|\mathrm{Tr}\!\left[\left(P_\mu(\vx) - \widetilde{P}_\mu^{(r)}(\vx)\right)\left(M(\vy) - M(\vz)\right)\right]\right| \leq \|P_\mu(\vx) - \widetilde{P}_\mu^{(r)}(\vx)\|_*\cdot\|M(\vy) - M(\vz)\|_{\mathrm{op}}.
\end{align*}
For any unit vector $\vvv \in \R^n$:
\begin{align*}
\vvv^\intercal(M(\vy) - M(\vz))\vvv = \sum_{k=1}^m (y_k - z_k)(\vm_k^\intercal \vvv)^2 \leq \|\vy - \vz\|_\infty \cdot \|M\vvv\|^2 \leq \|M\|_{\mathrm{op}}^2,
\end{align*}
where the last inequality uses $\|\vy - \vz\|_\infty \leq 1$ for $\vy, \vz \in \mathcal{X}$,
resulting in $\|M(\vy) - M(\vz)\|_{\mathrm{op}} \leq \|M\|_{\mathrm{op}}^2$.

We can now combine this inequality with \cref{lemma:spectraltailweight}
\begin{align*}
\left|\left\langle \nabla g_\mu(\vx) - \widetilde{\nabla} g_\mu^{(r)}(\vx),\, \vy - \vz\right\rangle\right| \leq 2\tau_r(\vx)\cdot\|M\|_{\mathrm{op}}^2 = \delta_r(\vx),
\end{align*}
giving the desired inequality.
\end{proof}

\cref{thm:fwtruncated} controls the Frank-Wolfe convergence loss caused by replacing the full gradient with a uniformly accurate truncated gradient.
\begin{proof}[Proof of \cref{thm:fwtruncated}]
Let $g_\mu^* = \max_{\vx \in \mathcal{X}} g_\mu(\vx)$ and $\vx^*_\mu$ be its maximizer.
Define $h_t = g_\mu^* - g_\mu(\vx_t)$, the primal gap at iteration $t$.

By $L$-smoothness of $g_\mu$ and
\begin{align*}
\|\vx_{t+1} - \vx_t\|^2 = \gamma_t^2\|\tilde{\vvv}_t - \vx_t\|^2 \leq \gamma_t^2 D^2,
\end{align*}
we obtain
\begin{align}
h_{t+1} \leq h_t - \gamma_t \left\langle \nabla g_\mu(\vx_t),\, \tilde{\vvv}_t - \vx_t\right\rangle + \frac{\gamma_t^2 L}{2} D^2. \label{eq:termAintruncation}
\end{align}

We decompose using the truncation error:
\begin{align*}
\innp{\nabla g_\mu(\vx_t)}{\tilde{\vvv}_t - \vx_t} = \underbrace{\innp{\widetilde{\nabla} g_\mu^{(r_t)}(\vx_t)}{\tilde{\vvv}_t - \vx_t}  }_{(\mathrm{I})} + \underbrace{\innp{\nabla g_\mu(\vx_t) - \widetilde{\nabla} g_\mu^{(r_t)}(\vx_t)}{\tilde{\vvv}_t - \vx_t}  }_{(\mathrm{II})}.
\end{align*}

For (I): since $\tilde{\vvv}_t$ maximizes the approximate gradient over $\mathcal{X}$:
\begin{align*}
\left\langle \widetilde{\nabla} g_\mu^{(r_t)}(\vx_t),\, \tilde{\vvv}_t - \vx_t\right\rangle \geq \left\langle \widetilde{\nabla} g_\mu^{(r_t)}(\vx_t),\, \vx_\mu^* - \vx_t\right\rangle = \left\langle \nabla g_\mu(\vx_t),\, \vx_\mu^* - \vx_t\right\rangle + \left\langle \widetilde{\nabla} g_\mu^{(r_t)}(\vx_t) - \nabla g_\mu(\vx_t),\, \vx_\mu^* - \vx_t\right\rangle.
\end{align*}
Applying \cref{lemma:oracleerror} to the remaining error term:
\begin{align*}
|\langle \widetilde{\nabla} g_\mu^{(r_t)} - \nabla g_\mu, \vx_\mu^* - \vx_t \rangle| \leq \bar{\delta}.
\end{align*}
The term (II) satisfies $|(\mathrm{II})| \leq \bar{\delta}$ from \cref{lemma:oracleerror} applied directly, and in particular $(\mathrm{II}) \geq -\bar{\delta}$.
Combining both bounds and using concavity of $g_{\mu}$, which gives $\innp{\nabla g_\mu(\vx_t)}{\vx_\mu^* - \vx_t} \geq g_\mu^* - g_\mu(\vx_t) = h_t$, we obtain
\begin{align}
    \innp{\nabla g_\mu(\vx_t)}{\tilde{\vvv}_t - \vx_t} \geq h_t - 2\bar{\delta}. \label{eq:termBintruncation}
\end{align}

Substituting \eqref{eq:termBintruncation} into \eqref{eq:termAintruncation}:
\begin{align}
h_{t+1} \leq (1-\gamma_t)\,h_t + \frac{\gamma_t^2 L D^2}{2} + 2\gamma_t\bar{\delta}.\label{eq:tempeqFWerror}
\end{align}

Define $\Phi_t = \frac{t(t+1)}{2} h_t$.
Multiplying \eqref{eq:tempeqFWerror} by $\frac{(t+1)(t+2)}{2}$ and substituting $\gamma_t = 2/(t+2)$:
\begin{align*}
\Phi_{t+1} \leq \Phi_t + \frac{(t+1)}{t+2}LD^2 + 2(t+1)\bar{\delta} \leq \Phi_t + LD^2 + 2(t+1)\bar{\delta}.
\end{align*}

Summing from $t = 0$ to $T-1$ with $\Phi_0 = 0$:
\begin{align*}
\frac{T(T+1)}{2}\,h_T \leq T\cdot LD^2 + 2\bar{\delta}\cdot\frac{T(T+1)}{2}.
\end{align*}

Dividing by $T(T+1)/2$ gives the smoothed objective bound:
\begin{align*}
g_\mu^* - g_\mu(\vx_T) \leq \frac{2LD^2}{T+1} + 2\bar{\delta}.
\end{align*}
Following the same argument as \cref{th:FrankWolfeConvergence}, the smoothing sandwich
$g\leq g_\mu\leq g+\mu\log n$ implies $g^*\leq g_\mu^*$ and
$0\leq g_\mu(\vx_T)-g(\vx_T)\leq\mu\log n$:
\begin{align*}
g^* - g(\vx_T)
= \underbrace{(g^* - g_\mu^*)}_{\leq 0}
+ \underbrace{(g_\mu^* - g_\mu(\vx_T))}_{= h_T}
+ \underbrace{(g_\mu(\vx_T) - g(\vx_T))}_{\leq \mu\log n}
\leq \mu\log n + \frac{2LD^2}{T+1} + 2\bar{\delta}.
\end{align*}
\end{proof}

\cref{cor:complexitytruncation} specializes the truncated-gradient guarantee to the hypersimplex and converts the spectral-tail estimate into an iteration and eigengap condition.
\begin{proof}[Proof of \cref{cor:complexitytruncation}]
From \cref{thm:fwtruncated}, with $L = \|M\|_{\mathrm{op}}^2/\mu$, $\mu = \varepsilon/(2\log n)$, and $D^2 = 2N$, we have $2LD^2/(T+1) = 8N\|M\|_{\mathrm{op}}^2\log n/(\varepsilon(T+1))$, so
\begin{align*}
    g^* - g(\vx_T) \leq \mu\log n + \frac{8N\|M\|_{\mathrm{op}}^2\log n}{\varepsilon(T+1)} + 2\bar{\delta}.
\end{align*}
Setting $\bar{\delta} = \varepsilon/8$ so that $\mu\log n + 2\bar{\delta} = 3\varepsilon/4$, and requiring $\frac{8N\|M\|_{\mathrm{op}}^2\log n}{\varepsilon(T+1)} \leq \varepsilon/4$ yields the stated bound on $T$.
For the truncation condition, $\delta_{r_t}(\vx_t) = 2\|M\|_{\mathrm{op}}^2 \tau_{r_t}(\vx_t) \leq \varepsilon/8$ is equivalent to $\tau_{r_t}(\vx_t) \leq \varepsilon/(16\|M\|_{\mathrm{op}}^2)$.
Applying the spectral gap bound from \cref{lemma:spectraltailweight}:
\begin{align*}
\tau_{r_t}(\vx_t) \leq (n-r_t)\,e^{-\Delta_{r_t}(\vx_t)/\mu} \leq \frac{\varepsilon}{16\|M\|_{\mathrm{op}}^2}
\end{align*}
holds as soon as $\Delta_{r_t}(\vx_t) \geq \mu \log\!\left(\frac{16(n-r_t)\|M\|_{\mathrm{op}}^2}{\varepsilon}\right)$.
\end{proof}  

\cref{thm:safesmoothing} presents the lower-bound on the separation between optimal and non-optimal integral objective values using their characteristic polynomials.
\begin{proof}[Proof of \cref{thm:safesmoothing}]
Let $m_y \in \Z[\lambda]$ denote the minimal polynomial of $g_y$ over $\mathbb{Q}$;
it is monic and irreducible over $\mathbb{Q}[\lambda]$, with roots $\{g_{\vy} = \beta_1, \dots, \beta_s\}$, with $s=\deg(m_y)$, the algebraic conjugates of $g_y$.
We also note $\{\alpha_i\}_{i\in[n]}$ the roots of $q_{\vx^*}$, i.e., the eigenvalues of $M_l(\vx^*)$, ordered in non-increasing order.
Since $m_{\vy}$ is irreducible over $\mathbb{Q}[\lambda]$ and does not divide $q_x$, $\mathrm{gcd}(m_{y},q_x) = 1$ over $\mathbb{Q}[\lambda]$,
the two polynomials are coprime.

The polynomials $m_{\vy}$ and $q_x$ are monic elements of $\Z[\lambda]$, their resultant is an integer as the determinant of their Sylvester matrix.
Combined with coprimality, the resultant is an integer with absolute value greater or equal to one.
The resultant can be expressed as:
\begin{align}\label{eq:resultant}
|\mathrm{Res}(m_{\vy}, q_{\vx^*})| = \prod_{j=1}^{s} \prod_{i=1}^{n} |\beta_j - \alpha_i| \geq 1.
\end{align}
We can now bound the absolute differences $|\beta_j - \alpha_i|\, \forall j \in [s], i \in [n]$.
By \cref{lemma:lowerboundtrace}, all eigenvalues of $M_l(\vx^*)$ and $M_l(\vy)$ lie in $(0,T_l]$.
The polynomial $m_y$ is the monic polynomial of least degree in $\mathbb{Q}[\lambda]$ vanishing at $g_{\vy}$.
The polynomial $q_{\vy}$ also vanishes at $g_{\vy}$ since this is one of the eigenvalues of $M_l(\vy)$.
By the minimality and irreducibility of $m_y$, we have that $m_y$ divides $q_{\vy}$ in $\mathbb{Q}[\lambda]$,
all the roots $\{\beta_j\}_{j\in[s]}$ are roots of $q_{\vy}$, and by \cref{lemma:lowerboundtrace} thus all lie in $(0,T_l]$.
We can conclude for any pair $(i,j) \in [n] \times [s]$ that $|\beta_j - \alpha_i| \leq \max \{\beta_j,\alpha_i\} \leq T_l$.
Combining with \eqref{eq:resultant}:
\begin{align}
& 1 \leq \prod_{j=1}^{s} \prod_{i=1}^{n} |\beta_j - \alpha_i| \nonumber \\
& 1 \leq (\beta_1 - \alpha_n) \prod_{(i,j) \in [n]\times [s] (n,1)} |\beta_j - \alpha_i|\nonumber \\
& 1 \leq \delta_l T_l^{sn-1}.\label{eq:boundwiths}
\end{align}
Since $m_y$ is the minimal polynomial of $g_{\vy}$, it divides $q_{\vy}$.
The value $g_{\vy}$ is not a root of $\gcd(q_{\vx^*}, q_{\vy})$ by
optimality of $\vx^*$ and suboptimality of $\vy$.
Hence, $m_{\vy}$ divides the cofactor $q_{\vy} / \gcd(q_{\vx},q_{\vy})$ which has degree $n-d$, which is thus an upper bound on $s$.
Replacing $s$ in \eqref{eq:boundwiths} and re-arranging, we obtain the desired bound of the theorem.
\end{proof}

\cref{prop:tightening} uses a dual upper bound and the bound multipliers to certify variable fixings against the incumbent.
\begin{proof}[Proof of \cref{prop:tightening}]
Let $P$ and $D$ denote the current primal-dual problem pair and $\bar{P}_j$ and $\bar{D}_j$ the new primal-dual problem pair. 
We will only present the proof for the first tightening condition, the other follow analogously.
First note that $S_D$ is still valid solution for the new dual problem $\bar{D}_j$ and its
objective value is an upper bound both on the optimal value of $\bar{D}_j$ and on the optimal value of $\bar{P}_j$ by weak duality.
It is easy to see that the objective value of $S_D$ for $\bar{D}_j$ is precisely 
given by $d - \alpha_j$.
In a B\&B tree, we can discard nodes with their optimal solution is smaller than the current incumbent.
Therefore, if $d - \alpha_j \leq \hat{G}$, we may fix $x_j = 0$.
\end{proof}

\cref{lem:dual_solution_from_primal} presents the construction of dual variables from the minimum eigenspace and the ordered experiment scores of a primal point.
\begin{proof}[Proof of \cref{lem:dual_solution_from_primal}]
We will start with the derivation of $Z$.
In the following, we will denote the optimal value of a given primal or dual variable by a superscripted star $*$.
    From the complementary slackness of the primal KKT conditions, we have 
\begin{align*}
    \innp{Z^*}{M(\vx^*) - \lambda^* I} &= 0 \\
    \Rightarrow \quad \innp{Z^*}{M(\vx^*)} &= \lambda^* \Tr(Z^*).
\end{align*}
Since $Z$ cannot be the zero matrix, due to the constraint on its trace, we can conclude that $Z^*$
must be a convex combination of $\{\vw_i^*\vw_i^{*\intercal}\}_{i=1}^p$ where $\{\vw_i^*\}_{i\in [p]}$ are the eigenvectors
associated with the smallest eigenvalue of $M(\vx^*)$ and $p$ is the corresponding multiplicity.
Using this observation, we arrive at the expression in \eqref{eq:Z_from_primal}.
Note that since the eigenvectors of $M(\vx)$ and $M(\vx^*)$ can be arbitrarily different, even if the objective values
are close, $Z$ is potentially a poor approximation of $Z^*$ but it is nevertheless valid.

For the choice of $\zeta$, we first compute the scores $v_i = \innp{Z}{\vm_i\vm_i^\intercal}$ for all $i \in [m]$ and order $\vvv$ in 
descending order. 
Let $\kappa(\zeta) = \# \{v_i > \zeta \mid i \in [m]\}$ be the number of experiments with score greater than $\zeta$.
The constraints $\zeta = v_i + \alpha_i - \beta_i$ force $\alpha_i = \max(0, \zeta - v_i)$ and $\beta_i = \max(0, v_i -\zeta)$.
Therefore, we can rewrite the objective function as
\begin{align}
    N\zeta - \sum_{i=\kappa(\zeta)+1 }^m l_i \max(0, \zeta - v_i) + \sum_{j=1}^{\kappa(\zeta)} u_j \max(0, v_j -\zeta) + \innp{Z}{C}
\end{align}
and our choice of $\zeta$ should minimize the above expression.
In the experiments, we are only considering binary variables, and in this case, we find
$\zeta = \vvv_{(N)}$, the $N$-th largest score.
If some experiments have already been fixed, we only consider the scores of the unfixed experiments
and the budget $N$ is reduced accordingly.
\end{proof}

\begin{lemma}[Dual Price Approximation]
\label{lem:dual_price}
At node $l$, consider the concave maximization node subproblem:
\begin{align}
    \max_{\vx \in \mathrm{conv}(\mathcal{X}_l)} g_{\mu_l}(\vx). \label{eq:CPl}
\end{align}
At iteration $t$, the Frank-Wolfe linear subproblem is:
\begin{align}
    \vvv_t = \argmax_{\vvv \in \mathrm{conv}(\mathcal{X}_l)}
    \nabla g_{\mu_l}(\vx_t)^\intercal \vvv, \label{eq:LPl}
\end{align}
with FW gap $\Gamma_t = \nabla g_{\mu_l}(\vx_t)^\intercal(\vvv_t-\vx_t) \geq 0$;
it is an instance of a continuous knapsack with uniform weights.
The dual of \eqref{eq:LPl} with respect to the budget constraint
$\sum_{i \in S_f^l} z_i = N_l$ has optimal dual variable:
\begin{align}
    \bar{\zeta}_t = [\nabla g_{\mu_l}(\vx_t)]_{(N_l)}, \label{eq:tbar}
\end{align}
the $N_l$-th largest component of $\nabla g_{\mu_l}(\vx_t)$ over $i \in S_f^l$,
computed as a by-product of solving \eqref{eq:LPl}. Let $\zeta^*$ be the optimal
dual variable associated with the budget constraint $\sum_{i \in S_f^l} x_i = N_l$
in the dual problem \eqref{eq:SDPDual}. Then:
\begin{align}
    \bar{\zeta}_t - \Gamma_t \leq \zeta^* \leq \bar{\zeta}_t + \Gamma_t. \label{eq:DP}
\end{align}
\end{lemma}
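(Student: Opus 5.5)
The plan is to recognise \eqref{eq:LPl} as a Lagrangian slice of the node dual \eqref{eq:SDPDual}, obtained by freezing $Z$ at the soft-min density matrix $P_{\mu_l}(\vx_t)$. The FW gap should then control how far the resulting budget multiplier can sit from $\zeta^*$.

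\emph{Step 1: the knapsack dual and $\bar\zeta_t$.} Write $s_i := [\nabla g_{\mu_l}(\vx_t)]_i$ for $i\in S_f^l$. The LP \eqref{eq:LPl} is $\max\{\vs^\intercal\vz : \sum_i z_i = N_l,\ 0\le \vz\le \bm 1\}$. Its Lagrangian dual in the budget multiplier is
\begin{align*}
\min_{\zeta}\ h_{\vs}(\zeta) := N_l\zeta + \sum_{i\in S_f^l}\max(0,s_i-\zeta).
\end{align*}
This function is convex and piecewise linear with slope $N_l-\#\{i: s_i>\zeta\}$. Its minimum is therefore attained at the $N_l$-th largest score, which gives \eqref{eq:tbar}. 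The optimal value is $h_{\vs}(\bar\zeta_t)=\vs^\intercal\vvv_t$, so
\begin{align*}
\Gamma_t = h_{\vs}(\bar\zeta_t)-\vs^\intercal\vx_t .
\end{align*}

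\emph{Step 2: embed in \eqref{eq:SDPDual}.} By \cref{lemma:gradient}, $s_i=\innp{P_{\mu_l}(\vx_t)}{\vm_i\vm_i^\intercal}$ with $P_{\mu_l}(\vx_t)\succeq 0$ and $\Tr P_{\mu_l}(\vx_t)=1$. Hence $Z=P_{\mu_l}(\vx_t)$ is feasible in \eqref{eq:SDPDual}. Exactly as in the proof of \cref{lem:dual_solution_from_primal}, eliminating $\valpha,\vbeta$ shows that the dual objective at fixed $Z$ is $h_{\vvv(Z)}(\zeta)+\innp{Z}{C_l}$, where $\vvv(Z)$ is the score vector of $Z$.

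Apply the same reduction to an optimal $Z^*$. Its multiplier $\zeta^*$ minimises $h_{\vvv(Z^*)}$, so $\zeta^*$ is the $N_l$-th largest score of $Z^*$. The claim \eqref{eq:DP} is thus a perturbation statement: the order statistic $[\vvv(\cdot)]_{(N_l)}$ must move by at most $\Gamma_t$ between $P_{\mu_l}(\vx_t)$ and $Z^*$.

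\emph{Step 3: bounding the perturbation.} I will use two facts about $h$. First, since $h_{\vs}$ has slope at least $+1$ to the right of its minimiser and at most $-1$ to the left (away from ties),
\begin{align*}
h_{\vs}(\zeta)-h_{\vs}(\bar\zeta_t)\ \ge\ |\zeta-\bar\zeta_t| .
\end{align*}
Second, I will use the sandwich
\begin{align*}
\vs^\intercal\vx_t+\innp{P}{C_l} = \innp{P}{M_l(\vx_t)}\ \le\ d^*\ \le\ h_{\vs}(\bar\zeta_t)+\innp{P}{C_l},
\end{align*}
where $P=P_{\mu_l}(\vx_t)$ and $d^*$ is the optimal dual value. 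The first inequality comes from $\innp{P}{M}\ge\lambda_{\min}(M)$ together with optimality of $Z^*$ against the primal point. The second is weak duality at the feasible point $(P,\bar\zeta_t)$.

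Subtracting the two ends shows that $(P,\bar\zeta_t)$ is $\Gamma_t$-suboptimal in \eqref{eq:SDPDual}. I then plug $(P,\zeta^*)$ into the objective. It is dual-feasible with suitable $\valpha,\vbeta$, and its value is $\ge d^*$. Combining this with the first fact gives $|\zeta^*-\bar\zeta_t|\le\Gamma_t$, provided the $Z$-dependence can be decoupled.

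\emph{Main obstacle.} The main obstacle is exactly that decoupling. $\zeta^*$ is the multiplier attached to $Z^*$, not to $P$, and the scores of $Z^*$ may differ from $\vs$. My first attempt is to use joint convexity of the dual objective in $(Z,\zeta)$ along the segment from $(P,\bar\zeta_t)$ to $(Z^*,\zeta^*)$. The unit-slope growth in $\zeta$ should combine with the $\Gamma_t$-suboptimality of the endpoint $(P,\bar\zeta_t)$. Ties among the $s_i$, where the slope of $h_{\vs}$ can be zero on an interval, must be treated separately; there I would take $\zeta^*$ to be the nearest element of the minimiser interval.
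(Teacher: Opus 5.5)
\textbf{There is a genuine gap.} The proposal does not close, and the decoupling you defer is the entire content. Three of the steps it relies on are also wrong.

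(a) The left half of your sandwich, $\innp{P}{M_l(\vx_t)}\le d^*$, is false. $\innp{P}{M_l(\vx_t)}=\sum_i w_i\lambda_i(M_l(\vx_t))$ is a soft-min average, so it lies strictly above $\lambda_{\min}(M_l(\vx_t))$ (by at most $\mu_l\log n$) unless $M_l(\vx_t)$ is a multiple of $I$. At $\vx_t=\vx^*$ it therefore exceeds $d^*$. What your computation actually gives is that the dual value at $(P,\bar\zeta_t)$ equals $\Gamma_t+\innp{P}{M_l(\vx_t)}\le d^*+\Gamma_t+\mu_l\log n$.

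(b) The growth bound $h_{\mathbf{s}}(\zeta)-h_{\mathbf{s}}(\bar\zeta_t)\ge|\zeta-\bar\zeta_t|$ fails generically. $h_{\mathbf{s}}$ is constant on $[s_{(N_l+1)},s_{(N_l)}]$, and this interval collapses to a point only at a tie. So ties are the good case, not the bad one. You only control the distance to that interval, which gives no lower bound of the form $\zeta\ge\bar\zeta_t-\Gamma_t$. That is exactly the half \cref{thm:exclusion} uses. The same flatness means $\zeta^*$ is only known to lie between the $(N_l+1)$-th and $N_l$-th largest scores of $Z^*$, not to equal the latter.

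(c) Plugging $(P,\zeta^*)$ into \eqref{eq:SDPDual} yields $h_{\mathbf{s}}(\zeta^*)+\innp{P}{C_l}\ge d^*$, which is a lower bound on $h_{\mathbf{s}}(\zeta^*)$. Turning growth into proximity needs the opposite, $h_{\mathbf{s}}(\zeta^*)\le h_{\mathbf{s}}(\bar\zeta_t)+\Gamma_t$. Joint convexity along the segment to $(Z^*,\zeta^*)$ only bounds the dual objective from above there and gives no sharpness.

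This cannot be repaired, because \eqref{eq:DP} is false with $\zeta^*$ taken from \eqref{eq:SDPDual}. Take $n=m=2$, $C=0$, $\vm_1=(1,2)$, $\vm_2=(1,-2)$, $N_l=1$. On the budget line, $M(t,1-t)$ has diagonal $(1,4)$ and off-diagonal entry $2(2t-1)$. Hence $\vx^*=(1/2,1/2)$ is the unique relaxation optimum, with $M(\vx^*)=\Diag(1,4)$. Complementary slackness forces $\valpha=\vbeta=0$ and $Z^*=\mathbf{e}_1\mathbf{e}_1^\intercal$, so $\zeta^*=1$ is unique. At $\vx_t=\vx^*$ both gradient entries equal $1+3/(1+e^{3/\mu})$. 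Thus $\Gamma_t=0$ while $\bar\zeta_t-\Gamma_t=1+3/(1+e^{3/\mu})>\zeta^*$ for every $\mu>0$; this is exactly your inequality (a) failing.

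The paper argues differently from you. It identifies $\zeta^*$ with the budget price of the smoothed problem \eqref{eq:CPl} and invokes the Frank--Wolfe dual-price sensitivity result of Braun and Pokutta with perturbation size $\delta_b=1$. That reading does not rescue \eqref{eq:DP} either. Take $\vm_1=\mathbf{e}_1$, $\vm_2=\sqrt{10}\,\mathbf{e}_2$, $C=0$, $N=1$, $\mu=1$. Both the smoothed and the SDP budget multipliers equal $10/11$, since both optima are interior. Yet at $\vx_t=(0.8,0.2)$ one gets $\bar\zeta_t-\Gamma_t=\innp{P_\mu(\vx_t)}{M(\vx_t)}\approx 1.078$. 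A correct statement needs error terms beyond $\Gamma_t$; for the SDP multiplier, at least a $\mu_l$-dependent one.
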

\begin{proof}
The problem \eqref{eq:CPl} is a concave maximization over the polytope
$\mathrm{conv}(\mathcal{X}_l)$, defined by the budget constraint
$\sum_{i \in S_f^l} x_i = N_l$ together with the bound constraints
$\vl \leq \vx \leq \vu$. Applying the result of \citet{braun2021dual} to the
equivalent convex minimization of $-g_{\mu_l}$ and translating the signs back to
the maximization convention, for any
$\vx_t \in \mathrm{conv}(\mathcal{X}_l)$ with FW gap $\Gamma_t$, the dual
price $\bar{\zeta}_t$ of the linearized LP \eqref{eq:LPl} approximates the true
dual price $\zeta^*$ of the budget constraint at the optimum of \eqref{eq:CPl}
with error at most $\Gamma_t / \delta_b$, where $\delta_b > 0$ is the budget
perturbation magnitude. Since $N_l \in \mathbb{Z}$ and perturbations smaller
than $1$ do not change the combinatorial structure of $\mathrm{conv}(\mathcal{X}_l)$,
we have $\delta_b = 1$, and \eqref{eq:DP} follows.
\end{proof}

\cref{thm:exclusion} combines eigenspace perturbation bounds with an approximate budget dual price to exclude a variable safely.
\begin{proof}[Proof of \cref{thm:exclusion}]
We first bound the difference in Frobenius norm between two matrices.
Since $M_l(\cdot)$ is linear in $\vx$ and both
$\vx_t, \vx^*_{\mathrm{rel}} \in \mathrm{conv}(\mathcal{X}_l)$ with
$\|\vx_t - \vx^*_{\mathrm{rel}}\|_2 \leq D_l = \sqrt{2N_l}$:
\begin{align}
    \|M_l(\vx_t) - M_l(\vx^*_{\mathrm{rel}})\|_F
    \leq \|M_l \|_{\mathrm{op}}\sqrt{2N_l}
    = \varepsilon_l.
    \label{eq:frob}
\end{align}

By Weyl's inequality applied between $M_l(\vx^*_{\mathrm{rel}})$
and $M_l(\vx_t)$,
\begin{align}
    |\lambda_i(M_l(\vx^*_{\mathrm{rel}})) - \lambda_i(M_l(\vx_t))|
    \leq \|M_l(\vx_t) - M_l(\vx^*_{\mathrm{rel}})\|_F
    \leq \varepsilon_l
    \quad \forall i \in [n]. \label{eq:weyl}
\end{align}
Using \eqref{eq:safetyweyl}, the minimum eigenvalue of
$M_l(\vx^*_{\mathrm{rel}})$ is also simple with eigengap at least
$\gamma_t - 2\varepsilon_l > 0$.

Applying the Davis-Kahan $\sin\theta$ theorem to the pair
$(M_l(\vx_t), M_l(\vx^*_{\mathrm{rel}}))$, using the eigengap $\gamma_t$
of the known matrix $M_l(\vx_t)$ and the perturbation bound
\eqref{eq:frob}:
\begin{align}
    \|\hat{\vw}_t\hat{\vw}_t^\intercal -
    \vw^*_{\mathrm{rel}}\vw^{*\top}_{\mathrm{rel}}\|_F
    = \sqrt{2}|\sin\theta|
    \leq \frac{\sqrt{2}\,\varepsilon_l}{\gamma_t},
    \label{eq:dk}
\end{align}
where $\vw^*_{\mathrm{rel}}$ is the exact minimum eigenvector of
$M_l(\vx^*_{\mathrm{rel}})$.

By the triangle inequality, \eqref{eq:dk} and the eigenvector computation precision $\eta$:
\begin{align}
    \|\vw_t\vw_t^\intercal - \vw^*_{\mathrm{rel}}\vw^{*\top}_{\mathrm{rel}}\|_F
    &\leq
    \|\vw_t\vw_t^\intercal - \hat{\vw}_t\hat{\vw}_t^\intercal\|_F
    + \|\hat{\vw}_t\hat{\vw}_t^\intercal -
    \vw^*_{\mathrm{rel}}\vw^{*\top}_{\mathrm{rel}}\|_F
    \leq \eta + \frac{\sqrt{2}\,\varepsilon_l}{\gamma_t}
    = \sigma_l.
    \label{eq:total_err}
\end{align}

For any $\vm_i$, using $\|P\|_2 \leq \|P\|_F$ and \eqref{eq:total_err}:
\begin{align*}
    & |(\vw_t^\intercal \vm_i)^2 - (\vw^{*\top}_{\mathrm{rel}} \vm_i)^2| \\
    =&  |\vm_i^\intercal (\vw_t\vw_t^\intercal - \vw^*_{\mathrm{rel}}\vw^{*\top}_{\mathrm{rel}}) \vm_i| \\
    \leq & \|\vm_i\|_2^2 \|\vw_t\vw_t^\intercal - \vw^*_{\mathrm{rel}}\vw^{*\top}_{\mathrm{rel}}\|_2\\
    \leq & \|\vm_i\|_2^2 \|\vw_t\vw_t^\intercal - \vw^*_{\mathrm{rel}}\vw^{*\top}_{\mathrm{rel}}\|_F\\
    \leq & \|\vm_i\|_2^2 \cdot \sigma_l,
\end{align*}
and therefore:
\begin{align}
    (\vw^{*\top}_{\mathrm{rel}} \vm_i)^2
    \leq (\vw_t^\intercal \vm_i)^2 + \|\vm_i\|_2^2 \cdot \sigma_l.
    \label{eq:IP}
\end{align}

Assuming Slater's condition holds for \eqref{eq:SDPPrimal} at
node $l$ and strong duality between \eqref{eq:SDPPrimal} and
\eqref{eq:SDPDual} is satisfied, the KKT stationarity condition of
\eqref{eq:Lagrangian} with respect to $x_i$, using
$M_i = \vm_i\vm_i^\intercal$, gives:
\begin{align}
    \innp{Z^*}{\vm_i\vm_i^\intercal} = \zeta^* - \alpha^*_i + \beta^*_i
    \quad \forall i \in S_f^l.
    \label{eq:stationarity}
\end{align}
The complementary slackness conditions for the bound constraints give
$\alpha^*_i((x^*_{\mathrm{rel}})_i - l_i) = 0$ and
$\beta^*_i(u_i - (x^*_{\mathrm{rel}})_i) = 0$.
For $i \in S_f^l$ with $(x^*_{\mathrm{rel}})_i > l_i$, we have
$\alpha^*_i = 0$, so \eqref{eq:stationarity} gives
$\innp{Z^*}{\vm_i\vm_i^\intercal} = \zeta^* + \beta^*_i \geq \zeta^*$.
Taking the contrapositive:
\begin{align}
    \innp{Z^*}{\vm_i\vm_i^\intercal} < \zeta^*
    \implies (x^*_{\mathrm{rel}})_i = l_i = 0.
    \label{eq:exclusion_dual}
\end{align}
The SDP complementary slackness condition
$\innp{Z^*}{M_l(\vx^*_{\mathrm{rel}}) - \lambda^* I} = 0$ with
$Z^* \succeq 0$ and $M_l(\vx^*_{\mathrm{rel}}) - \lambda^* I \succeq 0$
forces $Z^*$ to lie in the null space of
$M_l(\vx^*_{\mathrm{rel}}) - \lambda^* I$.
By the simplicity of the minimum eigenvalue of $M_l(\vx^*_{\mathrm{rel}})$ (from the eigengap argument),
this null space is one-dimensional and spanned by
$\vw^*_{\mathrm{rel}}$, and since $\Tr(Z^*) = 1$ and $Z^* \succeq 0$:
\begin{align}
    Z^* = \vw^*_{\mathrm{rel}}\vw^{*\top}_{\mathrm{rel}}.
    \label{eq:Zstar}
\end{align}
Substituting \eqref{eq:Zstar} into \eqref{eq:exclusion_dual}:
\begin{align}
    (\vw^{*\top}_{\mathrm{rel}}\vm_i)^2 < \zeta^*
    \implies (x^*_{\mathrm{rel}})_i = 0.
    \label{eq:EX}
\end{align}
Instead of solving the dual problem to compute the value of $t$ in an optimal solution,
we rely on the dual price estimation from the linear oracle.
By \cref{lem:dual_price}, $\zeta^* \geq \bar{\zeta}_t - \Gamma_t$.
Combining \eqref{eq:IP} with condition \eqref{eq:EC}:
\begin{align}
    (\vw^{*\top}_{\mathrm{rel}} \vm_i)^2
    \leq (\vw_t^\intercal \vm_i)^2 + \|\vm_i\|_2^2 \cdot \sigma_l
    < \bar{\zeta}_t - \Gamma_t
    \leq \zeta^*,
    \label{eq:final}
\end{align}
which satisfies the premise of \eqref{eq:EX}, giving
$(x^*_{\mathrm{rel}})_i = 0$. Fixing $x_i = 0$ at node $l$ therefore does
not remove the relaxation optimizer from the feasible set and is safe in
the branch-and-bound.
\end{proof}

\section{Additional computational results}\label{app:comp_appendix}

In this appendix, we showcase more detailed and complementary results to the main text.
In tables \cref{tab:avg_by_dimension_connected,tab:by_dimension_(correlated),tab:avg_by_dimension_disconnected,tab:spanning_tree,tab:by_dimension_(independent)},
we compare the performance of Boscia, SCIPSDP B\&B and SCIPSDP OA split up by dimension.
Tables \cref{tab:by_N_(correlated),tab:by_N_(independent)} display the effect of the magnitude of the budget $N$ for the 
E-Optimal Experiment Design instances. 
"One" denotes $N=1.5 n$ and "log" $N= 1.5n\log(n)$. 
Observe that a larger $N$ is beneficial for the performance of almost all solvers. 
This is consistent with the observation in \citet{ahipasaoglu2025column} in which the same pattern was observed for the D-Optimal Experiment Design.
\cref{tab:smoothing_ablation} shows the effect of the truncated gradient and the smoothing parameter on the performance of Boscia.
Adaptive $\mu$ uses the values specified in \cref{eq:mu_schedule_adaptive}; otherwise, the values in \cref{tab:mu_schedules} are used.

\cref{fig:ACST_independent_pruning,fig:AGC_correlated_exclusion,fig:AGC_correlated_pruning,fig:AGC_independent_pruning,fig:E_correlated_pruning,fig:E_independent_pruning,tab:smoothing_ablation} 
showcase the effect of the algorithmic improvements on the considered problems.
In \cref{fig:ACST_independent_pruning}, we see that the simplied formulation of the Algebraic Connectivity Spanning Tree problem, which 
replaces the spanning tree constraint with a knapsack constraint, does not improve performance.
Even though the probability simplex LMO is computationally cheaper, enforcing the spanning tree constraint leads to a tighter relaxation
which ultimately is more beneficial for the performance.
As expected, the exclusion/fixings criteria are quite expensive to perform, especially if the actual dual is solved (dubbed dual exclusion), see
Figure \cref{fig:AGC_correlated_exclusion}. 
The dual fixing variant solves the actual dual problem with Mosek, the others use the approximation from \cref{lem:dual_solution_from_primal}. The random variant computes a couple of possible $Z$ as convex combination of the 
eigenvectors of the minimum eigenvalue and chooses the one that achieves the smallest $\zeta$ value.
The tighter tol variant uses tighter tolerance for Frank-Wolfe to test if that positively effects the dual variable approximation.
Even computing the approximated dual variables does not positively affect runs.
Generally, the few fixings can be generated, even when the exact dual information is used. 
The effect of the pruning strategies is present in
\cref{fig:AGC_correlated_pruning,fig:AGC_independent_pruning,fig:E_correlated_pruning,fig:E_independent_pruning}.
Clearly, the eigenvalue-based pruning has a positive effect.
Note that the rank-based pruning provides weaker information than the eigenvalue-based pruning, so it is no surprise that it does not perform well.
Lastly, we compare retaining one half or one third of the spectrum in the truncated gradient, as well as scaling the smoothing parameter to the instance spectrum.
The results for EOD, AGC, and ACST/ACSTS are summarized in \cref{tab:smoothing_ablation}.
The effect of the truncated gradient is particularly strong for independent data.
This is likely due to the spread of the eigenvalues as can be seen in \cref{fig:Trajectory_Eigenvalues}.
In the independent data case (right-hand plot), the distance of the largest eigenvalue to all others is far more pronounced than for the correlated data.
\cref{fig:UpperboundsRoot} shows the upper bound comparison at the root node for EOD and AGC. 
It also shows that the SDP relaxation yields the best upper bound. 

\begin{table}[htbp]
    %{\tiny
    \centering
    \caption{Performance comparison for EOD with correlated data grouped by dimension. Geometric mean of the time is computed with a 1s shift. There are 50 instances in total.}
    \label{tab:by_dimension_(correlated)}
    \begin{tabular}{llrrrrrr}
\toprule
 Solver & Metric & 50 & 80 & 100 & 120 & 150 & all \\
\midrule
\multirow{3}{*}{Boscia} & \% sol. & \textbf{100.0} & 60.0 & \textbf{50.0} & 30.0 & \textbf{10.0} & 50.0 \\
  & time (s) & 75.7 & 368.1 & 1487.4 & 3110.7 & \textbf{3322.4} & 845.9 \\
  & rel. gap & --- & 9.77e-02 & 2.28e-01 & 1.80e-01 & 2.16e-01 & 1.83e-01 \\
\midrule
\multirow{3}{*}{SCIPSDP (OA)} & \% sol. & \textbf{100.0} & \textbf{100.0} & \textbf{50.0} & \textbf{40.0} & 0.0 & \textbf{58.0} \\
  & time (s) & \textbf{4.8} & \textbf{21.4} & \textbf{1179.4} & \textbf{901.5} & 3600.0 & \textbf{217.6} \\
  & rel. gap & --- & --- & --- & 6.45e-02 & 2.15e-01 & 1.93e-01 \\
\midrule
\multirow{3}{*}{SCIPSDP (B\&B)} & \% sol. & \textbf{100.0} & \textbf{100.0} & 40.0 & 10.0 & 0.0 & 50.0 \\
  & time (s) & 54.8 & 112.4 & 2672.3 & 3582.3 & 3600.0 & 736.5 \\
  & rel. gap & --- & --- & \textbf{5.68e-02} & \textbf{5.55e-02} & \textbf{8.57e-02} & \textbf{6.64e-02} \\
\bottomrule
\end{tabular}

    %}
\end{table}

\begin{table}[htbp]
    %{\tiny
    \centering
    \caption{Performance comparison for EOD with independent data grouped by dimension. Geometric mean of the time is computed with a 1s shift. There are 50 instances in total.}
    \label{tab:by_dimension_(independent)}
\begin{tabular}{llrrrrrr}
\toprule
 Solver & Metric & 50 & 80 & 100 & 120 & 150 & all \\
\midrule
\multirow{3}{*}{Boscia} & \% sol. & 80.0 & 20.0 & 20.0 & 0.0 & 0.0 & 24.0 \\
  & time (s) & 220.2 & 2594.1 & 3392.4 & 3600.0 & 3600.0 & 1906.6 \\
  & rel. gap & 7.29e-02 & 9.42e-02 & 1.99e-01 & 2.22e-01 & 2.71e-01 & 1.80e-01 \\
\midrule
\multirow{3}{*}{SCIPSDP (OA)} & \% sol. & \textbf{100.0} & 40.0 & 10.0 & 0.0 & 0.0 & 30.0 \\
  & time (s) & \textbf{14.0} & 2328.6 & 3267.8 & 3600.0 & 3600.0 & 1080.1 \\
  & rel. gap & --- & 7.62e-02 & 3.28e-01 & 2.99e-01 & 4.00e-01 & 2.63e-01 \\
\midrule
\multirow{3}{*}{SCIPSDP (B\&B)} & \% sol. & \textbf{100.0} & \textbf{100.0} & \textbf{40.0} & \textbf{30.0} & 0.0 & \textbf{54.0} \\
  & time (s) & 58.2 & \textbf{153.9} & \textbf{1403.3} & \textbf{2697.3} & 3600.0 & \textbf{658.8} \\
  & rel. gap & --- & --- & \textbf{1.17e-01} & \textbf{1.12e-01} & \textbf{1.46e-01} & \textbf{1.27e-01} \\
\bottomrule
\end{tabular}

    %}
\end{table}

\begin{table}[htbp]
    %{\tiny
    \centering
    \caption{Performance comparison for AGC with a connected base graph grouped by dimension. Geometric mean of the time is computed with a 1s shift. There are 20 instances in total.}
    \label{tab:avg_by_dimension_connected}
\begin{tabular}{llrrrrr}
\toprule
 Solver & Metric & 80 & 100 & 150 & 200 & all \\
\midrule
\multirow{3}{*}{Boscia} & \% sol. & \textbf{100.0} & \textbf{100.0} & \textbf{100.0} & \textbf{100.0} & \textbf{100.0} \\
  & time (s) & 37.3 & 12.0 & 1.7 & 7.5 & 9.3 \\
  & rel. gap & --- & --- & --- & --- & --- \\
\midrule
\multirow{3}{*}{SCIPSDP (OA)} & \% sol. & \textbf{100.0} & \textbf{100.0} & \textbf{100.0} & \textbf{100.0} & \textbf{100.0} \\
  & time (s) & \textbf{0.9} & \textbf{0.4} & \textbf{0.3} & \textbf{1.2} & \textbf{0.7} \\
  & rel. gap & --- & --- & --- & --- & --- \\
\midrule
\multirow{3}{*}{SCIPSDP (B\&B)} & \% sol. & \textbf{100.0} & \textbf{100.0} & \textbf{100.0} & \textbf{100.0} & \textbf{100.0} \\
  & time (s) & 50.4 & 50.2 & 50.2 & 50.3 & 50.3 \\
  & rel. gap & --- & --- & --- & --- & --- \\
\bottomrule
\end{tabular}

    %}
\end{table}

\begin{table}[htbp]
    %{\tiny
    \centering
    \caption{Performance comparison for AGC with a disconnected base graph grouped by dimension. Geometric mean of the time is computed with a 1s shift. There are 20 instances in total.}
    \label{tab:avg_by_dimension_disconnected}
\begin{tabular}{llrrrrr}
\toprule
 Solver & Metric & 80 & 100 & 150 & 200 & all \\
\midrule
\multirow{3}{*}{Boscia} & \% sol. & 20.0 & 40.0 & \textbf{20.0} & 0.0 & 20.0 \\
  & time (s) & 998.4 & 934.5 & \textbf{995.9} & 3600.0 & 1352.5 \\
  & rel. gap & 8.40e-02 & 1.15e-01 & 1.83e-01 & 1.20e-01 & 1.21e-01 \\
\midrule
\multirow{3}{*}{SCIPSDP (OA)} & \% sol. & 20.0 & 20.0 & 0.0 & 0.0 & 10.0 \\
  & time (s) & 2941.1 & 827.4 & 3600.7 & 3600.2 & 2370.2 \\
  & rel. gap & 9.34e-02 & 1.00e-01 & 1.56e-01 & 1.65e-01 & 1.28e-01 \\
\midrule
\multirow{3}{*}{SCIPSDP (B\&B)} & \% sol. & \textbf{80.0} & \textbf{80.0} & \textbf{20.0} & 0.0 & \textbf{45.0} \\
  & time (s) & \textbf{378.2} & \textbf{423.4} & 3052.6 & 3600.0 & \textbf{1152.4} \\
  & rel. gap & \textbf{2.08e-02} & \textbf{6.06e-02} & \textbf{8.24e-02} & \textbf{7.77e-02} & \textbf{6.89e-02} \\
\bottomrule
\end{tabular}

    %}
\end{table}

\begin{table}[htbp]
    %{\tiny
    \centering
    \caption{Performance comparison for ACST problem grouped by dimension. Geometric mean of the time is computed with a 1s shift. There are 12 instances in total.}
    \label{tab:spanning_tree}
\begin{tabular}{llrrrrr}
\toprule
 Solver & Metric & 45 & 66 & 105 & 300 & all \\
\midrule
\multirow{3}{*}{Boscia} & \% sol. & \textbf{100.0} & 0.0 & 0.0 & 0.0 & \textbf{14.3} \\
  & time (s) & \textbf{124.0} & 3600.0 & 3600.4 & 3602.7 & \textbf{2227.5} \\
  & rel. gap & --- & 8.14e-01 & \textbf{2.58e+00} & \textbf{5.42e+00} & \textbf{8.46e+00} \\
\midrule
\multirow{3}{*}{SCIPSDP (OA)} & \% sol. & 0.0 & 0.0 & 0.0 & 0.0 & 0.0 \\
  & time (s) & 3600.0 & 3600.0 & 3600.0 & 3600.0 & 3600.2 \\
  & rel. gap & --- & --- & 5.53e+00 & 1.00e+20 & 7.47e+15 \\
\midrule
\multirow{3}{*}{SCIPSDP (B\&B)} & \% sol. & 0.0 & 0.0 & 0.0 & 0.0 & 0.0 \\
  & time (s) & 3600.0 & 3600.0 & 3600.0 & 3600.0 & 3600.0 \\
  & rel. gap & 1.08e-01 & \textbf{7.24e-01} & 3.13e+00 & 2.09e+01 & 2.23e+04 \\
\bottomrule
\end{tabular}

    %}
\end{table}

\begin{table}[htbp]
    %{\tiny
    \centering
    \caption{Performance comparison for EOD with correlated data grouped by the budget construct. Geometric mean of the time is computed with a 1s shift. There are 50 instances in total. "One" denotes $N=1.5n$, the label "log" denotes $N=1.5n\log(n)$.}
    \label{tab:by_N_(correlated)}
\begin{tabular}{llrr}
\toprule
 Solver & Metric & one & log \\
\midrule
\multirow{3}{*}{Boscia} & \% sol. & 12.0 & 36.0 \\
  & time (s) & 2816.7 & 1290.5 \\
  & rel. gap & 2.74e-01 & 1.01e-01 \\
\midrule
\multirow{3}{*}{SCIPSDP (OA)} & \% sol. & 32.0 & 28.0 \\
  & time (s) & 1374.8 & 848.5 \\
  & rel. gap & 5.43e-01 & 1.33e-01 \\
\midrule
\multirow{3}{*}{SCIPSDP (B\&B)} & \% sol. & \textbf{40.0} & \textbf{68.0} \\
  & time (s) & \textbf{982.8} & \textbf{441.5} \\
  & rel. gap & \textbf{1.78e-01} & \textbf{6.73e-02} \\
\bottomrule
\end{tabular}

    %}
\end{table}

\begin{table}[htbp]
    %{\tiny
    \centering
    \caption{Performance comparison for EOD with independent data grouped by the budget construct. Geometric mean of the time is computed with a 1s shift. There are 50 instances in total. "One" denotes $N=1.5n$, the label "log" denotes $N=1.5n\log(n)$.}
    \label{tab:by_N_(independent)}
    \begin{tabular}{llrr}
\toprule
 Solver & Metric & one & log \\
\midrule
\multirow{3}{*}{Boscia} & \% sol. & 24.0 & \textbf{76.0} \\
  & time (s) & 2122.5 & 336.8 \\
  & rel. gap & 2.49e-01 & 6.84e-02 \\
\midrule
\multirow{3}{*}{SCIPSDP (OA)} & \% sol. & \textbf{40.0} & \textbf{76.0} \\
  & time (s) & \textbf{692.6} & \textbf{67.9} \\
  & rel. gap & 6.45e-01 & 7.05e-02 \\
\midrule
\multirow{3}{*}{SCIPSDP (B\&B)} & \% sol. & \textbf{40.0} & 60.0 \\
  & time (s) & 859.8 & 630.8 \\
  & rel. gap & \textbf{1.27e-01} & \textbf{2.50e-02} \\
\bottomrule
\end{tabular}

    %}
\end{table} 

\begin{table}[htbp]
    \centering
    \resizebox{\textwidth}{!}{\begin{tabular}{llrrrrrr}
\toprule
 Setting & Metric & EOD corr. & EOD ind. & AGC corr. & AGC ind. & ACST & ACSTS \\
\midrule
\multirow{3}{*}{Baseline} & \% sol. & 48.0 & 0.0 & \textbf{100.0} & 10.0 & \textbf{14.3} & \textbf{4.8} \\
 & time (s) & 2266.8 & 3600.0 & \textbf{40.5} & 3241.2 & 3223.6 & \textbf{3487.1} \\
 & rel. gap & 2.14e-01 & 6.85e-01 & --- & 2.09e-01 & 1.54e+01 & 5.56e+14 \\
\midrule
\multirow{3}{*}{Adaptive $\mu$} & \% sol. & \textbf{54.0} & \textbf{28.0} & 70.0 & \textbf{15.0} & 0.0 & 0.0 \\
 & time (s) & \textbf{2161.1} & \textbf{2895.6} & 1105.3 & 3212.1 & 3600.0 & 3600.0 \\
 & rel. gap & 2.51e-01 & \textbf{2.48e-01} & 5.96e-02 & 1.26e-01 & 9.91e+00 & 3.96e+14 \\
\midrule
\multirow{3}{*}{Half spectrum} & \% sol. & 50.0 & 10.0 & 90.0 & 10.0 & \textbf{14.3} & \textbf{4.8} \\
 & time (s) & 2178.9 & 3491.9 & 769.8 & 3241.1 & \textbf{3186.8} & 3580.0 \\
 & rel. gap & \textbf{2.13e-01} & 4.57e-01 & 6.13e-02 & 2.27e-01 & 2.27e+01 & 2.63e+15 \\
\midrule
\multirow{3}{*}{Third spectrum} & \% sol. & 8.0 & 0.0 & 70.0 & 10.0 & 4.8 & 0.0 \\
 & time (s) & 3387.2 & 3600.0 & 1340.9 & 3331.0 & 3596.5 & 3600.0 \\
 & rel. gap & 5.09e-01 & 1.20e+00 & \textbf{5.91e-02} & 3.11e-01 & 2.31e+01 & \textbf{1.42e+00} \\
\midrule
\multirow{3}{*}{Half spectrum + adaptive $\mu$} & \% sol. & 42.0 & 10.0 & 70.0 & \textbf{15.0} & 0.0 & 0.0 \\
 & time (s) & 2317.2 & 3486.6 & 1097.8 & \textbf{3080.0} & 3600.0 & 3600.0 \\
 & rel. gap & 2.16e-01 & 3.19e-01 & 6.07e-02 & \textbf{1.25e-01} & 8.26e+00 & 3.76e+14 \\
\midrule
\multirow{3}{*}{Third spectrum + adaptive $\mu$} & \% sol. & 4.0 & 0.0 & 70.0 & 10.0 & 0.0 & 0.0 \\
 & time (s) & 3467.7 & 3600.0 & 1260.0 & 3240.5 & 3600.0 & 3600.0 \\
 & rel. gap & 5.05e-01 & 1.19e+00 & 8.60e-02 & 1.26e-01 & \textbf{7.21e+00} & 2.99e+00 \\
\bottomrule
\end{tabular}
}
    \caption{Ablation of adaptive smoothing and one-half/one-third truncated gradients.
    Time is the arithmetic mean over the full instance set, with missing and timed-out runs set to the time limit.
    The relative dual gap is averaged over unsolved runs with a finite reported gap.}
    \label{tab:smoothing_ablation}
\end{table}

\begin{figure}
    \centering
    \begin{subfigure}{0.49\textwidth}
        \centering
        \includegraphics[width=\textwidth]{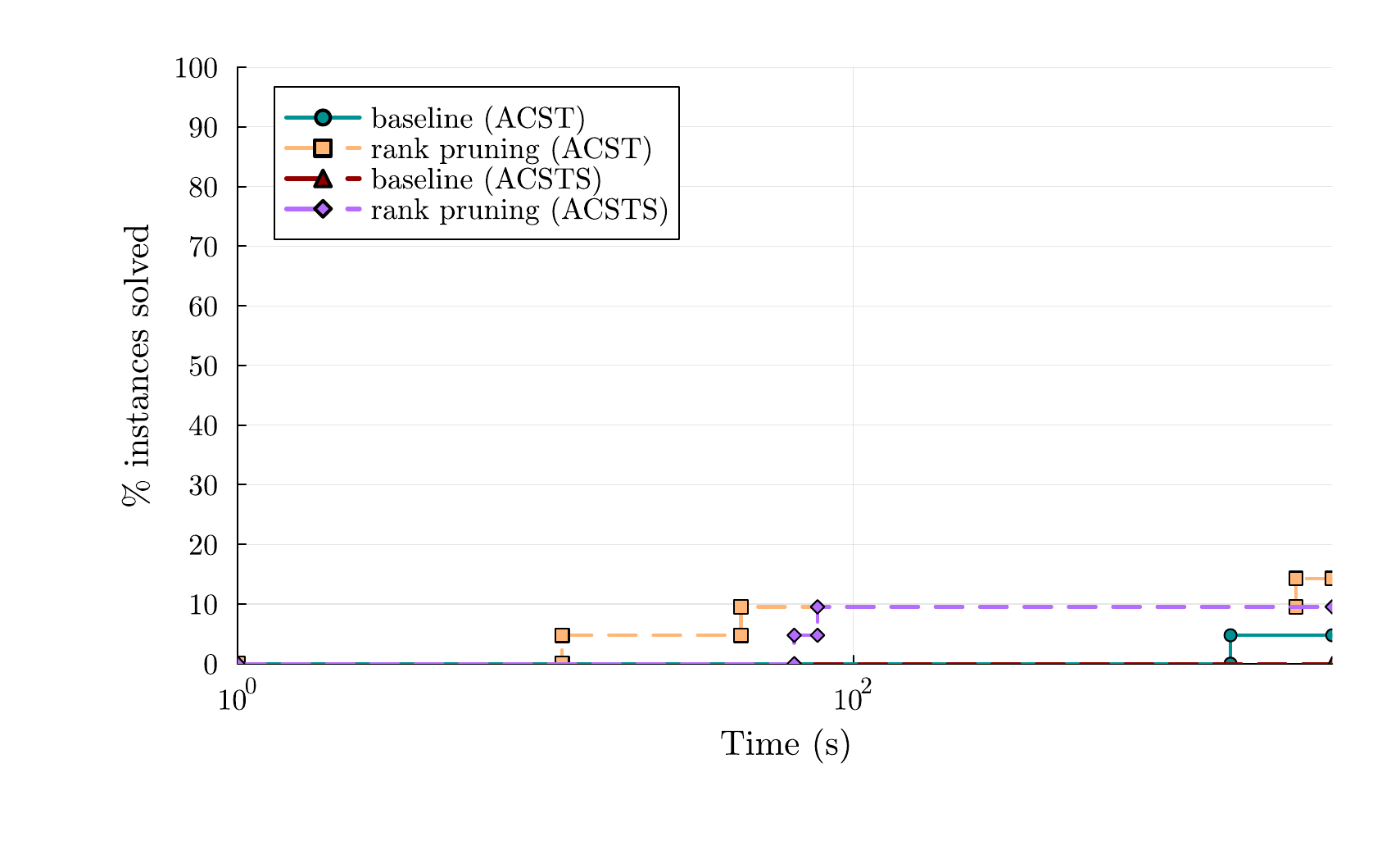}
        \caption{Percentage of instances solved over time for ACST. ACSTS denotes the knapsack formulation.}
        \label{fig:ACST_independent_pruning}
    \end{subfigure}
    \hfill
    \begin{subfigure}{0.49\textwidth}
        \centering
        \includegraphics[width=\textwidth]{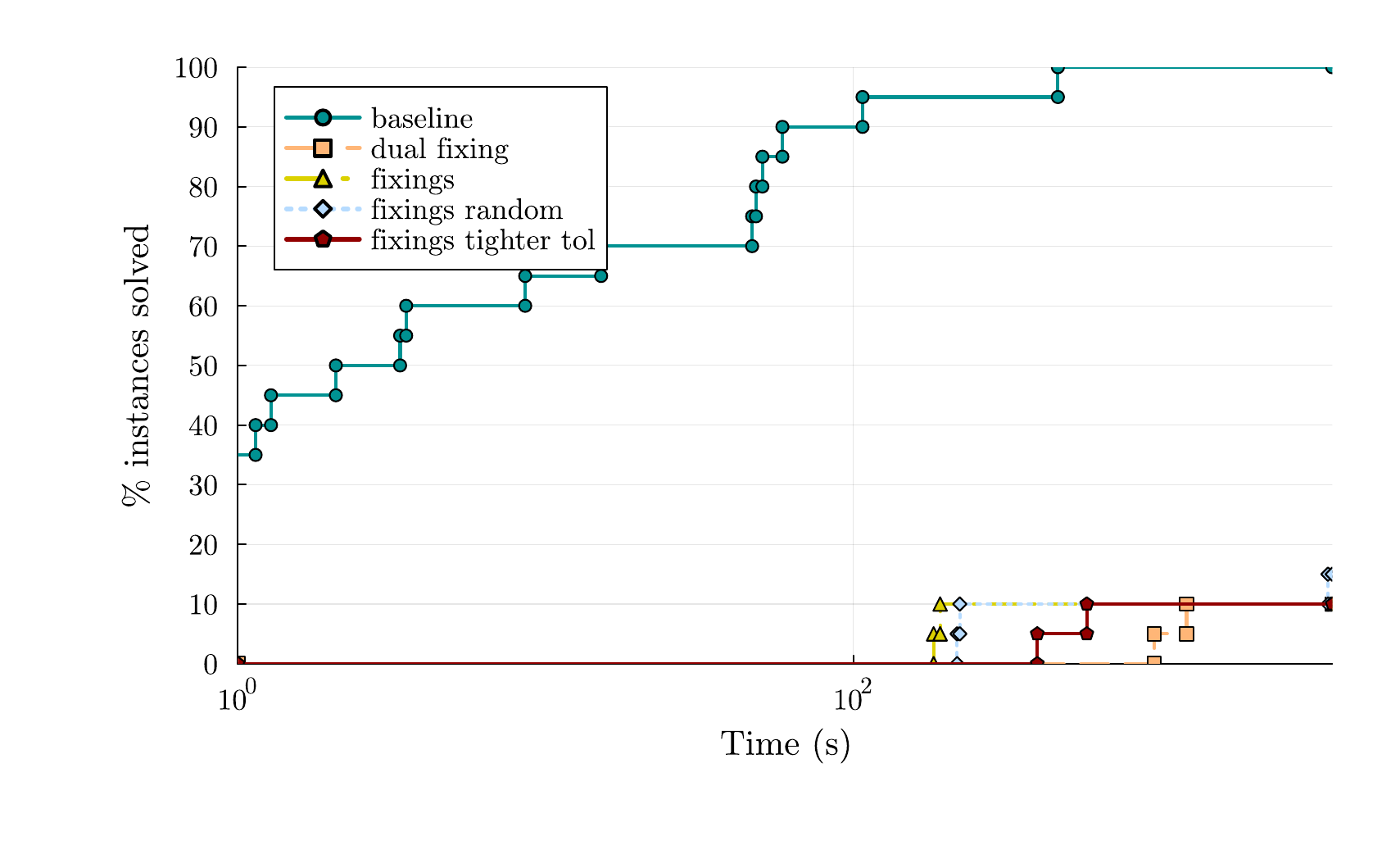}
        \caption{Percentage of instances solved over time for the different fixing techniques compared to the baseline for AGC with correlated data.}
        \label{fig:AGC_correlated_exclusion}
    \end{subfigure}
    \caption{Comparing the pruning strategies for ACST and the fixing techniques for AGC.}
\end{figure}

\begin{figure}
    \centering
    \begin{subfigure}{0.49\textwidth}
        \centering
        \includegraphics[width=\textwidth]{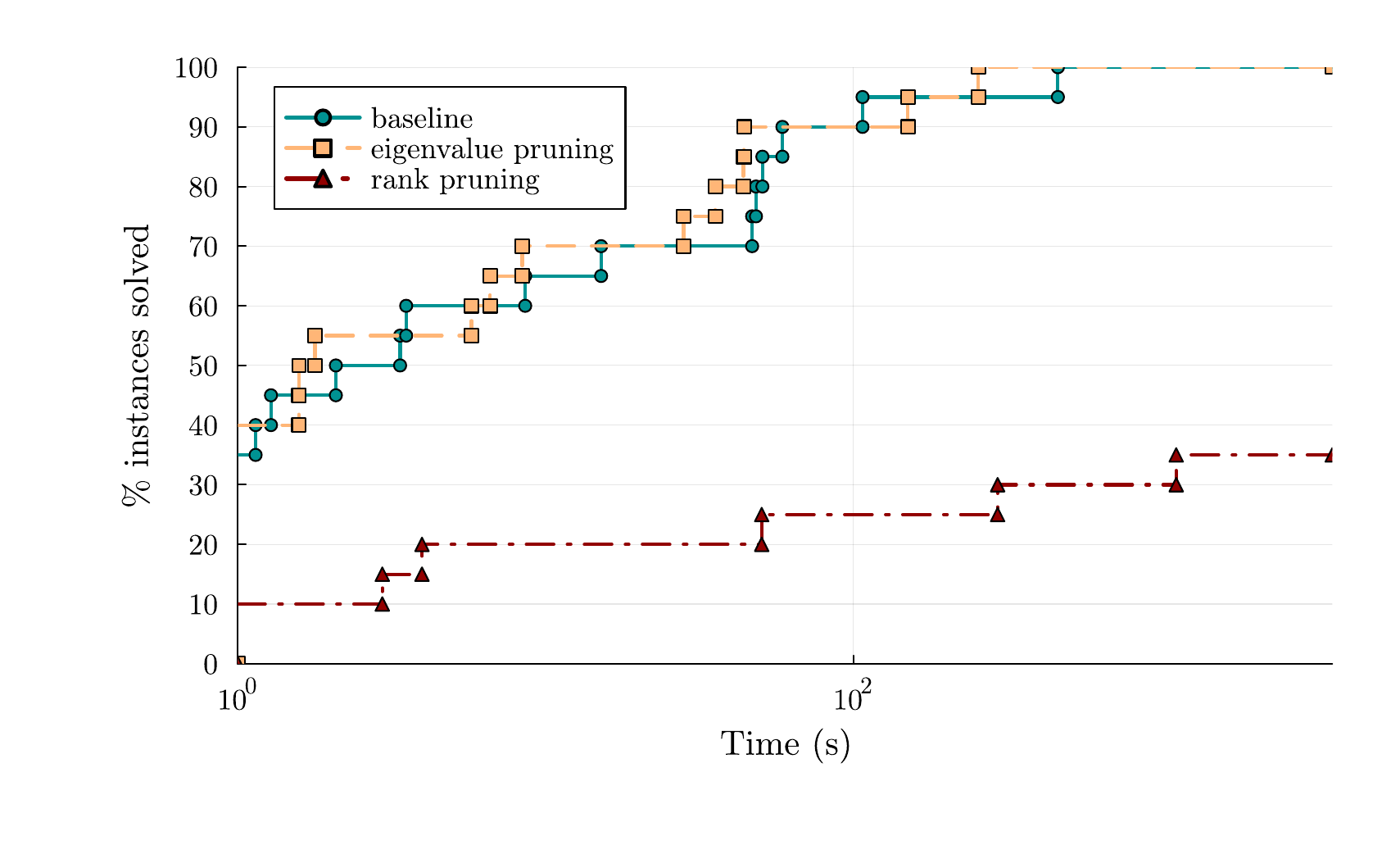}
        \caption{Percentage of instances solved over time for the different pruning techniques compared to the baseline for AGC with correlated data.}
        \label{fig:AGC_correlated_pruning}
    \end{subfigure}
    \hfill
    \begin{subfigure}{0.49\textwidth}
        \centering
        \includegraphics[width=\textwidth]{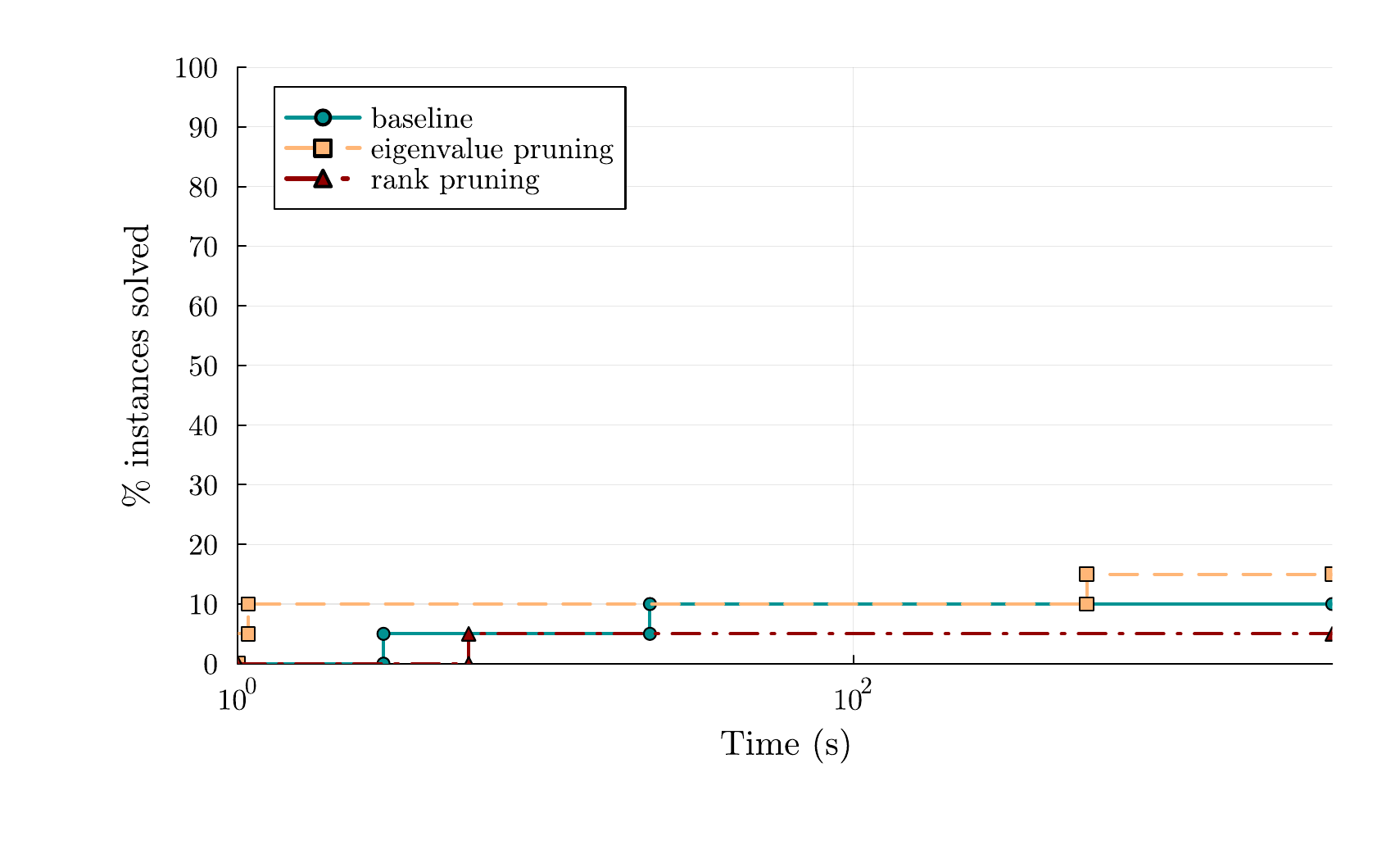}
        \caption{Percentage of instances solved over time for the different pruning techniques compared to the baseline for AGC with independent data.}
        \label{fig:AGC_independent_pruning}
    \end{subfigure}
    \caption{Comparing the pruning strategies for AGC}
\end{figure}

\begin{figure}
    \centering
    \begin{subfigure}{0.49\textwidth}
        \centering
        \includegraphics[width=\textwidth]{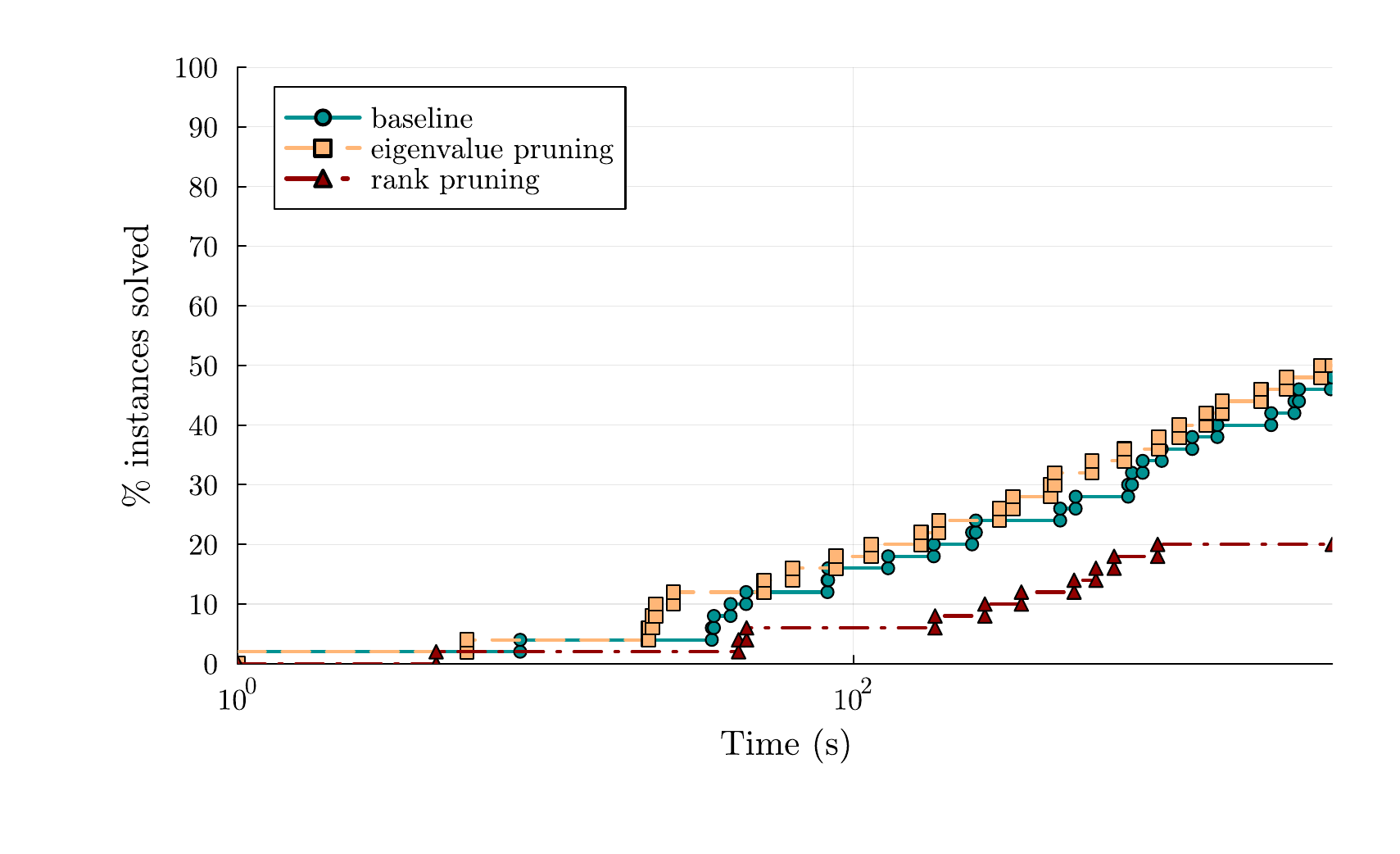}
        \caption{Percentage of instances solved over time for the different pruning techniques compared to the baseline for EOD with correlated data.}
        \label{fig:E_correlated_pruning}
    \end{subfigure}
    \hfill
    \begin{subfigure}{0.49\textwidth}
        \centering
        \includegraphics[width=\textwidth]{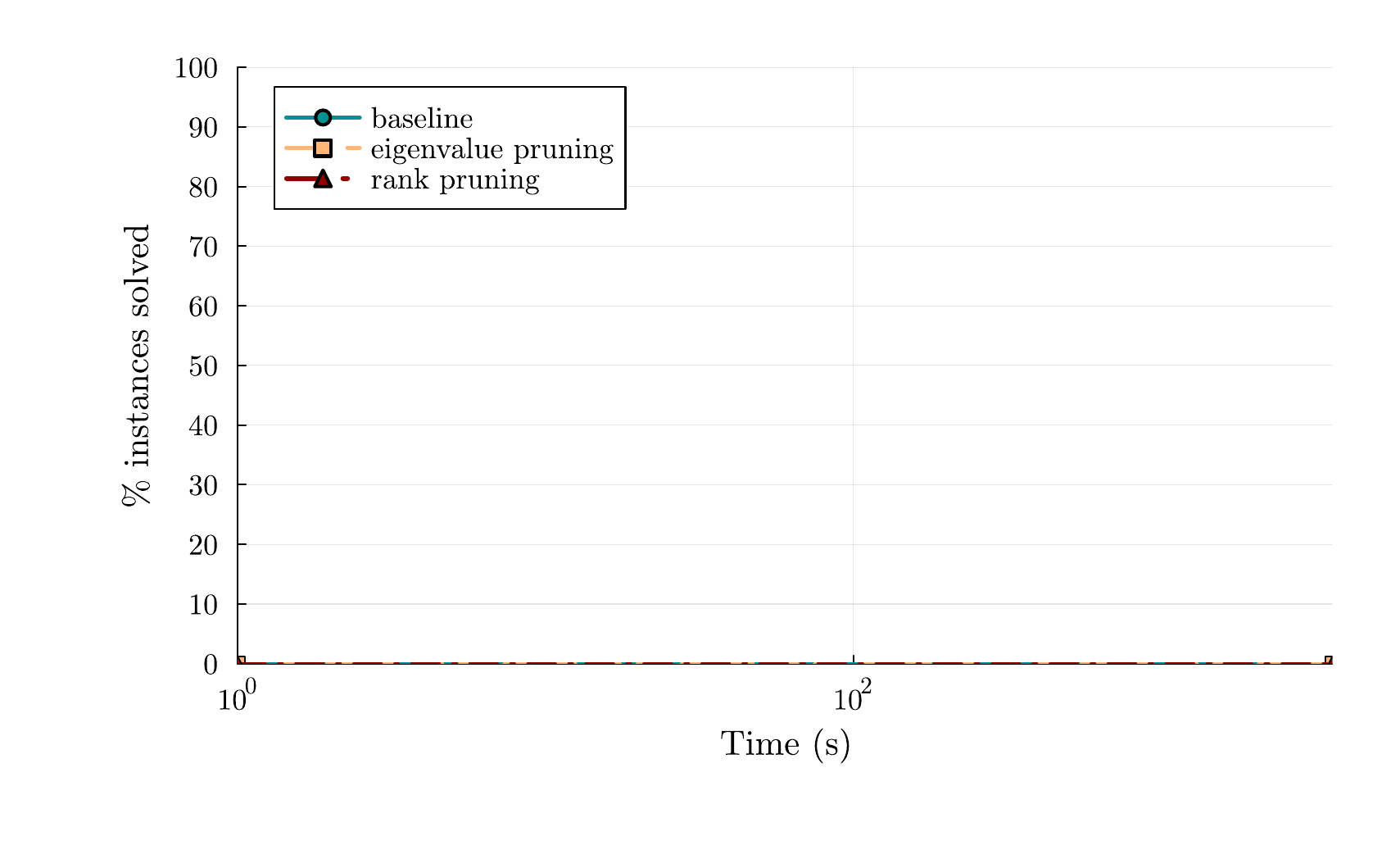}
        \caption{Percentage of instances solved over time for the different pruning techniques compared to the baseline for EOD with independent data.}
        \label{fig:E_independent_pruning}
    \end{subfigure}
    \caption{Comparing the pruning strategies for EOD}
\end{figure}

\begin{figure}
    \centering
    \includegraphics[width=1.0\textwidth]{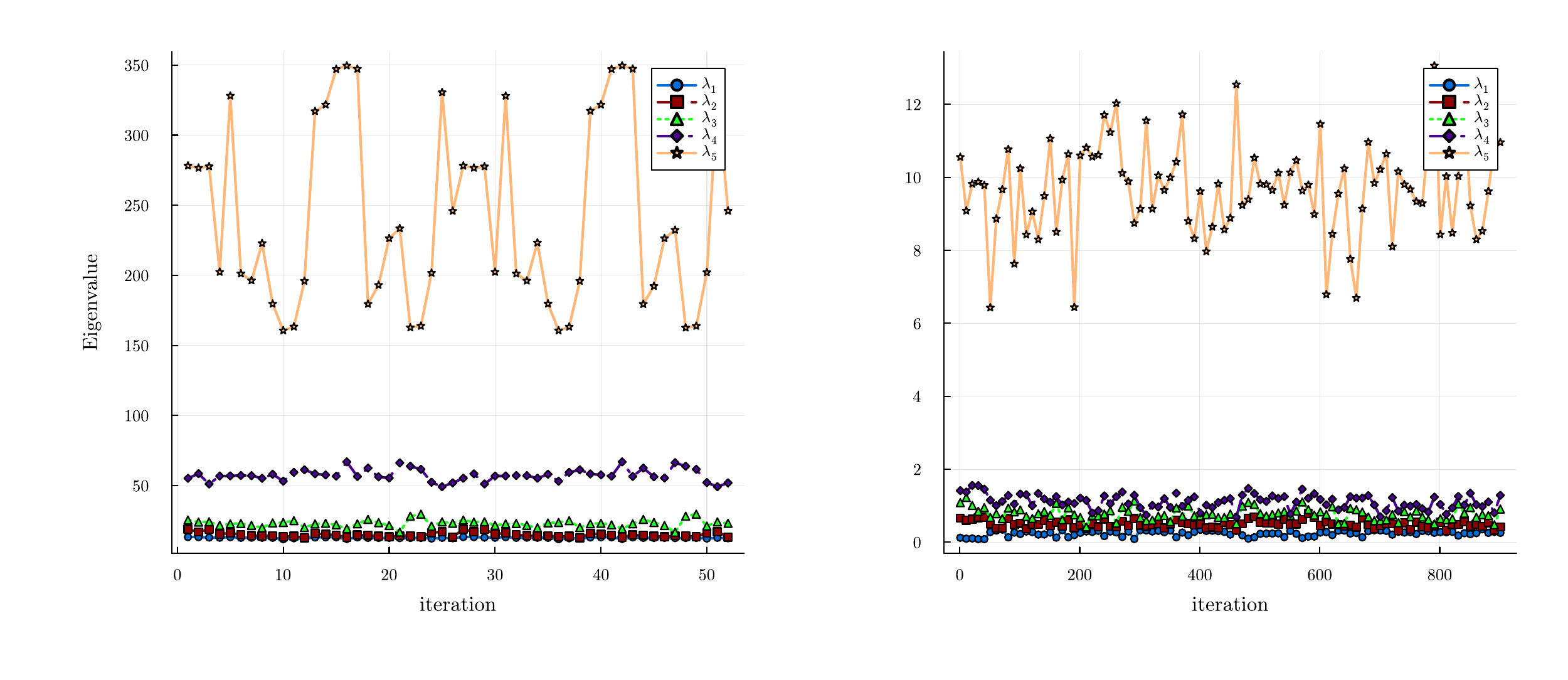}
    \caption{The trajectories of the eigenvalues of the information matrix for an instance of the E-Optimal Design problem with 30 variables.}
    \label{fig:Trajectory_Eigenvalues}
\end{figure}

\begin{figure}
    \centering
    \includegraphics[width=1.0\textwidth]{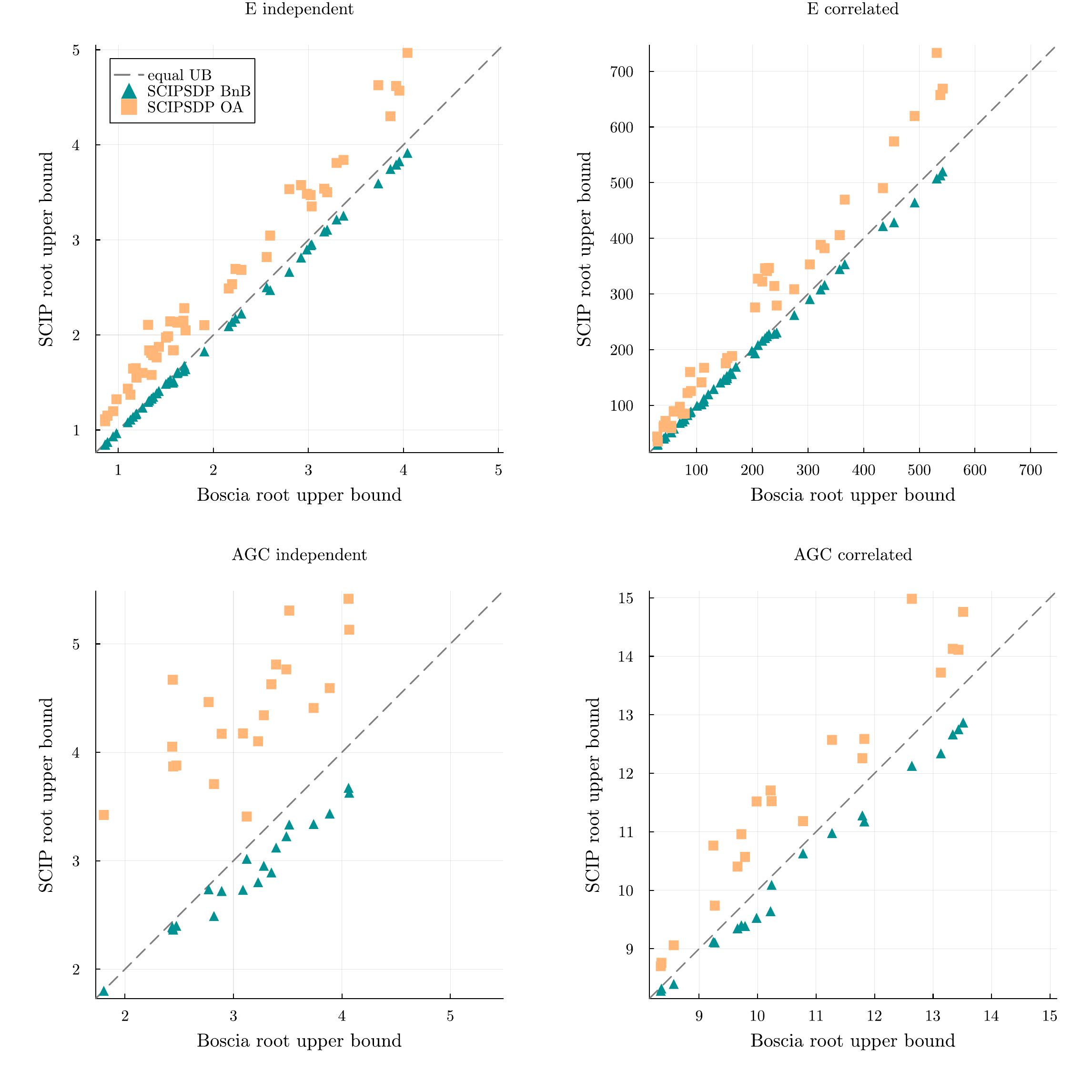}
    \caption{Upper bound comparison at the root node for EOD and AGC. A point below the line indicates an upper bound better than Boscia's.}
    \label{fig:UpperboundsRoot}
\end{figure}

\end{document}